\newtheorem{theorem}{Theorem}[section]
\newtheorem{lemma}[theorem]{Lemma}
\theoremstyle{definition}
\newtheorem{definition}[theorem]{Definition}
\newtheorem{example}[theorem]{Example}
\newtheorem{corollary}[theorem]{Corollary}
\newtheorem{proposition}[theorem]{Proposition}
\theoremstyle{remark}
\newtheorem{remark}[theorem]{Remark}
\numberwithin{equation}{section}
\newcommand{\abs}[1]{\lvert#1\rvert}
\newcommand{\tensorProd}  {\otimes}
\DeclarePairedDelimiterX\dualPair[2]{\langle}{\rangle}{#1,#2} 
\newcommand{\dif}{\mathop{}\!\mathrm{d}}    
\newcommand{\difStrat}{\mathop{}\!\circ \! \: \mathrm{d}}    
\newcommand{\isomorph}{\simeq}
{}
\NewDocumentCommand{\setSymbol}{o}{
  \nonscript \, : 
  \allowbreak
  \nonscript \,
  \mathopen{ }
}
\DeclarePairedDelimiterX\set[1]{\{}{\}}{        
  
  #1
}
\DeclarePairedDelimiter\equivClass{[}{]}				
\DeclareMathOperator{\id}{id}
\DeclareMathOperator{\Hom}{Hom}
\DeclareMathOperator{\tr}{tr}
\DeclareMathOperator{\End}{End}
\DeclareMathOperator{\GL}{GL}
\newcommand{\TBundle}{\mathrm{T}}
\newcommand{\CotBundle}{\mathrm{T}^*}
\newcommand{\SOGroup}{\mathrm{SO}}
\newcommand{\OGroup}{\mathrm{O}}
\newcommand{\UGroup}{\mathrm{U}}
\newcommand{\UAlgebra}{\operatorname{\LieA{u}}}
\newcommand{\SUGroup}{\mathrm{SU}}
\newcommand{\SpGroup}{\mathrm{Sp}}
\newcommand{\SpAlgebra}{\operatorname{\LieA{sp}}}
\newcommand{\R}{\mathbb{R}}
\newcommand{\LieA}[1]{\mathfrak{#1}}
\newcommand{\T}{\mathsf{T}}								
\NewDocumentCommand{\smallMatrix}{m}{\ensuremath{\left(\begin{smallmatrix}#1\end{smallmatrix}\right)}}
\newcommand{\intersect}{\cap}
\NewDocumentCommand{\field}{m}{\mathbb{#1}}			
\DeclareDocumentCommand{\C}{}{\field{C}}			
\NewDocumentCommand{\Z}{}{\field{Z}}				
\DeclareDocumentCommand \I { } { 			
	\mathrm{i}
}
\DeclareMathOperator{\AdAction}{Ad}
\DeclareMathOperator{\Expect}{\mathbb{E}}
\DeclarePairedDelimiterX\scalarProd[2]{\langle}{\rangle}{#1,#2}	
\begin{document}

\vspace*{-4em}
\MakeTitle
\tableofcontents

\section{Introduction}

Integration over Lie groups plays a central role in many areas of mathematics and theoretical physics.
It lies at the core of random matrix theory and has become an important tool to describe a wide range of physical systems including lattice gauge theory \parencite{Weingarten1978}, quantum chaotic systems \parencite{CotlerHunterJonesLiuEtAl2017}, many-body quantum systems \parencite{GuhrMuellerGroelingWeidenmueller1998}, quantum information theory \parencite{CollinsNechita2016} and matrix models for quantum gravity and Yang--Mills theory in two dimensions \parencite{DiFrancescoGinspargZinnJustin1995,Xu1997}.
In this paper, we develop a general framework to calculate expectation values of polynomials of group elements and their inverses on a compact Lie group \( G \) of the form
\begin{equation}
	\label{eq:intro:polynom}
	\int_G \tr_\rho (c_1 g^{\pm 1} \dotsm c_n g^{\pm 1}) \, \nu(g) \dif g, \qquad c_i \in G,
\end{equation}
where \( \dif g \) denotes the normalized Haar measure, \( \nu \) is a probability density and the trace is taken in a given representation \( \rho \) of \( G \).
Expanding the integrand, the problem reduces to a computation of the expectation value of the so-called moments
\begin{equation}
	\label{eq:intro:moments}
	\int_G g^{}_{i^{}_1 i'_1} \dotsm g^{}_{i^{}_p i'_p} \, g^{-1}_{j^{}_1 j'_1} \dotsm g^{-1}_{j^{}_q j'_q} \, \nu(g) \dif g ,
\end{equation}
where \( g_{ij} = \rho(g)_{ij} \) are the matrix entries of \( g \in G \) in the representation \( \rho \).

Given the numerous applications, these integrals are well-studied in the literature.
For matrices drawn randomly from the Haar distribution (\( \nu = 1 \)), the calculation of the moments has been initiated by theoretical physicist \textcite{Weingarten1978} motivated by problems in lattice gauge theory.
\Textcite{Collins2002} developed a rigorous mathematical framework for computing moments for the unitary group, which has been extended to the orthogonal and symplectic group by \textcite{CollinsSniady2006}.
In the unitary case, the developed Weingarten calculus expresses the integral~\eqref{eq:intro:moments} as a sum over so-called Weingarten functions, which are functions defined on the symmetric group.
This approach makes heavy use of representation theory in the form of Schur--Weyl duality.
Recently, the Weingarten calculus has been rephrased in terms of Jucys--Murphy elements~\parencite{Novak2010,ZinnJustin2010,MatsumotoNovak2013}.
A special but important case is the computation of joint moments of traces of powers of group elements, that is, essentially integrals of the form~\eqref{eq:intro:polynom} with all coefficients \( c_i \) set to the identity element.
For the unitary group, this has been extensively studied in \parencite{DiaconisShahshahani1994,DiaconisEvans2001} where it was used to obtain central limit theorems of eigenvalue distributions; see also \parencite{PasturVasilchuk2004,HughesRudnick2003} for analogous results for other groups.

Lattice gauge theories are a natural area where integrals over Lie groups play a central role.
They were originally introduced by Wilson as discrete approximations to quantum Yang--Mills theory and evolved to become one of the most promising approaches to study non-perturbative effects in QCD such as quark confinement.
The most important gauge-invariant observables in lattice theory are Wilson loops (traced holonomies) whose long range decay serves as an indicator for the confining behavior.
In the Euclidean formulation of the theory, the expectation value of a Wilson loop is an integral of the form~\eqref{eq:intro:polynom} with the probability density \( \nu \) being a Boltzmann weight relative to the Yang--Mills action.
However, explicitly computing expectation values of and correlations between Wilson loops is a difficult, if not impossible, challenge.
One thus usually resorts to numerical methods such as classical Monte Carlo simulation to approximate such integrals.

Indeed, an analytical understanding of the Wilson loop expectation values in the continuum and infinite volume limit is an essential ingredient to solve the Yang--Mills mass gap problem, which is one of the seven Millennium Problems posed by the Clay Mathematics Institute.
\Textcite{Hooft1974} realized that, when the rank \( N \) of the gauge group tends to infinity, the theory simplifies in many ways and can be solved analytically in certain cases.
In particular, the Wilson loops then satisfy the Makeenko--Migdal equations and factorize, \ie, the expectation value of a product of Wilson loops equals the product of the expectation values of the individual Wilson loops.
Recently, \textcite{Chatterjee2019} established in the large \( N \)-limit of \( \SOGroup(N) \) lattice gauge theory an asymptotic formula for expectation values of products of Wilson loops in terms of a weighted sum of certain surfaces.
These surfaces are defined starting from the collection of loops using the four operations of merging, splitting, deformation and twisting.
The proof proceeds by a complicated and lengthy calculation which hinges on Stein's method for random matrices.
Analogous results have been obtained for \( \SUGroup(N) \) in \parencite{Jafarov2016} using similar methods.
This development sparked renewed interest, leading to further progress for large-\( N \) gauge theories \parencite{ChatterjeeJafarov2016,BasuGanguly2018,Chatterjee2021}.

Another approach to a rigorous definition of a quantum Yang--Mills theory is the construction of the Yang--Mills measure and thus of the path integral using Brownian motion on the structure group.
This direction has been pioneered by \textcite{Driver1989,GrossKingSengupta1989} in two dimensions.
We refer the reader to \parencite{Sengupta2008} for a relative recent review of two-dimensional Yang--Mills theory.
In the physics literature, the Yang--Mills measure is taken to be the Lebesgue measure on the space of connections, weighted by a Boltzmann density involving the Yang--Mills action.
To make sense of this formal description, one usually uses the holonomy mapping to define the Yang--Mills measure in terms of group-valued random variables indexed by embedded loops whose distribution is given by the heat density (at a ``time'' proportional to the area enclosed by the loop).
For this reason, the calculation of expectation values of polynomials on Lie groups with respect to a Brownian motion attracted a lot of attention, especially in the large-\( N \) limit.
In particular, combinatorial integration formulas for the expectation values of polynomials under the heat kernel measure have been obtained by \textcite{Xu1997,Levy2008} and recently generalized by \textcite{Dahlqvist2017} to also allow polynomials in inverses of group elements.

The study in the aforementioned papers rely on heavy machinery from representation theory in the form of Schur--Weyl duality or Jucys--Murphy elements, or on a detailed probabilistic analysis using for example Stein's method.
Due to this complexity, the results have usually been obtained first for the unitary group, and then generalized in subsequent papers to other groups such as the orthogonal or symplectic group.
Moreover, the methods have been tailored to the specific probability measure under study which made it hard to transfer progress from one scheme to another.
In contrast, we here deduce and extend the main results of these papers from an elementary integration by parts formula.
This allows us to generalize these results to arbitrary compact Lie groups and to analyze the Haar, Wilson and heat kernel cases simultaneously and on equal footing.

Our first main result is \cref{prop:theoremA} which describes the expectation value of a product of Wilson loop observables in terms of other Wilson loops that are obtained from the initial family through two operations that we call twisting and merging\footnotemark{}.
\footnotetext{For the unitary group, these operations correspond to the Fission and Fusion processes of \parencite[Section~III]{Samuel1980}, respectively.}
This is a generalization of the results of \textcite{Chatterjee2019,Jafarov2016} to arbitrary compact Lie groups, arbitrary probability measures and arbitrary group representations.
Our construction shows that the operations of twisting and merging are determined by an operator in the universal enveloping algebra of the Lie algebra that can be seen as an operator-theoretic counterpart to the so-called completeness relations.
Moreover, these two operations can be represented in a diagrammatic way that resembles the Feynman path integrals rules.
This diagrammatic calculus is similar but different to the one developed by \textcite{BrouwerBeenakker1996} for the unitary group, \cf also \parencite{Cvitanovic1976}.
For the Haar measure and for the Brownian motion, the resulting equations for the expectation of a product of Wilson loops lead to a recursive formula that can be solved using a straightforward algorithm.
In the case of the Haar integral over the unitary group, we recover the recursion relations given in \parencite[Section~III]{Samuel1980}. 
For the Yang--Mills Wilson action, the equation takes a relative simple form which does not involve merging of Wilson loops with plaquette operators as in \parencite{Chatterjee2019} and which has the additional benefit to reduce the operations on the family of loops needed from four to two.
Moreover, in the case of the unitary group, the structure of the equation is particularly well-suited to a large-\( N \) limit.
As applications of our general framework, the result for the groups \( \SOGroup(N) \), \( \SpGroup(N) \), \( \UGroup(N) \) , and \( G_2 \) are discussed in more detail in \cref{sec:examples}.

In the second part of the paper, we investigate the moment integrals~\eqref{eq:intro:moments} for an arbitrary compact Lie group.
\Cref{prop:matrixCoefficients:general} shows that the moments satisfy an eigenvalue equation whose particular form depends on the probability measure.
For the Haar measure, the moments yield a projection onto the subspace of invariants.
Specialized to the unitary group, this result is a restatement of the well-known fact that the moments yield a conditional expectation onto the group algebra of the symmetric group \parencite[Proposition~2.2]{CollinsSniady2006}.
Moreover, \cref{prop:matrixCoefficients:tensorRepSchur} yields an explicit expansion of the moments as a sum over a spanning set of invariants.
In particular, we define a Weingarten map for every compact Lie group (depending on the group representation and on the spanning set of invariants) and show that it gives the coefficients in this expansion of the moments.
This is similar in sprit to the definition of the Weingarten map as a pseudoinverse in \parencite{ZinnJustin2010} and equivalent to the results of \parencite{Collins2002,CollinsSniady2006} for the unitary, orthogonal and symplectic group.
As a novel application, we determine in \cref{sec:examples:g2group} the Weingarten map, and thus the moments, for the exceptional group \( G_2 \) in its natural \( 7 \)-dimensional irreducible representation.
In the case of Brownian motion, the moments can be calculated using the eigenvalues of the Casimir operator and converge for large times to the moments with respect to the Haar measure, see \cref{prop:matrixCoefficients:brownian}.
This is a refinement and extension of the results of \textcite{Levy2008,Dahlqvist2017}, where only the groups \( \UGroup(N) \), \( \OGroup(N) \), and \( \SpGroup(N) \) were considered.

As we have mentioned above, at the heart of our approach lies a simple integration by parts formula.
This is perhaps most similar to the derivation of the Schwinger--Dyson equation for the Gaussian unitary ensemble, see, \eg, \parencite[Equation~(5.4.15)]{AndersonGuionnetZeitouni2010}.
To illustrate how integration by parts can be used to calculate moments, consider the simple example of \( T_{ijkl} = \int_G g_{ij} g^{-1}_{lk} \, \dif g \).
After inserting the Laplacian in the first factor, integration by parts yields
\begin{equation}
	\int_G (\Delta g_{ij}) g^{-1}_{lk} \, \dif g = - \int_G \dualPair{\dif g_{ij}}{\dif g^{-1}_{lk}} \, \dif g .
\end{equation}
The Schur-Weyl lemma implies that \( g_{ij} \) is an eigenvector of the Laplacian with, say, eigenvalue \( \lambda \).
Thus, the left-hand side equals \( \lambda T_{ijkl} \).
On the other hand, the right-hand side can be calculated by using an orthonormal basis \( \xi^a \) for the Lie algebra \( \LieA{g} \) and introducing the operator \( K_{ijkl} = \xi^a_{ij} \xi^a_{kl} \) (implicitly summing over \( a \)).
In summary, we obtain
\begin{equation}
	\lambda T_{ijkl} = K_{rjls} T_{irks}.
\end{equation}
For the fundamental representation of \( G = \UGroup(N) \), the completeness relation implies \( K_{rjls} = - \delta_{rs} \delta_{jl} \) and thus 
\begin{equation}
	\lambda T_{ijkl} = - \delta_{jl} T_{irkr} = - \delta_{jl}\delta_{ik},
\end{equation}
where the second equality follows from the definition of \( T_{ijkl} \).
The calculation of matrix coefficients of higher degree involves more delicate combinatorics, but the strategy remains the same: integration by parts yields an eigenvalue equation for the matrix coefficients, which is then solved by using a group-dependent completeness relation.


\paragraph*{\textbf{Acknowledgments}}\mbox{}\\
We are very much indebted to Alessandra Cipriani, Bas Janssens and Richard Kraaij for many helpful discussions and for organizing a reading seminar on probabilistic aspects of lattice gauge theory, thereby inciting our interest in this topic.
We gratefully acknowledge support by the NWO grant 639.032.734 \textquote{Cohomology and representation theory of infinite dimensional Lie groups}.

\section{Setting}
\subsection{Differential Geometry on Lie Groups}
In all that follows, we are using the Einstein summation convention where repeated indices are automatically summed over. Occasionally, this convention will be overridden by explicit summation symbols, when we want to more closely specify the range of summation.

For the rest of the paper, consider the following setting: We are given a compact Lie group $G$ and a finite-dimensional, complex, irreducible representation $\rho : G \to  \GL(V)$. For all $g \in G$, write
\begin{equation}
\tr_\rho(g) := \tr(\rho(g)).
\end{equation} 
Equip the Lie algebra \( \LieA{g} = T_e G \) of \( G \) with an \( \AdAction_G \)-invariant positive-definite symmetric bilinear form \( \kappa \).
In the examples we consider, $G$ is usually semisimple and \( \kappa \) a negative multiple of the Killing form.
By translation, the inner product \( \kappa \) induces a canonical bi-invariant Riemannian metric $\langle \cdot, \cdot \rangle$ on \( G \).
We normalize the volume form so that \( G \) as unit volume, and denote the corresponding probability measure by \( \dif g \).  

We fix an orthonormal basis $\{\xi^a \in \mathfrak{g}\}$ of $\LieA{g}$ with respect to \( \kappa \), and use it to define a global frame of $TG$ by left translation. Analogously, the dual basis $\{ \epsilon^a \in \mathfrak{g}^*\}$ associated to $\{\xi^a \}$ defines a global frame of the cotangent bundle $T^* G$.

We collect some abuses of notation: We will denote the global frames of $TG$ and $T^*G$ with the same letters $\xi^a, \epsilon^a$ as the pointwise objects. 
Similarly, we identify elements of \( \LieA{g} \) with left-invariant vector fields on \( G \) and with derivations on \( C^\infty(G) \).  
Further, the Lie group representation $\rho$ induces a Lie algebra representation $\mathfrak{g} \to \End(V)$ which we will denote by the same letter $\rho$.

The Riemannian metric naturally induces the \emph{musical isomorphisms}
\begin{align}
\flat &: TG \to T^*G, \quad v_p \mapsto \langle v_p, \cdot \rangle, \\
\sharp := \flat^{-1} &: T^*G \to TG .
\end{align}
We extend the inner product on the fibers of \( \TBundle G \) to \( \CotBundle G \) by declaring
\begin{equation}
	\scalarProd{\alpha}{\beta} = \scalarProd{\alpha^\sharp}{\beta^\sharp}
\end{equation}
for \( \alpha, \beta \in \CotBundle G \) in the same fiber.

The \emph{Laplace--Beltrami operator} is defined by
\begin{equation}
	\label{eq:DefinitionLaplace}
\Delta: C^\infty(G) \to C^\infty(G), \, f \mapsto \Delta f := \nabla \cdot \nabla f = \xi^a(\xi^a f),
\end{equation}
where the sections \( \xi^a \) are viewed as a vector fields on \( G \), hence derivations on $C^\infty(G)$.
In our sign convention, \( \Delta \) has negative eigenvalues.

The Laplace--Beltrami operator is tightly connected to the Casimir invariant \( C := \xi^a \xi^a \in \mathrm{U}(\LieA{g}) \). 
Under the identification of the universal enveloping algebra \( \mathrm{U}(\LieA{g}) \) with the left-invariant differential operators on \( G \), the Casimir invariant maps to the Laplace--Beltrami operator.

Furthermore, consider the tensor product representation
	\begin{equation}
		\rho \otimes \rho : \mathfrak{g} \to \End(V \otimes V), 
		\quad 
		\xi \mapsto \rho(\xi) \otimes \id + \id \otimes \rho(\xi),
	\end{equation}
and the image of the Casimir invariants under $\rho$ and $\rho \otimes \rho$
	\begin{equation}
	\begin{gathered}
	\rho(C) = \rho(\xi^a) \cdot \rho(\xi^a),
	\\
	(\rho \otimes \rho)(C) =
	\left(\rho(\xi^a) \otimes \id + \id \otimes \rho(\xi^a)\right) 
	\cdot 
	\left(\rho(\xi^a) \otimes \id + \id \otimes \rho(\xi^a)\right).
	\end{gathered}
	\end{equation}
We immediately find
	\begin{equation}
		(\rho \otimes \rho)(C) 
		= 
		\rho(C) \otimes \id 
		+ \id \otimes \, \rho(C)
		+ 2 \rho(\xi^a) \otimes \rho(\xi^a).
	\end{equation}
The Casimir invariant $C$ is well known to be independent of the choice of orthonormal basis, hence, by the above equation, so is the following important operator:
	\begin{equation}
	\label{eq:K-Operator}
		K 
		:= 
		\rho(\xi^a) \tensorProd \rho(\xi^a) 
		= 
		\frac{1}{2} \left( (\rho \otimes \rho)(C) 
		-  
		\rho(C) \otimes \id 
		- 
		\id \otimes \, \rho(C) \right) 
		\in 
		\End(V \otimes V).
	\end{equation}
Relative to a basis in \( V \), it assumes the shape
\begin{equation}
	\label{eq:K-OperatorCoords}
  K_{ijkl} = \xi^a_{ij} \xi^a_{kl}.
\end{equation}
	
The operator $K$ defined in \cref{eq:K-Operator} is independent of the choice of the basis \( \xi^a \) (but depends on \( \kappa \)) and it is sometimes called the split Casimir operator.
Indeed, the nondegenerate bilinear form \( \kappa \) on \( \LieA{g} \) yields the isomorphism \( \mathrm{Hom}(\LieA{g}, \LieA{g}) = \LieA{g} \tensorProd \LieA{g}^* \isomorph \LieA{g} \tensorProd \LieA{g} \).
Composing this with the representation \( \rho: \LieA{g} \to \End(V) \) gives a map \( \mathrm{Hom}(\LieA{g}, \LieA{g}) \to \LieA{g} \tensorProd \LieA{g} \to \End(V) \tensorProd \End(V) \).
The image of the identity under this map is  \( K \).
The operator $K$ is related to the image of the Casimir invariant $C$ through contraction:
\begin{equation}
 \label{eq:CasimirAndK-OperatoRelation}
 \rho(C)_{ij} = K_{ikkj}.
\end{equation}
If the representation \( \rho \) is unitary, then for each \( \xi \in \LieA{g} \) the operator \( \rho(\xi) \) is skew-Hermitian and the operator $K$ has the following symmetry properties:
\begin{equation}
	\label{eq:K-OperatorSymmetry}
	K_{ijkl} = \overline{K_{jilk}}, \qquad K_{ikkj} = \overline{K_{jkki}}.
\end{equation}
In particular, $\rho(C)$ is a Hermitian operator. 

Lastly, recall that, by Schur's Lemma, the images of the Casimir invariants under $\rho$ and $\rho \otimes \rho$ are proportional to the identity on irreducible components of $V$ and $V \otimes V$, respectively. By the second representation of $K$ in \cref{eq:K-Operator}, the same holds for $K$.

\subsection{Brownian motion on Lie Groups}
In this section we recall the definition of the Brownian motion on a compact Lie group. The systematic study of this subject goes back to the pioneering work of \parencite{Hunt1956,Yosida1952,Ito1950}, and we refer the reader to \parencites[Section V.35]{RogersWilliams1994}{Liao2004} for textbook treatments.

As before, \( G \) is a compact Lie group whose Lie algebra \( \LieA{g} \) is endowed with an \( \AdAction_G \)-invariant scalar product \( \kappa \), and $\{\xi^a\}$ denotes an orthonormal basis of \( \LieA{g} \).
Let \( (W_t)_{t \geq 0} \) be the unique centered Gaussian process on \( \LieA{g} \) with covariance matrix
\begin{equation}
	\Expect\bigl(W^a_t \, W^b_s\bigr) = \min(t, s) \, \delta^{ab}, \qquad t, s \geq 0,
\end{equation}
where \( W_t^a = \kappa(\xi^a, W_t) \).
The \emph{(Riemannian) Brownian motion} on \( G \) starting at \( g \in G \) is the unique \( G \)-valued stochastic process \( (g_t)_{t \geq 0} \) which solves the Stratonovich stochastic differential equation
\begin{equation}
	\dif g_t = \xi^a(g_t) \difStrat W_t^a, \qquad g_0 = g .
\end{equation}
That is, for every \( f \in C^\infty(G) \),
\begin{equation}
	f(g_t) = f(g) + \int_0^t (\xi^a f) (g_s) \difStrat W_s^a \, .
\end{equation}
Converting into the Itô calculus yields
\begin{equation}  
	f(g_t) = f(g) + \int_0^t (\xi^a f) (g_s) \dif W_s^a + \frac{1}{2} \int_0^t (\Delta f)(g_s) \dif s \, .
\end{equation}
Moreover, \( g_t \) is a Feller diffusion process on \( G \) whose infinitesimal generator, restricted to smooth functions, is one half of the Laplace operator \( \Delta \).

For \( f \in C(G) \) and \( g \in G \), we denote by \( \Expect_g\bigl(f(g_t)\bigr) = \Expect\bigl(f(g_t) | g_0 = g\bigr) \) the conditional expectation of \( f \) given that \( g_t \) starts at \( g \).
The resulting semigroup is a contraction on \( C(G) \) and satisfies
\begin{equation}
	\Expect_g\bigl(f(g_t)\bigr) = \int_G f(a) \, p_t (g^{-1}a) \, \dif a,
\end{equation}
where, for \( t > 0 \), \( p_t: G \to \R \) is the smooth probability density satisfying the heat equation
\begin{equation}
	\label{eq:brownianMotion:heatEquation}
	\frac{1}{2}\Delta p_t = \frac{\partial}{\partial t} p_t, \qquad \lim_{t \to 0} p_t = \delta_e.
\end{equation}
Usually, we are interested only in processes starting at the identity and then abbreviate \( \Expect \equiv \Expect_e \).

\subsection{Wilson Loops}

In this section, we recall basic elements of the lattice gauge theory.
The reader is referred to the textbooks \parencite{RudolphSchmidt2014,MontvayMuenster1994} for a detailed treatment.

Consider a directed graph \( (\Lambda^0, \Lambda^1) \), which one may think of as being embedded either in space or spacetime.
Here, \( \Lambda^0 \) is the set of all vertices which we assume to be finite, and \( \Lambda^1_+ \) is the set of all directed edges (\ie, ordered pairs of vertices).
For an edge \( e \), let \( s(e), t(e) \in \Lambda^0 \) be its source and target vertices, respectively, and let \( e^{-1} \) be the edge going in the opposite direction.
We denote by \( \Lambda^1_- = \set{e^{-1}: e \in \Lambda^1_+} \) the set of all edges with their orientation reversed, and set \( \Lambda^1_\pm = \Lambda_+ \cup \Lambda_- \).
A (field) configuration is a map \( g: \Lambda^1_+ \to G \) assigning to each edge \( e \) a group element \( g(e) \) that should be thought of as the approximation of the parallel transport along that edge.
We extend \( g \) to a map \( g: \Lambda^1_\pm \to G \) by setting \( g(e^{-1}) = g(e)^{-1} \).

An (oriented) path \( l = (e_1, \dotsc, e_r) \) is an ordered tuple of edges \( e_i \in \Lambda^1_\pm \) such that \( t(e_i) = s(e_{i+1}) \) for all \( 1 \leq i < r \).
A path is called a loop if the edges form a cycle, \ie \( t(e_r) = s(e_1) \).
If, additionally, each edge occurs only once then the loop is called a plaquette (or face).
Given a path \( l \), the product of a configuration \( g \) along \( l \) is defined by \( g(l) = g(e_1) \dotsm g(e_r) \).
Given a choice of a set \( \Lambda^2 \) of plaquettes, the probability density on the space of configurations is given by the Wilson action
\begin{equation}
	\label{eq:lattice:WilsonAction}
	g \mapsto \frac{1}{Z} \exp\left(\beta \sum_{p \in \Lambda^2} \tr_\rho \bigl(g(p)\bigr) \right),
\end{equation}
where \( Z \) is a normalization factor (the partition function) and \( \beta \in \R \) is the so-called inverse temperature.
Here, the trace is taken with respect to a representation \( \rho \) of \( G \), which usually is assumed to be irreducible or even to be the fundamental representation.
One is mainly interested in expectation values of Wilson loop observables.
These are functions \( W_l \) on the space of configurations indexed by loops \( l = (e_1, \dotsc, e_r) \) and are given by
\begin{equation}
	W_l(g) = \tr_\rho \bigl(g(l)\bigr) = \tr_\rho \bigl(g(e_1)^{\pm 1} \dotsm g(e_r)^{\pm 1}\bigr) \,,
\end{equation}
where the sign in the factor \( g(e_i)^{\pm 1} \) is determined based on whether \( e_i \) is an element of \( \Lambda^1_+ \) or \( \Lambda^1_- \).
In other words, one is lead to calculate integrals of the form
\begin{equation}
	\int W_l(g) \, \exp\left(\beta \sum_{p \in \Lambda^2} \tr_\rho \bigl(g(p)\bigr) \right) \dif g \, .
\end{equation}
Since \( \dif g = \prod_{e \in \Lambda^1_+} \dif g_e \) is the product of Haar measures, one can evaluate such an integral by successively integrating over copies of \( G \).

In the following, we are mainly concerned with the resulting integral over a single edge.
For this, it is convenient to change the notation and language slightly and consider restrictions to a single edge.
For the lattice gauge theory calculations that one may want to perform in the end, it is good to remember that Wilson loops do, in fact, depend on many copies of $G$.
\begin{definition}[Wilson Loops]
	Let \( \rho: G \to \End(V) \) be a finite-dimensional representation of \( G \), and for some natural number $r \in \mathbb{N}$, fix an element 
	\begin{equation}
	l = ((c_1,\sigma_1), \dots (c_r, \sigma_r)) \in (G \times \{\pm 1\})^r.
	\end{equation}
	The \emph{(single-argument) Wilson loop} $W_{\rho, l}$ associated with this data is given by
	\begin{equation}
		\label{eq:GeneralRepresentationOfWilsonLoop}
		W_{\rho, l}: G \to \mathbb{C}, \qquad W_{\rho, l}(g) = \tr_\rho( c_1 g^{\sigma_1} c_2 g^{\pm1} \dotsm c_r g^{\sigma_r}).
	\end{equation}
	Often the representation is clear from the context and we simply write \( W_l \) in this case.
	Moreover, we say that the $g^{\pm1}$ between $c_s$ and $c_{s+1}$ is \emph{in the $s$-th position}.
\end{definition}
In this setting, we consider the probability measure to be a single-argument version of the \emph{Wilson action}:
\begin{equation}
	\label{eq:WilsonAction}
	\nu_W(g) = \frac{1}{Z} \exp\left(\beta \sum_{p} W_p (g) \right), \quad g \in G,
\end{equation}
where \( Z \) is a suitable normalization factor, $\beta \in \mathbb{R}$ a fixed number, the sum is over a finite set that is not further specified, and $W_p$ is a single-argument Wilson loop of the form
\begin{equation}
W_p(g) = \tr_\rho(C_p g^{\pm 1}), \quad C_p \in G.
\end{equation}

Below, we also need a slight generalization of a single-argument Wilson loop for which the coefficients are not necessarily elements of the same group.
\begin{definition}[Generalized Wilson Loops]
\label{def:GeneralizedWilsonLoops}
	Let \( \rho: G \to \End(V) \) be a finite-dimensional representation of \( G \), and for some natural number $r \in \mathbb{N}$, fix an element 
	\begin{equation}
	l = ((c_1,\sigma_1), \dots (c_r, \sigma_r)) \in (\End(V) \times \{\pm 1\})^r.
	\end{equation} 
	The \emph{generalized Wilson loop \( W_{\rho, l} \)} associated with this data is given by
	\begin{equation}
		W_{\rho, l}: G \to \C, \qquad W_l(g) = \tr_{V}\bigl(c_1 \rho(g^{\sigma_1}) c_2 \rho(g^{\sigma_2}) \dotsm c_r \rho(g^{\sigma_r})\bigr).
	\end{equation}

	A (generalized) Wilson loop is called \emph{linear} if there is only one factor of \( g \) in the above representation, \ie \( r = 1 \); otherwise it is called \emph{polynomial}.
\end{definition}
The notion of a generalized Wilson loop is inspired by the concept of spin networks.
In fact, a generalized Wilson loop can be visualized as a loop in a graph consisting of two vertices and \( n + 1 \) directed edges, where one edge is decorated by \( \rho(g) \) and the other edges by the endomorphisms \( c_i \).

A linear generalized Wilson loop (with, say, positive exponent on the \( g \) factor) is completely determined by its coefficient \( c \in \End(V) \).
We thus obtain a map
\begin{equation}
	\End(V) \to C^0(G, \C), \qquad c \mapsto W_{\rho, (c)} = \tr_{V}\bigl(c \, \rho(\cdot)\bigr).
\end{equation}
Under the isomorphism \( \End(V) \cong V^* \tensorProd V \) this is nothing but the usual embedding of matrix coefficients.
In other words, linear generalized Wilson loops are just linear combinations of matrix coefficients, and every matrix coefficient is a linear generalized Wilson loop.
\begin{proposition}
	\label{prop:wilsonLoops:expandInLinearLoops}
	Every (generalized) Wilson loop \( W_{\rho, l} \) can be written as a finite linear combination of linear generalized Wilson loops associated with irreducible representations.
	That is, there exists a finite set of irreducible representations \( (\tau, V_\tau) \) of \( G \) and a collection of endomorphisms \( c_\tau \in \End(V_\tau) \) such that
	\begin{equation}
		W_{\rho, l} = \sum_{\tau} W_{\tau, (c_\tau)}.
	\end{equation}
\end{proposition}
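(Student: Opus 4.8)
The plan is to first rewrite $W_{\rho, l}$ as a \emph{single} linear generalized Wilson loop associated with an auxiliary (generally reducible) finite-dimensional representation, and then to split that representation into irreducibles. Write $l = ((c_1, \sigma_1), \dots, (c_r, \sigma_r))$. For each slot $s$ put $V^{+1} := V$, $V^{-1} := V^*$ (the dual space), and $\rho^{+1} := \rho$, $\rho^{-1} := \rho^*$ (the contragredient representation), so that $\rho(g^{-1})$ is the transpose of $\rho^*(g)$ under the canonical identification $\End(V^*) \cong \End(V)$. Form the finite-dimensional tensor product representation $\pi := \bigotimes_{s=1}^{r} \rho^{\sigma_s}$ on $W := \bigotimes_{s=1}^{r} V^{\sigma_s}$.

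The first step is to show $W_{\rho, l}(g) = \tr_W\bigl(D\,\pi(g)\bigr)$ for some fixed $D \in \End(W)$ independent of $g$. The assignment $(X_1, \dots, X_r) \mapsto \tr_V(c_1 X_1 c_2 X_2 \dotsm c_r X_r)$ is multilinear, and by the previous paragraph $\rho(g^{\sigma_s})$ is a fixed linear function of $\rho^{\sigma_s}(g)$ (the identity if $\sigma_s = +1$, the transpose if $\sigma_s = -1$). Hence $g \mapsto W_{\rho, l}(g)$ factors through $g \mapsto \rho^{\sigma_1}(g) \otimes \dotsm \otimes \rho^{\sigma_r}(g)$, producing a linear functional $\Phi$ on $\bigotimes_s \End(V^{\sigma_s}) \cong \End(W)$ with $W_{\rho, l}(g) = \Phi(\pi(g))$; and every linear functional on $\End(W)$ equals $X \mapsto \tr_W(D X)$ for a unique $D$. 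Equivalently, and more vividly, $D$ is obtained from the spin-network picture of $W_{\rho, l}$ by ``bending the wires'': it is built from $c_1 \otimes \dotsm \otimes c_r$, the cyclic closure implicit in the trace, and a transposition in each slot with $\sigma_s = -1$. (When all $\sigma_s = +1$ this is just $D = P\,(c_1 \otimes \dotsm \otimes c_r)$ with $P$ the cyclic-shift operator on $V^{\otimes r}$.)

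For the second step, recall that since $G$ is compact every finite-dimensional representation is completely reducible, and $V$, $V^*$, and their tensor products are finite-dimensional; hence there is a finite direct sum decomposition $W = \bigoplus_\tau V_\tau$ into $G$-invariant subspaces on which $\pi$ restricts to irreducible representations $(\tau, V_\tau)$. Let $\iota_\tau : V_\tau \to W$ and $\Pi_\tau : W \to V_\tau$ be the inclusion and the corresponding projection, so that $\pi(g) = \sum_\tau \iota_\tau\, \rho_\tau(g)\, \Pi_\tau$. Substituting into $W_{\rho, l}(g) = \tr_W(D\pi(g))$ and using cyclicity of the trace yields
\begin{equation*}
  W_{\rho, l}(g) = \sum_\tau \tr_W\bigl(D\,\iota_\tau\, \rho_\tau(g)\, \Pi_\tau\bigr) = \sum_\tau \tr_{V_\tau}\bigl(c_\tau\, \rho_\tau(g)\bigr) = \sum_\tau W_{\tau, (c_\tau)}(g),
\end{equation*}
where $c_\tau := \Pi_\tau\, D\, \iota_\tau \in \End(V_\tau)$. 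Collecting the summands whose $\tau$ are isomorphic (adding the corresponding $c_\tau$ under a fixed identification) turns the index set into a genuine finite set of irreducible representations, which is the asserted form.

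The argument is essentially bookkeeping resting on a single structural input: complete reducibility of finite-dimensional representations of the compact group $G$. I expect the only mildly delicate point to be the first step --- realizing $W_{\rho, l}$ as $\tr_W(D\pi(g))$ and pinning down $D$ --- because of the transpositions forced by the inverse factors $\rho(g^{-1})$; handling these diagrammatically by bending the corresponding strands makes the identification routine and avoids index juggling.
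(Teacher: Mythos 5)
Your argument is correct, and it takes a genuinely different route from the paper. The paper observes that $W_{\rho,l}(ag) = W_{\rho, a\cdot l}(g)$, so the span of all generalized Wilson loops for a fixed $\rho$ is a finite-dimensional left-translation-invariant subspace of $C^0(G,\C)$; hence every such loop is a representative function, and the paper then cites the standard fact (Br\"ocker--tom Dieck, Prop.~III.1.5) that representative functions are finite linear combinations of matrix coefficients of irreducible representations. You instead realize $W_{\rho,l}$ concretely as a \emph{single} linear generalized loop $g \mapsto \tr_W\bigl(D\,\pi(g)\bigr)$ for the tensor product representation $\pi = \bigotimes_s \rho^{\sigma_s}$ on $W = \bigotimes_s V^{\sigma_s}$ (the multilinearity argument plus nondegeneracy of the trace pairing pins down $D$, and the transposition handling of the $\sigma_s = -1$ slots is fine since transposition is linear), and then invoke complete reducibility of $W$ to peel off the irreducible summands with explicit coefficients $c_\tau = \Pi_\tau D\,\iota_\tau$. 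Both proofs ultimately rest on compactness of $G$; the paper's is shorter because it delegates to the representative-function theorem, while yours is more constructive --- it exhibits the decomposition and the endomorphisms $c_\tau$ directly from the isotypic splitting of $W$, which is exactly the kind of data one needs when actually running the algorithm following \cref{prop:theoremA} (and it meshes naturally with the tensor-representation viewpoint of \cref{prop:matrixCoefficients:tensorRep}). One cosmetic remark: the explicit formula $D = P\,(c_1 \otimes \dotsm \otimes c_r)$ is not needed for the proof (abstract existence of $D$ suffices), so its verification can be omitted or relegated to a remark.
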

\begin{proof}
	Note that a (generalized) Wilson loop transforms under left translation as \( W_{\rho, l}(a g) = W_{\rho, a \cdot l}(g) \) where the action \( a \cdot l \) of \( a \in G \) on the coefficients \( c_i \) is either by left or right translation or conjugation depending on the signatures.
	This shows that the linear span \( \mathrm{span} \, W_{\rho, \cdot} \) of all generalized Wilson loops (relative to a given representation \( \rho \)) is a left \( G \)-translation invariant subspace of \( C^0(G, \C) \).
	By choosing a basis in \( \End(V) \), we obtain a finite spanning set so that \( \mathrm{span} \, W_{\rho, \cdot} \) is finite-dimensional.
	Hence, every (generalized) Wilson loop is a so-called \emph{representative function}, see \parencite[Definition~III.1.1]{BrockerDieck1985}.
	By \parencite[Proposition~III.1.5]{BrockerDieck1985}, every representative function is a finite linear combination of matrix coefficients with respect to irreducible representations.
	As we have remarked above, the latter are linear generalized Wilson loops.
\end{proof}

\section{Integration by parts and expectation of Wilson loops}
\label{sec:expectationWilsonLoops}

The following identity is of fundamental importance for us, and it is derived by a simple application of integration by parts.
\begin{lemma}
	\label{prop:IntegrationByParts}
	Let \( G \) be a compact Lie group and let \( \nu \) be a probability density with respect to the normalized Haar measure on \( G \).
	For smooth functions $F_{1}, \dots, F_{q}$ on $G$, 
	\begin{equation}\begin{split}
	\sum_{r=1}^q \int_G (\Delta F_{r}) \, & F_{1} \dotsm \widehat{F_{r}} \dotsm F_{q} \, \nu \, \dif g
	=
	\int_G F_{1} \dotsm F_{q} \, \Delta \nu \, \dif g
	\\
	&- 2 \sum_{\substack{r, s = 1 \\ r < s}}^q \int_G \scalarProd{\dif F_{r}}{\dif F_{s}} \,  F_{1} \dotsm \widehat{F_{r}} \dotsm \widehat{F_{s}} \dotsm F_{q} \, \nu \, \dif g 
	\end{split}\end{equation}
	where the hat signifies omission of the corresponding term.
\end{lemma}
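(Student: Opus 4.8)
The plan is to obtain the identity from two standard properties of the Laplace--Beltrami operator on the compact Riemannian manifold $G$: self-adjointness of $\Delta$ with respect to the normalized Haar measure, and a higher-order Leibniz rule for $\Delta$ applied to a product of functions. Once both are in hand, the lemma follows by multiplying the Leibniz identity by $\nu$, integrating, and rearranging.

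First I would record the self-adjointness. Each $\xi^a$ is a left-invariant vector field, so its flow consists of right translations; since $G$ is compact, the normalized Haar measure $\dif g$ is bi-invariant, and therefore $\int_G (\xi^a u)\,\dif g = 0$ for every $u \in C^\infty(G)$, i.e.\ the $\xi^a$ are divergence-free. Applying this to $u = vw$ gives $\int_G (\xi^a v)\, w\, \dif g = -\int_G v\,(\xi^a w)\, \dif g$, and iterating once more yields
\[
	\int_G (\Delta v)\, w\, \dif g = - \int_G \scalarProd{\dif v}{\dif w}\, \dif g = \int_G v\,(\Delta w)\, \dif g ,
\]
where I also use that in the orthonormal frame $\{\xi^a\}$ one has $\scalarProd{\dif v}{\dif w} = \xi^a(v)\,\xi^a(w)$, because $(\dif v)^\sharp = \xi^a(v)\,\xi^a$.

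Next I would establish the higher Leibniz rule
\[
	\Delta(F_1 \dotsm F_q) = \sum_{r=1}^q (\Delta F_r)\, F_1 \dotsm \widehat{F_r} \dotsm F_q + 2 \sum_{r < s} \scalarProd{\dif F_r}{\dif F_s}\, F_1 \dotsm \widehat{F_r} \dotsm \widehat{F_s} \dotsm F_q .
\]
The base case $q = 2$, namely $\Delta(fh) = (\Delta f) h + 2 \scalarProd{\dif f}{\dif h} + f (\Delta h)$, is immediate from $\Delta = \xi^a \xi^a$ and the first-order Leibniz rule for the derivations $\xi^a$, together with the frame identity above. The general case follows by induction on $q$: treat $F_1 \dotsm F_{q-1}$ as a single smooth function, apply the $q=2$ identity, and expand $\Delta(F_1 \dotsm F_{q-1})$ and $\scalarProd{\dif(F_1 \dotsm F_{q-1})}{\dif F_q} = \sum_{t < q} F_1 \dotsm \widehat{F_t} \dotsm F_{q-1}\, \scalarProd{\dif F_t}{\dif F_q}$ via the inductive hypothesis and the Leibniz rule. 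Keeping track of which pairs $(r,s)$ produce the mixed gradient terms is the only step requiring care, but it is purely combinatorial.

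Finally, I would multiply the higher Leibniz identity by $\nu$ and integrate over $G$. On the left-hand side, self-adjointness of $\Delta$ turns $\int_G \Delta(F_1 \dotsm F_q)\, \nu\, \dif g$ into $\int_G F_1 \dotsm F_q\, \Delta\nu\, \dif g$; on the right-hand side the two sums are precisely the first sum in the statement and (up to the factor $2$ and a sign) the double sum. Moving the double sum to the other side gives the claimed formula. I do not expect a genuine obstacle here: the work is confined to making the divergence-free/self-adjointness argument and the inductive Leibniz computation precise, both of which are routine.
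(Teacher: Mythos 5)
Your proof is correct and rests on the same mechanism as the paper's: the paper simply integrates by parts twice (first moving $\Delta$ off $F_r$ onto the product of the remaining factors and $\nu$, then moving the derivative off $\nu$ again), which is exactly what your combination of the higher-order Leibniz rule for $\Delta$ with self-adjointness of $\Delta$ under the bi-invariant Haar measure encodes. The only difference is packaging: you isolate the combinatorics in a pointwise identity proved by induction and then use $\int_G (\Delta u)\,\nu \,\dif g = \int_G u\,\Delta\nu \,\dif g$ once, whereas the paper performs the weighted integration by parts term by term; both routes are routine and give the stated identity.
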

\begin{proof}
	Using integration by parts twice, we obtain
	\begin{equation}\begin{split}
		- \sum_r \int_G &(\Delta F_{r}) \, F_{1} \dotsm \widehat{F_{r}} \dotsm F_{q} \, \nu \, \dif g
		\\
		&=
		\sum_r \int_G \scalarProd{\dif F_{r}}{\dif \nu} \, F_{1} \dotsm \widehat{F_{r}} \dotsm F_{q} \, \dif g
		\\
		&\qquad+ \sum_{r \neq s} \int_G \scalarProd{\dif F_{r}}{\dif F_{s}} \,  F_{1} \dotsm \widehat{F_{r}} \dotsm \widehat{F_{s}} \dotsm F_{q} \, \nu \, \dif g
		\\
		&=
		- \sum_r \int_G \Delta \nu \, F_{1} \dotsm F_{q} \, \dif g
		\\
		&\qquad+ 2 \sum_{r < s} \int_G \scalarProd{\dif F_{r}}{\dif F_{s}} \,  F_{1} \dotsm \widehat{F_{r}} \dotsm \widehat{F_{s}} \dotsm F_{q} \, \nu \, \dif g
	\end{split}\end{equation} 
	and the claimed equality follows immediately.
\end{proof}
In this section, we will make use of this basic lemma by applying it to a family $W_{l_1}, \dots, W_{l_q}$ of single-argument Wilson loops.
For simplicity, we consider only the case where all Wilson loops are defined with respect to the same representation \( \rho \) and where the coefficients are elements of the group (\ie, normal Wilson loops instead of generalized ones).
However, with minor modifications, everything we say generalizes to generalized Wilson loops with respect to possibly different representations, see \cref{rem:theoremA:forGeneralizedWilsonLoops} below for more details.
The significance of \cref{prop:IntegrationByParts} lies in the fact that, for Wilson loops, both sides of the relation can be evaluated and this yields a non-trivial identity.
Moreover, each side has a geometric interpretation: The Laplacian \( \Delta W_l \) of a Wilson loop gives rise to what we call the twisting of \( W_l \), and the inner product \( \scalarProd{\dif W_{l}}{\dif W_{l'}} \) of two Wilson loops yields their merging.

\subsection{Merging: Calculation of the right-hand side}
Within this subsection, we will focus on the calculation of the term involving the inner product of two Wilson loops.
Given a Wilson loop $W_l$, let $E_+(l)$ be the positions $j$ where $W_l$ has the identity $g$ and $E_-(l)$ the positions where $W_l$ has the inverse $g^{-1}$. Let $E(l) := E_+(l) \cup E_-(l)$. Similarly for $E_+{(l')}$,$E_-{(l')}$ and $E{(l')}$.  
Consider the matrix component functions $g_{ij} := \rho(g)_{ij} : G \to \mathbb{C}$ and $\xi_{ij} := \rho(\xi)_{ij} : \mathfrak{g} \to \mathbb{C}$. We find:
\begin{equation}
\label{eq:MatrixElementsDifferential}
d g_{ij} = g_{il} \cdot \xi^a_{lj} \cdot  \epsilon^a,
\quad
d (g^{-1})_{ij} = - \xi^a_{il} \cdot (g^{-1})_{lj} \cdot \epsilon^a.
\end{equation}
Using these expressions for the differentials $dg_{ij}$ and $d g_{ij}^{-1}$, and the expression~\eqref{eq:GeneralRepresentationOfWilsonLoop} for a general single-argument Wilson loop, we find
\begin{equation}
\label{eq:ExpansionOfdWl}
\begin{aligned}
&d W_l = \\
&\sum_{j \in E_+(l)} 
(c_1 g^{\pm 1} \dotsm c_{j-1} g^{\pm 1} c_j g)^{}_{k_1 k_2}
\xi^a_{k_2 k_3} 
(c_{j+1} g^{\pm 1} \dotsm c_n g^{\pm 1})^{}_{k_3 k_1} \, d\xi^a\\
&-
\sum_{j \in E_-(l)} 
(c_1 g^{\pm 1} \dotsm c_{j-1} g^{\pm 1} c_j)^{}_{k_1 k_2} 
\xi^a_{k_2 k_3} 
(g^{-1} c_{j+1} g^{\pm 1} \dotsm c_n g^{\pm 1})^{}_{k_3 k_1} \, d\xi^a. 
\end{aligned}
\end{equation}
To simplify notation, we introduce the following definition.
\begin{definition}[Merging loops in general representations] $ $ \newline
\label{def:MergingLoopsGenerally}
For two single-argument Wilson loops of the form $W_l(g) = \tr_\rho(C g^{\sigma_1}), W_{l'}(g) = \tr_\rho(D g^{\sigma_2} )$ with $C,D \in G$ and exponents $\sigma_1, \sigma_2 \in \{\pm 1\}$, we define their \emph{merging} $\mathcal{M}(W_l, W_{l'}) : G \to \mathbb{C}$, depending on the value of the tuple of exponents $(\sigma_1, \sigma_2)$, as follows:
 \begin{equation}
  \mathcal{M}(W_l, W_{l'})(g) =
  \begin{cases}
   +\tr_\rho( Cg \xi^a) \cdot \tr_\rho( D  g \xi^a)
    &\text{ if } (\sigma_1, \sigma_2) = (+,+),\\
   -\tr_\rho( C g \xi^a )  \cdot \tr_\rho( D  \xi^a g^{-1})
	& \text{ if } (\sigma_1, \sigma_2) = (+,-), \\
   - \tr_\rho( C \xi^a g^{-1})  \cdot \tr_\rho( D g \xi^a )
   	& \text{ if } (\sigma_1, \sigma_2) = (-,+), \\
   +\tr_\rho( C \xi^a g^{-1})  \cdot \tr_\rho( D  \xi^a g^{-1}) 	& \text{ if } (\sigma_1, \sigma_2) = (-,-).
  \end{cases}	  
 \end{equation}
Note the implicit sum over the Lie algebra index $a$ in all of the above.
The merging of two \emph{generalized} Wilson loops is defined analogously.
Since the above case distinctions depending on the tuple $(\sigma_1, \sigma_2)$ will occur more often later, we will adopt the following equivalent notation for brevity:
 \begin{equation}
  \mathcal{M}(W_l, W_{l'})(g) =
  \begin{cases}
  (+,+): &+\tr_\rho( Cg \xi^a) \cdot \tr_\rho( D  g \xi^a),\\
  (+,-): &-\tr_\rho( C g \xi^a )  \cdot \tr_\rho( D  \xi^a g^{-1})\\
   (-,+): &- \tr_\rho( C \xi^a g^{-1})  \cdot \tr_\rho( D g \xi^a ), \\
   (-,-): &+\tr_\rho( C \xi^a g^{-1})  \cdot \tr_\rho( D  \xi^a g^{-1}).
  \end{cases}	  
 \end{equation}
For two arbitrary single-argument Wilson loops \( W_l \) and \( W_{l'} \) with distinguished factors $g^{\pm 1}$ in the, respectively, $j$-th and $j'$-th position, their \emph{merging \( \mathcal{M}_{jj'}(W_l, W_{l'}) \) at the $j$-th and $j'$-th positions} is defined by the same formulas after the Wilson loops have been expressed in the above form\footnotemark{} with \( C, D \) possibly depending on \( g \).\footnotetext{That is, using the cyclicity of the trace: \( W_l(g) = \tr_\rho( c_1 g^{\pm1} c_2 g^{\pm1} \dotsm c_n g^{\pm1}) = \tr_\rho (c_{j+1}g^{\pm1} c_{j+2}g^{\pm1} \dotsm c_n g^{\pm1} c_1 g^{\pm1} c_2 g^{\pm1} \dotsm c_j g^{\pm1}) \).}
The \emph{total merger} of two loops $W_l,W_{l'}$ is defined as
	\begin{equation}
		\mathcal{M}(W_{l}, W_{l'}) :=
		\sum_{\substack{j \in E(l), \\ j' \in E(l')}}  
		\mathcal{M}_{jj'}(W_l, W_{l'}).
	\end{equation}
\end{definition}
\begin{remark}
	Note that this is \emph{not} equal to what in \cite{Chatterjee2019} is called the merging of loops when $G = SO(N)$. However, there is a relation between the notions, which is outlined in \cref{sec:examples:so}.
\end{remark}
The particular form of the merge operation depends on the Lie algebra under study, and it is completely controlled by the operator $K$ \cref{eq:K-OperatorCoords}. In fact, we have
\begin{equation}
	\label{eq:MergingRulesGenerally}
	\mathcal{M}(W_l, W_{l'})(g)
	= K_{ijkl} \cdot \begin{cases}
	(+,+):  \quad + \, C_{js} g_{si} \; \, D_{lt} g_{tk}, \\
	(+,-):  \quad - \, C^{}_{js} g^{}_{si} \; \, D^{}_{tk} g^{-1}_{lt}, \\
	(-,+):  \quad - \, C^{}_{si} g^{-1}_{js} \, D^{}_{lt} g^{}_{tk}, \\
	(-,-):  \quad + \, C^{}_{si} g^{-1}_{js} \, D^{}_{tk} g^{-1}_{lt}. \\
	\end{cases}
\end{equation}
Identities expressing \( K \) in terms of elementary matrices are called \emph{completeness relations}.
These relations usually allow one to rewrite \( \mathcal{M}_{jj'}(W_l, W_{l'}) \) as a linear combination of certain Wilson loops.
Below in \cref{sec:examples} we discuss this exemplarily for \( G = \OGroup(N) \), \(\SpGroup(N) \), \( \UGroup(N) \), \( \SUGroup(N) \) in more detail.
Note, however, that in general the merging of two Wilson loops is not a linear combination of Wilson loops again as the example of \( G_2 \) shows.
On the other hand, the class of \emph{generalized} Wilson loops is closed under the merging operation.
\begin{proposition}
	\label{prop:merging:againWilsonLoop}
	The merging of two generalized Wilson loops \( W_{C_1, \theta_1}(g) = \tr_V\bigl({C_1} \rho(g^{\theta_{1}})\bigr) \) and \( W_{C_2, \theta_2}(g) = \tr_V\bigl({C_2} \rho(g^{\theta_{2}})\bigr) \) is the generalized Wilson loop
	\begin{equation}
		\mathcal{M}(W_{C_1, \theta_1}, W_{C_2, \theta_2})(g) = \tr_{V^{\theta_1, \theta_2}}\bigl(K^{\theta_1, \theta_2} \, \psi^{\theta_1,\theta_2}(C_1, C_2) \, \rho^{\theta_1, \theta_2}(g)\bigr),
	\end{equation}
	where, depending on the signatures \( (\theta_1, \theta_2) \), the representation \( \rho^{\theta_1, \theta_2} \) is defined by
	\begin{equation}
		V^{\theta_1, \theta_2} = \begin{cases}
			(+,+):  \quad V \tensorProd V, \\
			(+,-):  \quad V \tensorProd V^*, \\
			(-,+):  \quad V^* \tensorProd V, \\
			(-,-):  \quad V^* \tensorProd V^*, \\
			\end{cases}
		\qquad
		\rho^{\theta_1, \theta_2} = \begin{cases}
			(+,+):  \quad \rho \tensorProd \rho, \\
			(+,-):  \quad \rho \tensorProd \rho^*, \\
			(-,+):  \quad \rho^* \tensorProd \rho, \\
			(-,-):  \quad \rho^* \tensorProd \rho^*, \\
			\end{cases}
	\end{equation}
	and the map \( \psi^{\theta_1,\theta_2}: \End(V)^2 \to \End(V^{\theta_1, \theta_2}) \) is defined by
	\begin{equation}
		\psi^{\theta_1,\theta_2}(C, D) = \begin{cases}
			(+,+):  \quad C \tensorProd D, \\
			(+,-):  \quad C \tensorProd D^*, \\
			(-,+):  \quad C^* \tensorProd D, \\
			(-,-):  \quad C^* \tensorProd D^*, \\
			\end{cases}
	\end{equation}
	and \( K^{\theta_1, \theta_2} = \rho^{\theta_1}(\xi^a) \tensorProd \rho^{\theta_2}(\xi^a) \) with \( \rho^{+} = \rho \) and \( \rho^{-} = \rho^* \).
\end{proposition}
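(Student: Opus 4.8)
The plan is to reduce the identity to two elementary facts and then carry out a short, sign-careful computation one slot at a time. The first fact is that the trace is multiplicative across tensor products: for an operator \( X \) on a vector space \( A \) and an operator \( Y \) on a vector space \( B \), one has \( \tr_A(X)\,\tr_B(Y) = \tr_{A\tensorProd B}(X\tensorProd Y) \), which is immediate in a basis. The second is that passing to a dual representation is implemented by transposition: for \( M\in\End(V) \) one has \( \tr_V(M) = \tr_{V^*}(M^*) \), where \( M^*\in\End(V^*) \) is the transpose, and moreover \( \rho(g^{-1})^* = \rho^*(g) \) and \( \rho(\xi)^* = -\rho^*(\xi) \) for \( \xi\in\LieA{g} \); the last two relations are just the definition of the contragredient representation \( \rho^* \) on \( V^* \) and of its differential. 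Throughout I write \( \rho^+=\rho,\ V^+=V,\ \psi^+(C)=C \) and \( \rho^-=\rho^*,\ V^-=V^*,\ \psi^-(C)=C^* \), so that \( \rho^{\theta_1,\theta_2}=\rho^{\theta_1}\tensorProd\rho^{\theta_2} \), \( V^{\theta_1,\theta_2}=V^{\theta_1}\tensorProd V^{\theta_2} \) and \( \psi^{\theta_1,\theta_2}(C_1,C_2)=\psi^{\theta_1}(C_1)\tensorProd\psi^{\theta_2}(C_2) \) in the notation of the proposition.

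First I would put each linear loop into the single-slot normal form of \cref{def:MergingLoopsGenerally}. Since \( W_{C_i,\theta_i}(g)=\tr_\rho(C_i\,g^{\theta_i}) \) is already of the required shape and has a unique slot, \( \mathcal{M}(W_{C_1,\theta_1},W_{C_2,\theta_2})(g) \) is just the single term prescribed by the line of the case distinction belonging to \( (\theta_1,\theta_2) \), and in every one of the four cases the sign prefactor there equals the number \( \theta_1\theta_2\in\{\pm1\} \). Next I would rewrite each factor on its own. For \( \theta_i=+ \), cyclicity of the trace gives \( \tr_\rho(C_i\,g\,\xi^a)=\tr_{V}\bigl(\rho(\xi^a)\,C_i\,\rho(g)\bigr) \). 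For \( \theta_i=- \), transposition together with the two relations above gives
\[
	\tr_\rho(C_i\,\xi^a\,g^{-1}) \;=\; \tr_{V^*}\bigl(\rho(g^{-1})^*\,\rho(\xi^a)^*\,C_i^*\bigr) \;=\; -\,\tr_{V^*}\bigl(\rho^*(\xi^a)\,C_i^*\,\rho^*(g)\bigr),
\]
where the last step uses \( \rho(\xi^a)^*=-\rho^*(\xi^a) \) and cyclicity. Hence in both cases the factor in question equals \( \theta_i\cdot\tr_{V^{\theta_i}}\bigl(\rho^{\theta_i}(\xi^a)\,\psi^{\theta_i}(C_i)\,\rho^{\theta_i}(g)\bigr) \).

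Multiplying the two factors, the prefactor \( \theta_1\theta_2 \) of the merging is cancelled exactly by the two factors \( \theta_1 \) and \( \theta_2 \) just produced (since \( \theta_i^2=1 \)), so the merging equals \( \tr_{V^{\theta_1}}\bigl(\rho^{\theta_1}(\xi^a)\,\psi^{\theta_1}(C_1)\,\rho^{\theta_1}(g)\bigr)\cdot\tr_{V^{\theta_2}}\bigl(\rho^{\theta_2}(\xi^a)\,\psi^{\theta_2}(C_2)\,\rho^{\theta_2}(g)\bigr) \), with the sum over the Lie algebra index \( a \) understood. Finally I would apply the tensor-product multiplicativity of the trace with \( A=V^{\theta_1} \) and \( B=V^{\theta_2} \); since \( a \) occurs only in the two \( \xi^a \) factors, its sum reassembles \( K^{\theta_1,\theta_2}=\rho^{\theta_1}(\xi^a)\tensorProd\rho^{\theta_2}(\xi^a) \), and the right-hand side becomes \( \tr_{V^{\theta_1,\theta_2}}\bigl(K^{\theta_1,\theta_2}\,\psi^{\theta_1,\theta_2}(C_1,C_2)\,\rho^{\theta_1,\theta_2}(g)\bigr) \), which is the assertion. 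The one point that needs care --- essentially the whole content of the proof --- is this sign bookkeeping: one must verify that the minus signs attached in \cref{def:MergingLoopsGenerally} to the mixed signatures and the plus signs attached to the pure signatures are accounted for precisely by a single factor \( -1 \) per inverted slot coming from \( \rho(\xi)^*=-\rho^*(\xi) \). Handling the two slots independently, as above, makes this transparent and avoids going through the four signature cases separately; a purely coordinate-based variant using \( K_{ijkl}=\xi^a_{ij}\xi^a_{kl} \), \( (\rho^*(g))_{mn}=(g^{-1})_{nm} \) and \( (\rho^*(\xi))_{mn}=-\xi_{nm} \) also works but is more laborious.
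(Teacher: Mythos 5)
Your proof is correct and follows essentially the same route as the paper's: the paper also reduces the claim to the transposition identity \( \tr_V\bigl(C\rho(\xi)\rho(g^{-1})\bigr) = -\tr_{V^*}\bigl(\rho^*(g)\rho^*(\xi)C^*\bigr) \) together with multiplicativity of the trace over tensor products, treating the case \( (\theta_1,\theta_2)=(+,-) \) explicitly and declaring the others analogous. Your only deviation is presentational: by factoring the sign as \( \theta_1\theta_2 \) and handling each slot independently you dispose of all four signature cases at once, which is a slightly cleaner way to organize the same sign bookkeeping.
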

\begin{proof}
	For simplicity, we only give the proof for the case \( (\theta_1, \theta_2) = (+,-) \); the other cases are analogous.
	Since the trace is invariant under transposition, we have
	\begin{equation}
		\tr_V \bigl(C \rho(\xi) \rho(g^{-1})\bigr) = \tr_{V^*} \bigl(\rho(g^{-1})^* \rho(\xi)^* C^*\bigr) = - \tr_{V^*} \bigl(\rho^*(g) \rho^*(\xi) C^*\bigr)
	\end{equation}
	for \( C \in \End(V) \), \( \xi \in \LieA{g} \) and \( g \in G \).
	Thus, by~\eqref{def:MergingLoopsGenerally}, we find
	\begin{equation}\begin{split}
		\mathcal{M}(W_{C_1, \theta_1}, W_{C_2, \theta_2})(g)
			&= - \tr_{V}\bigl(C_1 \rho(g) \rho(\xi^a)\bigr) \cdot \tr_{V}\bigl(C_2 \rho(\xi^a) \rho(g^{-1})\bigr)
			\\
			&= \tr_{V}\bigl(\rho(\xi^a) C_1 \rho(g)\bigr) \cdot \tr_{V^*}\bigl(\rho^*(\xi^a) C_2^* \rho^*(g)\bigr)
			\\
			&= \tr_{V \tensorProd V^*}\bigl(\rho(\xi^a) \tensorProd \rho^*(\xi^a) \circ C_1 \tensorProd C_2^* \circ \rho(g) \tensorProd \rho^*(g) \bigr),
	\end{split}\end{equation}
	which finishes the proof.
\end{proof}

With the above notation and Equation~\eqref{eq:ExpansionOfdWl}, we arrive at the following expression:
	\begin{equation}
		\label{eq:MergingLoopsGenerally}
		\langle d W_l, d W_{l'} \rangle =
		\sum_{\substack{j \in E(l), \\ j' \in E(l')}}  
		\mathcal{M}_{jj'}(W_l, W_{l'})
		= \mathcal{M}(W_{l}, W_{l'})
	\end{equation}
\subsection{Twisting: Calculation of the left-hand side}
Let us now examine what the action of the Laplacian on a Wilson loop. We start off by using the higher-order product rule
	\begin{equation}
		\Delta(f \cdot h) = f \cdot \Delta h + \Delta(f) \cdot h + 2 \, \langle \dif f, \dif h \rangle
	\end{equation}
	for \( f,h \in C^\infty(G) \).	
In the representation~\eqref{eq:GeneralRepresentationOfWilsonLoop} for a single-argument Wilson loop $W_l$, this yields:
	\begin{equation}
	\begin{aligned}
		&\Delta W_l =
		\sum_{j=1}^n (c_1 g^{\pm 1} \dotsm c_j)_{i_1 i_2}  
		\Delta(g^{\pm 1}_{i_2 i_3}) 
		(c_{j+1} \dotsm c_n g^{\pm 1})_{i_3 i_1}
		\\
		&+
		\sum_{j \neq k} 
		\langle d g^{\pm 1}_{i_{2} i_3}, d g^{\pm 1}_{i_4 i_5} \rangle
		(c_1 g^{\pm1} \dotsm c_j)_{i_1 i_2} 
		(c_{j+1} \dotsm c_k)_{i_{3} i_{4}} 
		(c_{k+1} \dotsm c_n g^{\pm1})_{i_5 i_1}.
		\end{aligned}
	\end{equation}
 Recall that by the Peter-Weyl theorem, matrix elements of irreducible representations are eigenfunctions of the Laplacian, and matrix elements to the same irreducible representation lie in the same eigenspace. Hence, the first sum in the above is a scalar multiple of \( W_l \). 
The mixed term takes a form that is very similar to the mergers of \cref{def:MergingLoopsGenerally}, except that the loop is ``merged with \emph{itself}'', in two different locations. Let us make this precise with the following definition.
\begin{definition}[Twisting loops in general representations]
Given a single-ar\-gu\-ment loop $W_l(g) = \tr_\rho(Cg^{\sigma_1} D g^{\sigma_2})$ with $C,D \in G$ and exponents $\sigma_1, \sigma_2 \in \{\pm 1\}$.
We define its \emph{twisting} $\mathcal{T}(W_l) : G \to \mathbb{C}$, depending on the value of the tuple of exponents $(\sigma_1, \sigma_2)$, as follows:
	\begin{equation}
		\mathcal{T}(W_l)(g) =
			\begin{cases}
			(+,+): &+\tr_\rho(C g \xi^a D g \xi^a), \\
			(+,-): &-\tr_\rho(C g \xi^a D \xi^a g^{-1}),  \\
			(-,+): &-\tr_\rho(C \xi^a g^{-1} D g \xi^a),   \\
			(-,-): &+\tr_\rho(C \xi^a g^{-1} D \xi^a g^{-1}).
			\end{cases}		
	\end{equation}
For an arbitrary single-argument Wilson loops \( W_l \) with distinguished factors $g^{\pm 1}$ in the, respectively, $j$-th and $j'$-th position, its \emph{twisting \( \mathcal{T}_{jj'}(W_l) \) at the $j$-th and $j'$-th positions} is defined by the same formulas after the Wilson loops have been expressed in the above form with \( C, D \) possibly depending on \( g \) (cf. \cref{def:MergingLoopsGenerally}).
The \emph{total twisting} of a loop $W_l$ is defined as
	 \begin{equation}
		 \mathcal{T}(W_l) :=
		 \sum_{\substack{j, j' \in E(l) \\ j \neq j' }} 
		 \mathcal{T}_{jj'}(W_l).
	 \end{equation}
\end{definition}
Note that the twisting of a Wilson loop, too, is completely determined by the operator \( K \):
\begin{equation}
	\label{eq:TwistingRulesGenerally}
	\mathcal{T}(W_l)(g)
	= K_{ijkl} \cdot \begin{cases}
	(+,+):  \quad + \, C_{ls} g_{si} \; \, D_{jt} g_{tk}, \\
	(+,-):  \quad - \, C^{}_{ts} g^{}_{si} \; \, D^{}_{jk} g^{-1}_{lt}, \\
	(-,+):  \quad - \, C^{}_{li} g^{-1}_{js} \, D^{}_{st} g^{}_{tk}, \\
	(-,-):  \quad + \, C^{}_{ti} g^{-1}_{js} \, D^{}_{sk} g^{-1}_{lt}. \\
	\end{cases}
\end{equation}
This formula should be compared with the expression~\eqref{eq:MergingRulesGenerally} for the merging, which has the same structure in \( K \) and \( \rho(g^{\pm 1}) \tensorProd \rho(g^{\pm1}) \) but the contraction with the tensor \( \rho(C) \tensorProd \rho(D) \) is different.
As a consequence of Schur's lemma, the matrix elements are eigenfunctions of the Laplace operator $\Delta$.
Using its definition~\eqref{eq:DefinitionLaplace} and \cref{eq:CasimirAndK-OperatoRelation}, we find
\begin{equation}
	\label{eq:EigenvalueK-OperatorGeneral:calculation}
	\lambda g_{ij} \stackrel{!}{=} \Delta g_{ij} = \xi^a (\xi^a g_{ij}) = g^{}_{il} \xi^a_{lk} \xi^a_{kj} = g_{il} K_{lkkj} = g_{il} \rho(C)_{lj}.
\end{equation}
Hence the eigenvalue \( \lambda \) of the Laplace operator equals the eigenvalue of the Casimir invariant $C \in \mathrm{U}(\LieA{g})$ in the representation $\rho$:
\begin{equation}
	\label{eq:EigenvalueK-OperatorGeneral}
	\rho(C)_{ij} = K_{ikkj} = \lambda \delta_{ij}.
\end{equation}
Thus we can rewrite the Laplacian of a Wilson loop $W_l$ in terms of the twisting $\mathcal{T}( W_l )$, the eigenvalue $\lambda$, and the number $n$ counting the amount of $g^{\pm 1}$-factors contained in $W_l$:
	 \begin{equation}
		\Delta W_l =
		\lambda \cdot n \cdot W_l
		+
		\sum_{j \neq j' \in E(l)} \mathcal{T}_{jj'}(W_l) = \lambda \cdot n \cdot W_l + \mathcal{T}(W_l).
	 \end{equation}
\subsection{Synthesis}
Combining the calculated terms with \cref{prop:IntegrationByParts} we get the following theorem.
\begin{theorem}
\label{prop:theoremA}
Let $G$ be a compact Lie group equipped with a probability density \( \nu \), $\rho: G \to V$ an irreducible, finite-dimensional representation and $W_{l_1}, \dots , W_{l_q}\colon G \to \mathbb{C}$ a collection of single-argument Wilson loops.
Let $\lambda \in \mathbb{C}$ be the eigenvalue of the Casimir \( \rho(C) \), and denote the number of factors of \( g \) or \( g^{-1} \) in the canonical representation of the Wilson loop \( W_{l_r} \) by \( n_r \).
Then we have
	\begin{equation}
		\label{eq:theoremA}
		\begin{aligned}
			\lambda \sum_{r=1}^q  n_r \cdot \int_G W_{l_1} &\dotsm W_{l_q}  \, \nu \, \dif g
			=
			\\
			&- 2 \sum_{\substack{r, s = 1 \\ r < s}}^q  \int_G  \mathcal{M}(W_{l_r},W_{l_s}) \cdot W_{l_1} \dotsm \widehat{W}_{l_r} \dotsm \widehat{W}_{l_s} \dotsm W_{l_q}  \, \nu \,  \dif g
			\\
			&-\sum_{r=1}^q \int_G \mathcal{T}(W_{l_r}) \cdot  W_{l_1} \dotsm \widehat{W}_{l_r} \dotsm W_{l_q}  \, \nu \, \dif g
			\\
			&+ \int_G W_{l_1} \dotsm W_{l_q} \, \Delta \nu \, \dif g.
		\end{aligned}
	\end{equation}
\end{theorem}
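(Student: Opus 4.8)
The plan is to apply \cref{prop:IntegrationByParts} directly to the family of smooth functions $F_r := W_{l_r}$, $r = 1, \dots, q$, and then substitute the two geometric identities established in the preceding subsections for the two kinds of terms that appear: $\Delta W_{l_r}$ on the left and $\scalarProd{\dif W_{l_r}}{\dif W_{l_s}}$ on the right.

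First I would evaluate the left-hand side of \cref{prop:IntegrationByParts}. Using the higher-order product rule to expand $\Delta W_{l_r}$ in the canonical representation~\eqref{eq:GeneralRepresentationOfWilsonLoop}, the ``diagonal'' part where the Laplacian hits a single factor $g^{\pm 1}_{ij}$ contributes, by the Peter--Weyl theorem and \cref{eq:EigenvalueK-OperatorGeneral}, the common eigenvalue $\lambda$ once for each of the $n_r$ factors (for the inverse factors one uses either that $g \mapsto g^{-1}$ is an isometry of the bi-invariant metric, or \cref{eq:MatrixElementsDifferential} together with \cref{eq:EigenvalueK-OperatorGeneral}), so this part is $\lambda n_r W_{l_r}$. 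The ``mixed'' part, where the two derivatives hit distinct factors in positions $j \neq j'$, matches, by \cref{eq:MatrixElementsDifferential} and the definition of twisting, exactly the term $\mathcal{T}_{jj'}(W_{l_r})$; summing over $j \neq j'$ in $E(l_r)$ gives $\Delta W_{l_r} = \lambda n_r W_{l_r} + \mathcal{T}(W_{l_r})$, as already recorded above. Thus the left-hand side of \cref{prop:IntegrationByParts} becomes $\sum_r \int_G \bigl(\lambda n_r W_{l_r} + \mathcal{T}(W_{l_r})\bigr) W_{l_1} \dotsm \widehat{W}_{l_r} \dotsm W_{l_q}\, \nu\, \dif g$.

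Next I would evaluate the cross terms on the right-hand side. By \cref{eq:ExpansionOfdWl} each differential $\dif W_{l_r}$ is a sum over positions in $E(l_r)$ of terms of the form (matrix block)$\,\cdot\, \xi^a \cdot\,$(matrix block)$\, \dif \xi^a$; pairing $\dif W_{l_r}$ with $\dif W_{l_s}$ under $\scalarProd{\cdot}{\cdot}$, which identifies $\epsilon^a$ with an orthonormal coframe, contracts the two Lie-algebra indices and yields exactly $\mathcal{M}_{jj'}(W_{l_r}, W_{l_s})$ for each pair of positions, hence $\scalarProd{\dif W_{l_r}}{\dif W_{l_s}} = \mathcal{M}(W_{l_r}, W_{l_s})$ as in \cref{eq:MergingLoopsGenerally}. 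Finally I substitute both identities into \cref{prop:IntegrationByParts}: the $\lambda n_r W_{l_r}$ pieces reassemble the full product and collect into $\lambda \sum_r n_r \int_G W_{l_1} \dotsm W_{l_q}\, \nu\, \dif g$; the $\mathcal{T}(W_{l_r})$ pieces are moved to the right-hand side; the cross terms become the merging sum; and the $\int_G W_{l_1} \dotsm W_{l_q}\, \Delta \nu\, \dif g$ term is carried over unchanged. Rearranging gives \cref{eq:theoremA}. There is no analytic obstacle here: everything has been prepared in the two preceding subsections, so the only point requiring care is the bookkeeping — checking that the count $n_r$ of $g^{\pm 1}$-factors is precisely what the diagonal part of the product-rule expansion produces, and that the self-merging term $\mathcal{T}_{jj'}(W_{l_r})$ matches the mixed term with the signs dictated by the signature case distinction in the definitions of $\mathcal{M}$ and $\mathcal{T}$.
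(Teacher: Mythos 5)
Your proposal is correct and follows essentially the same route as the paper: the theorem is obtained by applying \cref{prop:IntegrationByParts} to the family $W_{l_1},\dotsc,W_{l_q}$ and substituting the two identities prepared in the preceding subsections, namely $\Delta W_{l_r} = \lambda n_r W_{l_r} + \mathcal{T}(W_{l_r})$ and $\scalarProd{\dif W_{l_r}}{\dif W_{l_s}} = \mathcal{M}(W_{l_r},W_{l_s})$, then rearranging. Your bookkeeping of the diagonal and mixed terms, including the equal Casimir eigenvalue for the $g^{-1}$ factors, matches the paper's synthesis.
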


As applications, let us state \cref{prop:theoremA} for the three different choices of probability densities \( \nu \) introduced above.
\begin{corollary}[Haar measure]
	\label{prop:theoremA:haar}
	In the setting of \cref{prop:theoremA}, we have
	\begin{equation}
		\begin{aligned}
			\lambda \sum_{r=1}^q  n_r \cdot \int_G W_{l_1} &\dotsm W_{l_q}  \, \dif g
			=
			\\
			&- 2 \sum_{\substack{r, s = 1 \\ r < s}}^q  \int_G  \mathcal{M}(W_{l_r},W_{l_s}) \cdot W_{l_1} \dotsm \widehat{W}_{l_r} \dotsm \widehat{W}_{l_s} \dotsm W_{l_q}  \,  \dif g
			\\
			&-\sum_{r=1}^q \int_G \mathcal{T}(W_{l_r}) \cdot  W_{l_1} \dotsm \widehat{W}_{l_r} \dotsm W_{l_q}  \, \dif g .
		\end{aligned}
	\end{equation}
\end{corollary}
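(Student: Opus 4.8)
The plan is to obtain Corollary~\ref{prop:theoremA:haar} as a direct specialization of Theorem~\ref{prop:theoremA} to the constant density. The normalized Haar measure $\dif g$ is, by definition, absolutely continuous with respect to itself with density $\nu \equiv 1$; since $G$ was normalized to have unit volume, this $\nu$ is indeed a probability density, so the hypotheses of Theorem~\ref{prop:theoremA} are met with this choice.

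The only thing to verify is that the inhomogeneous term $\int_G W_{l_1}\dotsm W_{l_q}\,\Delta\nu\,\dif g$ in~\eqref{eq:theoremA} drops out. This is immediate: each frame vector field $\xi^a$ acts as a derivation on $C^\infty(G)$ and hence annihilates the constant function, so by the definition~\eqref{eq:DefinitionLaplace} of the Laplace--Beltrami operator we have $\Delta\nu = \xi^a(\xi^a 1) = 0$. Substituting $\nu \equiv 1$ into~\eqref{eq:theoremA} and discarding this vanishing term yields exactly the asserted identity, with the eigenvalue $\lambda$, the counts $n_r$, and the merging and twisting operators being precisely those of Theorem~\ref{prop:theoremA}.

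Alternatively, one could re-run the derivation from the start with $\nu \equiv 1$: apply Lemma~\ref{prop:IntegrationByParts} to the family $F_r = W_{l_r}$, in which case the $\Delta\nu$-term is already absent, and then substitute the evaluation of the right-hand side in terms of total mergers from~\eqref{eq:MergingLoopsGenerally} together with the expression for $\Delta W_{l_r}$ as $\lambda\, n_r\, W_{l_r} + \mathcal{T}(W_{l_r})$. Both routes give the same conclusion; there is no genuine obstacle, the statement being a formal corollary whose entire content is the observation $\Delta 1 = 0$.
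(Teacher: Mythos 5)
Your proposal is correct and matches the paper's (implicit) argument: the corollary is just Theorem~\ref{prop:theoremA} with \( \nu \equiv 1 \), and the inhomogeneous term vanishes because \( \Delta 1 = 0 \). Nothing further is needed.
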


\begin{corollary}[Brownian motion]
	\label{prop:theoremA:brownian}
	In the setting of \cref{prop:theoremA}, we have
	\begin{equation}
		\begin{aligned}
			\int_G W_{l_1} &\dotsm W_{l_q}  \, p_t \, \dif g
			= W_{l_1}(e) \dotsm W_{l_q}(e)
			\\
			&\qquad+ \frac{1}{2} e^{\frac{t}{2} \lambda \sum_{r=1}^q  n_r}
			\int_0^t
			\mathcal{MT}(s) 
				\, e^{- \frac{s}{2} \lambda \sum_{r=1}^q  n_r} \, \dif s,
		\end{aligned}
	\end{equation}
	where
	\begin{equation}\begin{split}
		\mathcal{MT}(t) &=
		2 \sum_{\substack{r, s = 1 \\ r < s}}^q \int_G \mathcal{M}(W_{l_r},W_{l_s}) \cdot W_{l_1} \dotsm \widehat{W}_{l_r} \dotsm \widehat{W}_{l_s} \dotsm W_{l_q} \, p_t \, \dif g
		\\
		&\quad+ \sum_{r=1}^q \int_G \mathcal{T}(W_{l_r}) \cdot  W_{l_1} \dotsm \widehat{W}_{l_r} \dotsm W_{l_q} \, p_t \, \dif g.
	\end{split}\end{equation}
\end{corollary}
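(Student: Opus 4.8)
The plan is to turn \cref{prop:theoremA} into a linear first-order ODE for the time-dependent expectation and then integrate it. Write \( f(t) := \int_G W_{l_1}\dotsm W_{l_q}\, p_t\, \dif g \) and set \( \mu := \lambda\sum_{r=1}^q n_r \). For every fixed \( t>0 \) the heat kernel \( p_t \) is a smooth probability density depending smoothly on \( t \); since \( G \) is compact we may differentiate under the integral and use the heat equation~\eqref{eq:brownianMotion:heatEquation} to obtain
\begin{equation}
  f'(t) = \int_G W_{l_1}\dotsm W_{l_q}\, \partial_t p_t\, \dif g = \frac{1}{2}\int_G W_{l_1}\dotsm W_{l_q}\, \Delta p_t\, \dif g .
\end{equation}

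Next I would apply \cref{prop:theoremA} with \( \nu = p_t \) — legitimate for each fixed \( t>0 \), where \( p_t\in C^\infty(G) \) — and solve the resulting identity for the term \( \int_G W_{l_1}\dotsm W_{l_q}\, \Delta p_t\, \dif g \). Recognising the merging and twisting integrals as \( \mathcal{MT}(t) \), this yields \( \int_G W_{l_1}\dotsm W_{l_q}\, \Delta p_t\, \dif g = \mu\, f(t) + \mathcal{MT}(t) \), hence the ODE \( f'(t) = \frac{\mu}{2} f(t) + \frac{1}{2} \mathcal{MT}(t) \). Equivalently, one can bypass \cref{prop:theoremA} and use the self-adjointness of \( \Delta \) with respect to the Haar measure to move \( \Delta \) onto \( W_{l_1}\dotsm W_{l_q} \), and then plug the twisting identity \( \Delta W_{l_r} = \lambda n_r W_{l_r} + \mathcal{T}(W_{l_r}) \) and the merging identity \( \scalarProd{\dif W_{l_r}}{\dif W_{l_s}} = \mathcal{M}(W_{l_r}, W_{l_s}) \) into the Leibniz expansion of \( \Delta(W_{l_1}\dotsm W_{l_q}) \).

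Finally I would solve the ODE with the integrating factor \( e^{-\mu t/2} \): from \( \frac{\dif}{\dif t}\bigl(e^{-\mu t/2} f(t)\bigr) = \frac{1}{2} e^{-\mu t/2}\mathcal{MT}(t) \), integration over \( [0,t] \) gives the claimed closed form once the initial value is pinned down. The latter is \( \lim_{t\to 0^+} f(t) = W_{l_1}(e)\dotsm W_{l_q}(e) \), which follows from the initial condition \( \lim_{t\to 0} p_t = \delta_e \) in~\eqref{eq:brownianMotion:heatEquation} together with the continuity of \( W_{l_1}\dotsm W_{l_q} \) on the compact group \( G \).

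The individual steps are routine; the only care required is bookkeeping around interchanging \( \partial_t \) with \( \int_G \), invoking \cref{prop:theoremA} with a time-dependent density, and the weak limit as \( t\to 0 \). One should also note that the integrating-factor computation places \( e^{\mu t/2} \) in front of the whole bracket \( W_{l_1}(e)\dotsm W_{l_q}(e) + \frac{1}{2}\int_0^t \mathcal{MT}(s)\, e^{-\mu s/2}\dif s \); the test case of a single linear single-argument Wilson loop, where \( \mathcal{MT}\equiv 0 \) and \( \Expect\bigl(W_{l_1}(g_t)\bigr) = e^{\mu t/2}\, W_{l_1}(e) \), fixes this normalisation.
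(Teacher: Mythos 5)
Your proposal is correct and follows essentially the same route as the paper's proof: apply \cref{prop:theoremA} with \( \nu = p_t \), use the heat equation to turn the \( \Delta\nu \)-term into \( 2f'(t) \), solve the resulting first-order linear ODE with the integrating factor \( e^{-\mu t/2} \), and pin down the initial value via \( \lim_{t\to 0}p_t=\delta_e \). Your closing normalisation remark is also right: the solution of \( f'(t)=\tfrac{\mu}{2}f(t)+\tfrac12\mathcal{MT}(t) \) carries \( e^{\frac{t}{2}\lambda\sum_{r} n_r} \) in front of the \emph{whole} bracket, including \( W_{l_1}(e)\dotsm W_{l_q}(e) \); the formula as printed in the corollary (and the ``general solution'' quoted in the paper's proof) misplaces this factor, as your single-linear-loop test against \cref{prop:matrixCoefficients:brownian} confirms.
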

\begin{proof}
	Using the heat equation,~\eqref{eq:theoremA} reduces, for the Brownian motion, to a first-order linear differential equation of the form
	\begin{equation}
		c f(t) - 2 f'(t) = h(t),
	\end{equation}
	where \( f(t) \) is the expectation of the product of Wilson loops, and \( h(t) \) includes the merging or twisting terms.
	This equation has the general solution
	\begin{equation}
		f(t) = f(0) - \frac{1}{2} e^{\frac{c}{2} t} \int_0^t h(s) e^{- \frac{c}{2} s} \, \dif s.
	\end{equation}
	Since the heat kernel approaches the delta distribution at the identity as \( t \to 0 \), the initial value is \( f(0) = W_{l_1}(e) \dotsm W_{l_q}(e) \).
	This completes the proof.
\end{proof}

\begin{corollary}[Wilson action]
	\label{prop:theoremA:wilson}
	In the setting of \cref{prop:theoremA}, we have
	\begin{equation}
		\label{eq:theoremA:wilson}
		\begin{aligned}
			\lambda \sum_{r=1}^q  n_r \cdot \int_G &W_{l_1} \dotsm W_{l_q}  \, \nu_W \, \dif g
			=
			\\
			&- 2 \sum_{\substack{r, s = 1 \\ r < s}}^q  \int_G  \mathcal{M}(W_{l_r},W_{l_s}) \cdot W_{l_1} \dotsm \widehat{W}_{l_r} \dotsm \widehat{W}_{l_s} \dotsm W_{l_q}  \, \nu_W \,  \dif g
			\\
			&-\sum_{r=1}^q \int_G \mathcal{T}(W_{l_r}) \cdot  W_{l_1} \dotsm \widehat{W}_{l_r} \dotsm W_{l_q}  \, \nu_W \, \dif g
			\\
			&+\beta \lambda \sum_{p} \int_G W_p  \cdot W_{l_1} \dotsm W_{l_q} \,  \nu_W \, \dif g
			\\
			&+ \beta^2 \sum_{p,p'} \int_G \mathcal{M}(W_p, W_{p'}) \cdot W_{l_1} \dotsm W_{l_q} \, \nu_W \, \dif g.
		\end{aligned}
	\end{equation}
\end{corollary}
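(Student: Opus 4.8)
The plan is to specialize \cref{prop:theoremA} to the density \( \nu = \nu_W \) and to make the boundary term \( \int_G W_{l_1} \dotsm W_{l_q} \, \Delta \nu_W \, \dif g \) of~\eqref{eq:theoremA} explicit; the first three lines of~\eqref{eq:theoremA:wilson} are then a verbatim copy of the corresponding lines of~\eqref{eq:theoremA}. First I would compute \( \Delta \nu_W \). Writing \( \phi := \beta \sum_p W_p \) so that \( \nu_W = \frac{1}{Z} e^{\phi} \), the chain rule for the Laplace--Beltrami operator --- an immediate consequence of the second-order product rule \( \Delta(f h) = f \, \Delta h + (\Delta f) \, h + 2 \scalarProd{\dif f}{\dif h} \) --- gives \( \Delta e^{\phi} = e^{\phi}\bigl( \Delta \phi + \scalarProd{\dif \phi}{\dif \phi} \bigr) \). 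Expanding \( \phi \) and using linearity of \( \Delta \) together with bilinearity of the fiberwise inner product on \( \CotBundle G \),
\begin{equation*}
	\Delta \nu_W = \nu_W \Bigl( \beta \sum_p \Delta W_p + \beta^2 \sum_{p, p'} \scalarProd{\dif W_p}{\dif W_{p'}} \Bigr).
\end{equation*}

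Next I would recognise the two sums via the computations of the preceding subsections. Each \( W_p(g) = \tr_\rho(C_p g^{\pm 1}) \) is a \emph{linear} Wilson loop, hence \( E(l_p) \) is a singleton and the total twisting \( \mathcal{T}(W_p) = \sum_{j \neq j' \in E(l_p)} \mathcal{T}_{jj'}(W_p) \) is an empty sum; therefore the identity \( \Delta W_l = \lambda \cdot n_l \cdot W_l + \mathcal{T}(W_l) \) collapses to \( \Delta W_p = \lambda W_p \). (Concretely: \( W_p \) is a linear combination of matrix coefficients of \( \rho \) or of \( \rho^* \), and the Casimir acts by \( \lambda \) on both, since \( \rho^*(C) = \rho(C)^{\mathsf T} = \lambda \, \id \).) For the mixed term, identity~\eqref{eq:MergingLoopsGenerally} says precisely \( \scalarProd{\dif W_p}{\dif W_{p'}} = \mathcal{M}(W_p, W_{p'}) \). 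Substituting both,
\begin{equation*}
	\Delta \nu_W = \nu_W \Bigl( \beta \lambda \sum_p W_p + \beta^2 \sum_{p, p'} \mathcal{M}(W_p, W_{p'}) \Bigr).
\end{equation*}

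Finally I would insert this into the last line of~\eqref{eq:theoremA}, so that \( \int_G W_{l_1} \dotsm W_{l_q} \, \Delta \nu_W \, \dif g \) becomes
\begin{equation*}
	\beta \lambda \sum_p \int_G W_p \, W_{l_1} \dotsm W_{l_q} \, \nu_W \, \dif g + \beta^2 \sum_{p, p'} \int_G \mathcal{M}(W_p, W_{p'}) \, W_{l_1} \dotsm W_{l_q} \, \nu_W \, \dif g,
\end{equation*}
which is exactly the last two lines of~\eqref{eq:theoremA:wilson}. There is no real obstacle here: the only two points requiring a moment's care are that the twisting term vanishes for the one-factor loops \( W_p \), and that \( \scalarProd{\dif \phi}{\dif \phi} \) must be expanded bilinearly over pairs of plaquette loops --- both immediate from the definitions and from the identities for \( \Delta W_l \) and \( \scalarProd{\dif W_l}{\dif W_{l'}} \) already established.
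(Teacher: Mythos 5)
Your proposal is correct and follows essentially the same route as the paper: compute \( \Delta \nu_W \) via \( \Delta e^{\phi} = (\Delta \phi + \scalarProd{\dif \phi}{\dif \phi}) e^{\phi} \), use that the single-factor plaquette loops \( W_p \) are Casimir eigenfunctions (no twisting term) and that \( \scalarProd{\dif W_p}{\dif W_{p'}} = \mathcal{M}(W_p, W_{p'}) \), then substitute into the last line of~\eqref{eq:theoremA}. Your added remark that \( \rho^*(C) = \rho(C)^{\mathsf T} = \lambda \, \id \), justifying the same eigenvalue for loops with a \( g^{-1} \) factor, is a correct refinement of the paper's brief assertion.
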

\begin{proof}
	The general identity \( \Delta \exp(f) = \bigl(\Delta f + \dualPair{\dif f}{\dif f}\bigr) \exp(f) \) implies for the Wilson action defined in~\eqref{eq:WilsonAction} that
	\begin{equation}\begin{split}
		\Delta \nu_W
			&= \Bigl(\beta \sum_{p} \Delta W_p + \beta^2 \sum_{p,p'} \dualPair{\dif W_p}{\dif W_{p'}}\Bigr) \, \nu_W
			\\
			&= \Bigl(\beta \lambda \sum_{p} W_p + \beta^2 \sum_{p,p'} \mathcal{M}(W_p, W_{p'})\Bigr) \, \nu_W,
	\end{split}\end{equation}
	where, in the second line, we used that Wilson loops with a single argument of \( g \) are eigenfunctions of the Laplacian and that the scalar product of two such loops equals their merging.
	Inserting this equality in~\eqref{eq:theoremA} yields~\eqref{eq:theoremA:wilson}. 
\end{proof}

\Cref{prop:theoremA:wilson} is essentially a generalization of \parencite[Thm 8.1]{Chatterjee2019}, which studies the case $G = \SOGroup(N)$ in the fundamental representation.
The main difference between our and their presentation is that we have used integration by parts as the basic tool rather than Stein's method.
One can obtain Chatterjee's result on the nose by carrying out integration by parts \emph{once}.
However, in our derivation of \cref{prop:theoremA} we have used it \emph{twice}.
This has the added benefit that now the Wilson loop observables decouple from the plaquette variables and one no longer has mergers (or ``deformations'' in the terminology of \parencite{Chatterjee2019}) between Wilson loops and plaquettes.
\begin{remark}
	\label{rem:theoremA:forGeneralizedWilsonLoops}
	In \cref{prop:theoremA} and its corollaries, we have assumed that \( W_{l_1} \dotsm W_{l_q} \) are single-argument Wilson loops with respect to the same representation.
	A careful inspection of the calculation reveals that, with minor modifications, those results generalize to \emph{generalized} Wilson loops in possibly different representations.
	For example, the merger of two Wilson loops with different representations \( \rho \) and \( \rho' \) is defined by essentially the same formula as in \cref{def:MergingLoopsGenerally} with the only difference that one trace is taken with respect to \( \rho \) and the other one with respect to \( \rho' \).
	Similarly, the eigenvalue \( \lambda \) in \cref{prop:theoremA} may now depend on the Wilson loop so that the factor \( \lambda \sum_{r=1}^q n_r \) needs to be replaced by \( \sum_{r=1}^q n_r \lambda_r \).
\end{remark}

By \cref{prop:wilsonLoops:expandInLinearLoops}, we have seen that every polynomial Wilson loop can be written as a linear combination of linear generalized loops with respect to different representations.
Note that for linear loops \cref{prop:theoremA} simplifies as there is no longer a twisting term.
In particular, for the Haar measure and the Brownian motion, the relations in \cref{prop:theoremA:haar,prop:theoremA:brownian} simplify to recursion relations involving less and less products of loops.
This observation can be used to calculate the expectation values of the product of arbitrary Wilson loops \( W_{l_1}, \dotsc, W_{l_q} \) according to the following algorithm.
\begin{enumerate}
	\item
		Expand each Wilson loop \( W_{l_i} \) in terms of linear generalized loops (with respect to irreducible representations) as in \cref{prop:wilsonLoops:expandInLinearLoops}.
	\item
		For linear loops, solve the recursion relation in \cref{prop:theoremA:haar,prop:theoremA:brownian} by induction over the number of loops involved.
	\item
		For a single linear loop \( W_{\rho, (c)} \) with respect to a non-trivial irreducible representation \( (\rho, V) \), the expectation value can be calculated using the results of the next section.
		In particular, the expectation
		\begin{equation}
			\int_G W_{\rho, (c)} \dif g = \tr_V\Bigl(c \int_G \rho(g) \dif g\Bigr)
		\end{equation}
		vanishes since \( \int_G \rho(g) \dif g \) is the projection onto \( V^G = \set{0} \).
		This serves as the induction start in the case of the Haar measure.

		For the Brownian motion, \cref{prop:matrixCoefficients:brownian} below implies that
		\begin{equation}
			\int_G W_{\rho, (c)} \, p_t \dif g = \tr_V\Bigl(c \int_G \rho(g) \, p_t \dif g\Bigr) = \exp\Bigl(\frac{1}{2} c_\rho t\Bigr) \tr_V(c),
		\end{equation}
		where \( c_\rho \) is the Casimir eigenvalue.
\end{enumerate}
The following example illustrates this algorithm for the simplest case, namely \( G = \UGroup(1) \).
\begin{example}[Circle group]
	The irreducible representations of \( \UGroup(1) \) are one dimensional and given by \( \rho_n(z) = z^n \) for some \( n \in \Z \).
	Thus, irreducible Wilson loops are of the form \( W_{n, c} = c z^n \) with \( n \in \Z \) and \( c \in \C \).
	Note that the Casimir invariant of \( \rho_n \) is \( - n^2 \).
	
	As an example, let us calculate the expectation value of the product of two arbitrary Wilson loops \( W_1 \) and \( W_2 \) according to the above algorithm.
	\begin{enumerate}
		\item
			The expansion of \( W_1 \) according to \cref{prop:wilsonLoops:expandInLinearLoops}, in this case, just amounts to writing it as a Fourier series:
			\begin{equation}
				W_1 = \sum_{n=-\infty}^{\infty} W_{n, c_{1, n}},
			\end{equation}
			where only finitely many constants \( c_{1,n} \in \C \) are non-zero.
			Similarly, for \( W_2 \) with constants \( c_{2,n} \).
		\item
			By \cref{prop:theoremA:haar}, we have
			\begin{equation}
				- (n^2 + m^2) \int_{\UGroup(1)} W_{n, c}(z) W_{m, d}(z) \dif z = - 2 \int_{\UGroup(1)} \mathcal{M}(W_{n, c}, W_{m, d})(z) \dif z.
			\end{equation}
			The merger is given by \( \mathcal{M}(W_{n, c}, W_{m, d})(z) = c (\I n) z^n \cdot d (\I m) z^m \), \cf \cref{def:MergingLoopsGenerally,rem:theoremA:forGeneralizedWilsonLoops}.
			In line with \cref{prop:merging:againWilsonLoop}, this is again a generalized Wilson loop, namely \( \mathcal{M}(W_{n, c}, W_{m, d}) = W_{n+m, - cd nm} \).
		\item
			Clearly, \( \int_{\UGroup(1)} W_{n,c} \dif z \) vanishes except if \( n = 0 \).
	\end{enumerate}
	Thus, in summary,
	\begin{equation}\begin{split}
		\int_{\UGroup(1)} W_1(z) W_2(z) \dif z 
			&= \sum_{n,m=-\infty}^\infty \int_{\UGroup(1)} W_{n, c_{1, n}} W_{m, c_{2, m}}(z) \dif z  \\
			&= \sum_{n,m=-\infty}^\infty \frac{2}{n^2 + m^2} \int_{\UGroup(1)} W_{n+m, - c_{1,n}c_{2,m} nm}(z) \dif z \\
			&= \sum_{n=-\infty}^\infty c_{1,n}c_{2,-n},
	\end{split}\end{equation}
	which, of course, coincides with the result one gets by a direct calculation.
\end{example}

The same strategy can be used to calculate the mixed moments of the random variable \( g \mapsto \tr_\rho(g^k) \).
In this case, the expansion of \( \tr_\rho(g^k) \) as a linear combination of linear Wilson loops can be achieved by decomposing the \( k \)-th tensor power into irreducible components.
For the latter, Schur--Weyl duality can be used and then the second step in the above algorithm essentially boils down to an orthogonality relation of the characters of the dual group.
For Haar distributed variables, this recovers \parencites[Theorem~2]{DiaconisShahshahani1994}[Theorem~2.1]{DiaconisEvans2001} for the unitary group, and \parencite[Theorem~3]{HughesRudnick2003} for the orthogonal and symplectic group.
We leave the details to the reader.

\begin{remark}
	The same algorithm does not work when the Wilson probability measure is added, because the right-hand side of~\eqref{eq:theoremA:wilson} contains terms with more Wilson loops than the original integral.
	In fact, it is a notoriously hard problem to calculate Wilson loop expectation values with respect to the Wilson action, and one has to resort to certain limits to obtain a reasonable result.
	The effect of the two most common limits, namely the strong-coupling expansion and the large \( N \)-limit for \( \UGroup(N) \) or \( \SUGroup(N) \), is readily apparent from~\eqref{eq:theoremA:wilson}.
	In the strong-coupling limit \( \beta \to 0 \), the additional terms with more Wilson loops are suppressed.
	Similarly, the merging and twisting terms as well as \( \lambda \) scale with \( N \) or simplify in the large \( N \)-limit.
	This limit has been extensively studied in \parencite{Chatterjee2019,Jafarov2016}. 
\end{remark}

\section{Polynomials of matrix coefficients}

In this section, we discuss how the basic integration by parts formula of \cref{prop:IntegrationByParts} can be used to determine polynomials of matrix coefficients.

As before, let \( G \) be a compact connected Lie group and \( \nu \) be a probability density with respect to the normalized Haar measure on \( G \).
For a (not necessarily irreducible) real or complex representation \( \rho: G \to \GL(V) \) of \( G \) on a finite-dimensional vector space, define \( T(\nu): V \to V \) by
\begin{equation}
	\label{eq:matrixCoefficients:definition}
	T(\nu) = \int_G \rho(g) \, \nu(g) \dif g \, .
\end{equation}
\begin{example}
	Let \( \varrho \) be a representation of \( G \) on the vector space \( W \).
	Usually, this is taken to be the fundamental representation of \( G \).
	Consider the tensor representation \( \rho = \varrho^{\tensorProd n, \tensorProd n'} = \varrho \tensorProd \dotsm \tensorProd \varrho \tensorProd \varrho^* \tensorProd \dotsm \tensorProd \varrho^* \) on \( V = W^{\tensorProd n} \tensorProd (W^*)^{\tensorProd n'} \) with \( n \) factors of \( \varrho \) and \( n' \) factors of the dual representation \( \varrho^*(g) = \rho(g^{-1})^* \).
	Using bold-face multi-indices \( \mathbf{i} = (i; i') = (i^{}_1, \dotsc, i^{}_n; i'_1, \dotsc, i'_{n'}) \) to denote the components of elements of \( W^{\tensorProd n} \tensorProd (W^*)^{\tensorProd n'} \), we find
	\begin{equation}
		\label{eq:MatrixCoefficients:relationWithTensorProdRep}
		T(\nu)_{\mathbf{ij}} = \int_G g^{}_{i_1 j_1} \dotsm g^{}_{i_n j_n} \; g^{-1}_{j'_1 i'_1} \dotsm g^{-1}_{j'_{n'} i'_{n'}} \, \nu \,  \dif g \,,
	\end{equation}
	where \( g_{kl} = \varrho(g)_{kl} \) are the matrix coefficients of \( g \) in the representation \( \varrho \).
	Thus, in this case, \( T(\nu) \) completely encodes the \( \nu \)-expectation value of polynomials in matrix coefficients and their inverses.
	The formulation in terms of the tensor product linearizes the problem of determining the polynomial coefficients on \( G \) to a study of the linear operator \( T(\nu) \).
\end{example}
Somewhat surprisingly the simple integration by parts formula of \cref{prop:IntegrationByParts} combined with basic representation theory of compact Lie groups allows us to determine \( T(\nu) \).
Before we discuss this in detail, let us recall the isotypic decomposition.
Consider a representation \( \rho \) of \( G \) on a vector space \( V \).
Since \( G \) is compact, \( V \) decomposes into a direct sum of irreducible representations \( V_\tau \), see, \eg, \parencite[Corollary~IV.4.7]{Knapp2002}\footnote{\Textcite{Knapp2002} only discusses the case of a complex representation. The proof in the real case is almost identical except that one uses a \( G \)-invariant real inner product.}.
For a given irreducible representation \( \tau \), define the isotypic component \( V_{\equivClass{\tau}} \) to be the sum of all \( V_{\tau'} \) for which \( \tau' \) is equivalent to \( \tau \); with the convention that \( V_{\equivClass{\tau}} = \set{0} \) if there is no such subrepresentation.
The resulting direct sum decomposition
\begin{equation}
	\label{eq:matrixCoefficients:isotypicDecomp}
	V = \bigoplus_{\equivClass{\tau} \in \hat{G}} V_{\equivClass{\tau}}
\end{equation}
is called the \emph{isotypic decomposition}.
Here the sum is over the set \( \hat{G} \) of equivalence classes of irreducible representations of \( G \).
Note that the isotypic component corresponding to the trivial representation is the set \( V^G \) of invariant elements. 
\begin{theorem}
	\label{prop:matrixCoefficients:general}
	Let \( G \) be a compact connected Lie group and \( \nu \) be a probability density with respect to the normalized Haar measure on \( G \).
	For a real or complex representation \( \rho: G \to \GL(V) \) of \( G \), the operator \( T(\nu): V \to V \) defined in~\eqref{eq:matrixCoefficients:definition} respects the isotypic decomposition~\eqref{eq:matrixCoefficients:isotypicDecomp} and satisfies
	\begin{equation}
		c_{\equivClass{\tau}} T(\nu)_{|V_{\equivClass{\tau}}} = T(\Delta \nu)_{|V_{\equivClass{\tau}}}
	\end{equation}
	for each irreducible subrepresentation \( \tau \), where \( c_{\equivClass{\tau}} \in \R \) are non-positive constants depending only on the isomorphism type of the representation \( \tau \).
	Moreover, \( c_{\equivClass{\tau}} = 0 \) if and only if \( \tau \) is the trivial representation.
\end{theorem}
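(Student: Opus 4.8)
The plan is to reduce the whole statement to a single operator identity, $T(\Delta\nu) = T(\nu)\,\rho(C)$, where $\rho(C) = \rho(\xi^a)\rho(\xi^a) \in \End(V)$ is the image of the Casimir, and then to read off all three assertions from the spectral properties of $\rho(C)$.

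First I would dispose of the claim that $T(\nu)$ respects the isotypic decomposition. This is \emph{not} because $T(\nu)$ is $G$-equivariant — in general it is not, since $\nu$ need not be a class function — but simply because each isotypic component $V_{\equivClass{\tau}}$ is a $G$-submodule, so $\rho(g)$ restricts to an automorphism of $V_{\equivClass{\tau}}$ for every $g \in G$. Consequently $T(\mu) = \int_G \rho(g)\,\mu(g)\dif g$ restricts to $V_{\equivClass{\tau}}$ for an arbitrary (signed or unsigned) weight $\mu$, in particular for $\nu$ and for $\Delta\nu$, because the integral of a function valued in the linear subspace $V_{\equivClass{\tau}}$ again lies in $V_{\equivClass{\tau}}$.

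Next I would carry out the key computation: differentiating a matrix coefficient $g \mapsto \rho(g)_{ij}$ along the left-invariant vector field $\xi^a$ twice and summing over $a$ gives, exactly as in the computation around~\eqref{eq:EigenvalueK-OperatorGeneral:calculation} but now \emph{without} assuming $\rho$ irreducible, $\Delta\bigl(\rho(\cdot)_{ij}\bigr) = \bigl(\rho(\cdot)\,\rho(C)\bigr)_{ij}$. Applying \cref{prop:IntegrationByParts} with the single function $F_1 = \rho(\cdot)_{ij}$ (so that the double sum on the right-hand side is empty) yields $\int_G \Delta\bigl(\rho(g)_{ij}\bigr)\,\nu\dif g = \int_G \rho(g)_{ij}\,\Delta\nu\dif g$, that is,
\begin{equation}
	T(\Delta\nu) = T(\nu)\,\rho(C).
\end{equation}
Since $C$ lies in the centre of $\mathrm{U}(\LieA g)$, the operator $\rho(C)$ is $G$-equivariant; as it is moreover self-adjoint with respect to a $G$-invariant inner product on $V$ (for real $\rho$ one takes, as before, a real invariant inner product), Schur's lemma forces $\rho(C)$ to act on each irreducible subrepresentation as a real scalar, the Casimir eigenvalue, which depends only on the isomorphism class of that irreducible. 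Hence $\rho(C)$ equals a constant $c_{\equivClass{\tau}} \in \R$ on all of $V_{\equivClass{\tau}}$, and restricting the displayed identity to $V_{\equivClass{\tau}}$ gives $c_{\equivClass{\tau}}\,T(\nu)_{|V_{\equivClass{\tau}}} = T(\Delta\nu)_{|V_{\equivClass{\tau}}}$.

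It then remains to pin down the sign of $c_{\equivClass{\tau}}$ and the vanishing criterion. Realizing $\tau$ unitarily, each $\tau(\xi^a)$ is skew-Hermitian, so $\tau(C) = \sum_a \tau(\xi^a)^2 = -\sum_a \tau(\xi^a)^*\tau(\xi^a)$ is negative semidefinite; taking the normalized trace gives $c_{\equivClass{\tau}} = -\frac{1}{\dim\tau}\sum_a \lVert \tau(\xi^a)\rVert_{\mathrm{HS}}^2 \le 0$, with equality if and only if $\tau(\xi^a) = 0$ for every $a$, i.e.\ if and only if the induced Lie algebra representation vanishes. Because $G$ is connected, this is equivalent to $\tau$ being the trivial representation. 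This last equivalence is the only place where connectedness enters and essentially the only step that is not routine bookkeeping around $T(\Delta\nu) = T(\nu)\,\rho(C)$; without connectedness one would only be able to conclude triviality on the identity component, so the hypothesis cannot simply be dropped.
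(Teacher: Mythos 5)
Your proof is correct, and its engine is the same as the paper's: you apply \cref{prop:IntegrationByParts} with a single function to the matrix coefficients, use that \( \Delta \rho(\cdot)_{ij} = \bigl(\rho(\cdot)\rho(C)\bigr)_{ij} \) holds without any irreducibility assumption, obtain the operator identity \( T(\Delta\nu) = T(\nu)\rho(C) \), and note that invariance of each isotypic component under every \( \rho(g) \) makes \( T(\mu) \) block-diagonal (your side remark that this is not a consequence of equivariance of \( T(\nu) \) is accurate). Where you genuinely diverge is in establishing the spectral facts about \( \rho(C) \). The paper quotes Bourbaki (Proposition IX.7.6.4) for complex irreducibles --- scalar action, non-positivity, vanishing exactly in the trivial type, dependence only on the highest weight --- and treats real irreducibles by complexifying and invoking the classification of \( V^\C \) as irreducible, \( U \oplus \bar U \), or \( U \oplus U \). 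You instead argue directly and uniformly: conjugation-invariance of \( C \) (coming from the \( \AdAction_G \)-invariance of \( \kappa \)) together with self-adjointness of \( \rho(C) \) for an invariant inner product forces a single real eigenvalue on each irreducible summand (the eigenspace argument is what saves you in the real case, where plain Schur only gives a division algebra as commutant); non-positivity follows from \( \tau(C) = -\sum_a \tau(\xi^a)^* \tau(\xi^a) \) and the trace formula; and \( c_{\equivClass{\tau}} = 0 \) iff the Lie algebra acts trivially, which by connectedness is equivalent to triviality of \( \tau \). Your route buys a self-contained, elementary proof that avoids both highest-weight theory and the complexification case analysis, while the paper's route buys brevity and, through the highest-weight formula, a more quantitative description of \( c_{\equivClass{\tau}} \); your explicit identification of where connectedness is needed is a point the paper leaves implicit.
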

\begin{proof}
	\Cref{prop:IntegrationByParts} applied to the matrix coefficients \( g_{ij} = \rho(g)_{ij} \) yields
	\begin{equation}
		\int_G g_{ij} \, \Delta \nu(g) \, \dif g
			= \int_G \Delta g_{ij} \, \nu(g) \, \dif g
			= K_{lkkj} \int_G g_{il} \, \nu(g) \, \dif g,
	\end{equation}
	where the second equality follows from~\eqref{eq:EigenvalueK-OperatorGeneral:calculation}.
	Rewriting this equality in terms of operators gives \( T(\Delta \nu) = T(\nu) \rho(C) \) with \( \rho(C) \) being the Casimir invariant.
	
	By going back to their definition, \( T(\nu) \) and \( \rho(C) \) respect the decomposition of \( V \) into irreducible representations and so also the isotypic decomposition.
	We have to show that the Casimir invariant \( \rho(C) \) acts as a scalar multiple of the identity on each irreducible component \( V_\tau \) and that the corresponding eigenvalue \( c_\tau \) is real and non-positive.
	If \( \tau \) is a complex representation, this is exactly \parencite[Proposition~IX.7.6.4]{Bourbaki2005}.
	Moreover, \( c_\tau = 0 \) if and only if \( \tau \) is the trivial representation.
	By the same proposition, \( c_\tau \) can be expressed in terms of the highest weight associated with \( \tau \) and so it only depends on the isomorphism type of the representation \( \tau \).
	
	Now, for an irreducible real representation \( \tau: G \to \End(V_\tau) \), we can pass to its complexification \( \tau^\C: G \to \End(V^\C_\tau) \).
	Clearly, the complex-linear extension of the Casimir \( \tau(C) \) is the Casimir \( \tau^\C(C) \) of the complexified representation.
	By \parencite[Theorem~6.3 and Proposition~6.6]{BrockerDieck1985}, the representation \( V^\C \) is either irreducible or a direct sum of the form \( U \oplus \bar{U} \) or \( U \oplus U \) for an irreducible complex representation \( U \).
	In either case, the above argument shows that the Casimir \( \tau^\C(C) \) acts as a scalar multiplication, because the Casimir eigenvalue of the complex conjugate representation \( \bar{U} \) is the same as the one of the representation \( U \).
	By restricting to the real part, we conclude that \( \tau(C) \) is a scalar multiple of the identity.
	This finishes the proof.
\end{proof}
\begin{corollary}[Haar measure]
	\label{prop:matrixCoefficients:haar}
	Let \( G \) be a compact connected Lie group.
	For a real or complex representation \( \rho: G \to \GL(V) \) of \( G \), the operator \( T(\nu = 1): V \to V \) defined in~\eqref{eq:matrixCoefficients:definition} is the projection onto \( V^G \) along the isotypic decomposition~\eqref{eq:matrixCoefficients:isotypicDecomp}.
\end{corollary}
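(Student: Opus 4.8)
The plan is to obtain this as an immediate specialization of \cref{prop:matrixCoefficients:general} to the constant density $\nu = 1$. First I would note that $\Delta 1 = 0$, hence $T(\Delta \nu) = T(0) = 0$. Substituting into the relation $c_{\equivClass{\tau}} \, T(\nu)_{|V_{\equivClass{\tau}}} = T(\Delta \nu)_{|V_{\equivClass{\tau}}}$ furnished by \cref{prop:matrixCoefficients:general} yields $c_{\equivClass{\tau}} \, T(1)_{|V_{\equivClass{\tau}}} = 0$ on every isotypic component. Since that same proposition tells us $c_{\equivClass{\tau}} = 0$ precisely when $\tau$ is trivial, we have $c_{\equivClass{\tau}} \neq 0$, and therefore $T(1)_{|V_{\equivClass{\tau}}} = 0$, for every non-trivial $\tau$.

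It then remains to compute $T(1)$ on the isotypic component of the trivial representation, which is by definition the space $V^G$ of invariants. On $V^G$ the operator $\rho(g)$ equals the identity for all $g \in G$, so $T(1)_{|V^G} = \int_G \id_{V^G} \, \dif g = \id_{V^G}$, where we use that the Haar measure has total mass one. Since $T(1)$ respects the isotypic decomposition~\eqref{eq:matrixCoefficients:isotypicDecomp} — again by \cref{prop:matrixCoefficients:general} — it acts as the identity on $V^G$ and as zero on the direct sum of all the remaining isotypic components, hence it is exactly the projection onto $V^G$ along~\eqref{eq:matrixCoefficients:isotypicDecomp}.

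I do not expect any real obstacle, since the substance has been folded into \cref{prop:matrixCoefficients:general}; the only additional inputs are $\Delta 1 = 0$ and the normalization of $\dif g$. For completeness one could also record the standard self-contained argument that bypasses \cref{prop:matrixCoefficients:general}: left-invariance of the Haar measure gives $\rho(h) \, T(1) = \int_G \rho(hg) \, \dif g = T(1)$ for all $h \in G$, so $\im T(1) \subseteq V^G$; combining this with $T(1)_{|V^G} = \id_{V^G}$ shows $T(1)^2 = T(1)$ with image $V^G$, i.e.\ $T(1)$ is the claimed projection. The first route is, however, the shortest given what is already available.
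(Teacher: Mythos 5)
Your proposal is correct and follows essentially the same route as the paper: specialize \cref{prop:matrixCoefficients:general} to $\nu = 1$ using $\Delta 1 = 0$, conclude $T(1)$ vanishes on every non-trivial isotypic component since $c_{\equivClass{\tau}} \neq 0$ there, and observe it is the identity on $V^G$. The extra self-contained argument via left-invariance is fine but not needed.
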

\begin{proof}
	For \( \nu = 1 \), we have \( \Delta \nu = 0 \) and so \( c_{\equivClass{\tau}} T(1)_{|V_{\equivClass{\tau}}} = 0 \) for every irreducible subrepresentation \( \tau \).
	Because \( c_{\equivClass{\tau}} \) is strictly negative for non-trivial representations \( \tau \), the restriction \( T(1)_{|V_{\equivClass{\tau}}} \) has to vanish for such representations.
	Finally, the restriction of \( T(1) \) to \( V^G \) is clearly the identity operator.
\end{proof}

\begin{corollary}[Brownian motion]
	\label{prop:matrixCoefficients:brownian}
	Let \( G \) be a compact connected Lie group and let $\rho: G \to V$ be a real or complex representation of \( G \).
	The expectation value of the \( \GL(V) \)-valued random variable \( \rho \) relative to the Riemannian Brownian motion \( (g_t)_{t \geq 0} \) respects the isotypic decomposition~\eqref{eq:matrixCoefficients:isotypicDecomp} and satisfies
	\begin{equation}
		\label{eq:matrixCoefficients:brownianMotion}
		\Expect\bigl(\rho(g_t)\bigr)_{|V_{\equivClass{\tau}}} = \exp\Bigl(\frac{1}{2} c_{\equivClass{\tau}} t\Bigr) \id_{|V_{\equivClass{\tau}}}.
	\end{equation}
	for each irreducible subrepresentation \( \tau \), where \( c_{\equivClass{\tau}} \in \R \) are the same non-positive constants as in \cref{prop:matrixCoefficients:general}.
	Equivalently, \( \Expect\bigl(\rho(g_t)\bigr) = \exp\Bigl(\frac{t}{2} \rho(C) \Bigr) \).
	Moreover,
	\begin{equation}
		\lim_{t \to \infty}\Expect\bigl(\rho(g_t)\bigr) = T(1).
		\qedhere
	\end{equation}
\end{corollary}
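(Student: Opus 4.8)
The plan is to express the expectation through the heat kernel and then feed it into \cref{prop:matrixCoefficients:general}. First I would note that, by the description of the Brownian semigroup recalled above with starting point $g_0 = e$, we have $\Expect\bigl(\rho(g_t)\bigr) = \int_G \rho(a)\, p_t(a)\dif a = T(p_t)$, so the assertion is really a statement about the operator-valued curve $t \mapsto T(p_t)$. Applying \cref{prop:matrixCoefficients:general} with $\nu = p_t$ then shows that $T(p_t)$ respects the isotypic decomposition~\eqref{eq:matrixCoefficients:isotypicDecomp} for each fixed $t$ and that $c_{\equivClass{\tau}}\, T(p_t)_{|V_{\equivClass{\tau}}} = T(\Delta p_t)_{|V_{\equivClass{\tau}}}$ for every irreducible subrepresentation $\tau$.

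Next I would convert this into an ordinary differential equation. Differentiating $T(p_t) = \int_G \rho(a)\, p_t(a)\dif a$ in $t$ under the integral sign and using the heat equation~\eqref{eq:brownianMotion:heatEquation} in the form $\Delta p_t = 2\,\partial_t p_t$ gives $T(\Delta p_t) = 2\,\frac{\dif}{\dif t} T(p_t)$. Combined with the previous identity, the curve $A(t) := T(p_t)_{|V_{\equivClass{\tau}}}$ satisfies $2\,A'(t) = c_{\equivClass{\tau}}\, A(t)$, hence $A(t) = \exp\bigl(\tfrac{1}{2} c_{\equivClass{\tau}} t\bigr)\, A(0^+)$. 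The initial value is $A(0^+) = \id_{|V_{\equivClass{\tau}}}$: indeed $p_t \to \delta_e$ as $t \to 0$ and $a \mapsto \rho(a)$ is continuous, so $T(p_t) \to \rho(e) = \id$. This proves~\eqref{eq:matrixCoefficients:brownianMotion}. Since $\rho(C)$ acts on $V_{\equivClass{\tau}}$ as the scalar $c_{\equivClass{\tau}}$ (again by \cref{prop:matrixCoefficients:general}), reassembling the isotypic components yields the equivalent closed form $\Expect\bigl(\rho(g_t)\bigr) = \exp\bigl(\tfrac{t}{2}\rho(C)\bigr)$.

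A slicker route, which avoids interchanging limits, is to use that the infinitesimal generator of $(g_t)$ is $\tfrac{1}{2}\Delta$. Applying it to the matrix coefficients $\rho(\cdot)_{ij}$, which are eigenfunctions of $\Delta$ with $\Delta \rho(g)_{ij} = \bigl(\rho(g)\rho(C)\bigr)_{ij}$ by~\eqref{eq:EigenvalueK-OperatorGeneral:calculation}, yields the matrix ODE $\frac{\dif}{\dif t}\Expect\bigl(\rho(g_t)\bigr) = \tfrac{1}{2}\Expect\bigl(\rho(g_t)\bigr)\rho(C)$; since $g_0 = e$ supplies the initial condition $\Expect\bigl(\rho(g_0)\bigr) = \id$, this integrates immediately to $\Expect\bigl(\rho(g_t)\bigr) = \exp\bigl(\tfrac{t}{2}\rho(C)\bigr)$, and everything else follows as above.

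It remains to compute the limit. By \cref{prop:matrixCoefficients:general} one has $c_{\equivClass{\tau}} < 0$ for every non-trivial $\tau$ and $c_{\equivClass{\tau}} = 0$ for the trivial representation, so $\exp\bigl(\tfrac{1}{2} c_{\equivClass{\tau}} t\bigr) \to 0$ on each non-trivial isotypic component and stays $\equiv 1$ on $V^G$ as $t \to \infty$. Hence $\lim_{t \to \infty}\Expect\bigl(\rho(g_t)\bigr)$ is the projection onto $V^G$ along the isotypic decomposition, which by \cref{prop:matrixCoefficients:haar} is exactly $T(1)$. The only genuinely non-formal points are the differentiation under the integral sign and the identification of the $t \to 0$ initial value $T(p_t) \to \id$; both are immediate from the smoothness and approximate-identity properties of the heat kernel stated in~\eqref{eq:brownianMotion:heatEquation}, and the generator-based variant circumvents the first of them entirely, so I do not anticipate a substantial obstacle here.
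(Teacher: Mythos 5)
Your proof is correct and follows essentially the same route as the paper: identify \( \Expect\bigl(\rho(g_t)\bigr) = T(p_t) \), apply \cref{prop:matrixCoefficients:general} together with the heat equation to get the ODE \( 2\,\frac{\dif}{\dif t} T(p_t)_{|V_{\equivClass{\tau}}} = c_{\equivClass{\tau}} T(p_t)_{|V_{\equivClass{\tau}}} \) with initial value \( \id \), and conclude the limit from the strict negativity of \( c_{\equivClass{\tau}} \) for non-trivial \( \tau \) plus \cref{prop:matrixCoefficients:haar}. The generator-based variant you mention is a harmless alternative to justify the differentiation step, but it does not change the substance of the argument.
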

\begin{proof}
	By definition, \( \Expect\bigl(\rho(g_t)\bigr) = T(p_t) \) with \( p_t \) being the heat density.
	Using the heat equation~\eqref{eq:brownianMotion:heatEquation}, \cref{prop:matrixCoefficients:general} implies that the expectation value satisfies
	\begin{equation}
		c_{\equivClass{\tau}} \Expect\bigl(\rho(g_t)\bigr)_{|V_{\equivClass{\tau}}} = 2 \frac{\dif}{\dif t} \Expect\bigl(\rho(g_t)\bigr)_{|V_{\equivClass{\tau}}}.
	\end{equation}
	For the initial condition, note that \( \lim_{t \to 0} \Expect\bigl(\rho(g_t)\bigr) = \rho(e) = \id_V \).
	This shows that the expectation value is given by~\eqref{eq:matrixCoefficients:brownianMotion}.
	Since \( c_{\equivClass{\tau}} \) are the eigenvalues of the Casimir operator, we get \( \Expect\bigl(\rho(g_t)\bigr) = \exp\Bigl(\frac{t}{2} \rho(C) \Bigr) \).

	For a non-trivial subrepresentation \( \tau \), the constant \( c_{\equivClass{\tau}} \) is strictly negative and thus
	\( \Expect\bigl(\rho(g_t)\bigr)_{|V_{\equivClass{\tau}}} \) converges to \( 0 \) as \( t \to \infty \).
	On the other hand, \( \Expect\bigl(\rho(g_t)\bigr)_{|V^G} = \id_{V^G} \).
	Thus, in summary, \( \Expect\bigl(\rho(g_t)\bigr) \) converges to the projection onto \( V^G \).
\end{proof} 
In the case of the classical groups \( G = \UGroup(N), \OGroup(N), \SpGroup(N) \), the expectation value formula \( \Expect\bigl(\rho(g_t)\bigr) = \exp\Bigl(\frac{t}{2} \rho(C) \Bigr) \) has been obtained in \parencites[Proposition~2.4]{Levy2008}[Lemma~4.1 and~4.2]{Dahlqvist2017} using the explicit expression of the corresponding Casimir operator.
Moreover, the long time asymptotic behavior has been established in this case using a rather complicated calculation, \cf \parencite[Theorem~4.3 and Lemma~5.1]{Dahlqvist2017}.
In contrast, our proof shows that this is a direct and straight-forward consequence of the non-positivity of the spectrum of the Casimir.  

For the tensor representation, the following result shows that the isotypic decomposition and the constants \( c_{\equivClass{\tau}} \) can be obtained from an eigenvalue problem for an operator determined by the operator \( K \) defined in \cref{eq:K-Operator}.
In particular, the isotypic decomposition of the tensor representation on \( V^{\tensorProd n} \tensorProd (V^*)^{\tensorProd n'} \) for arbitrary integers \( n \) and \( n' \) is completely given in terms of data associated with the tensor representation on \( V \tensorProd V^* \).
This is particularly important for determining the decomposition in concrete examples using computer algebra systems.
\begin{proposition}
	\label{prop:matrixCoefficients:tensorRep}
	Let $G$ be a compact connected Lie group, and let $\varrho: G \to V$ be an irreducible representation of \( G \).
	Let $\lambda \in \R$ be the eigenvalue of the Casimir invariant \( \varrho(C) \).
	For non-negative integers \( n \) and \( n' \), the isotypic decomposition of the tensor representation \( \varrho^{\tensorProd n, \tensorProd n'} = \varrho(g)^{\tensorProd n} \tensorProd \bigl(\varrho(g^{-1})^*\bigr)^{\tensorProd n'} \) on \( V^{\tensorProd n} \tensorProd (V^*)^{\tensorProd n'} \) coincides with the eigenspace decomposition of the operator
	\begin{equation}\begin{split}
		C_{\mathbf{i}\mathbf{j}} &= (n + n') \lambda \, \delta_{\mathbf{ij}}
			\\
			&\quad - 2 \sum_{\substack{r, s = 1 \\ r < s}}^n K_{i_r j_r i_s j_s} \delta_{i_1 j_1} \dotsm \hat{r} \dotsm \hat{s} \dotsm \delta_{i_n j_n} \delta_{i' j'}
			\\
			&\quad - 2 \sum_{\substack{r, s = 1 \\ r < s}}^{n'} K_{j'_r i'_r j'_s i'_s} \delta_{i'_1 j'_1} \dotsm \hat{r} \dotsm \hat{s} \dotsm \delta_{i'_{n'} j'_{n'}} \delta_{i j}
			\\
			&\quad + 2 \sum_{r = 1}^n \sum_{s = 1}^{n'} K_{i^{}_r j^{}_r j'_s i'_s} \delta_{i^{}_1 j^{}_1} \dotsm \hat{r} \dotsm \delta_{i^{}_n j^{}_n} \delta_{i'_1 j'_1} \dotsm \hat{s} \dotsm \delta_{i'_{n'} j'_{n'}}.
	\end{split}\end{equation}
	Moreover, the constants \( c_{\equivClass{\tau}} \) of \cref{prop:matrixCoefficients:general} are equal to the corresponding eigenvalues of \( C \).
\end{proposition}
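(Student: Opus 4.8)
The plan is to recognise the operator $C$ displayed above as the coordinate expression of the Casimir invariant of the tensor representation $\varrho^{\tensorProd n,\tensorProd n'}$ on $V^{\tensorProd n}\tensorProd(V^*)^{\tensorProd n'}$, and then to deduce the statement from \cref{prop:matrixCoefficients:general} together with Schur's lemma.

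First I would apply \cref{prop:matrixCoefficients:general} to the representation $\rho = \varrho^{\tensorProd n,\tensorProd n'}$. Its proof establishes $T(\Delta\nu) = T(\nu)\,\rho(C)$, where $\rho(C)$ is the Casimir invariant of the tensor representation, and shows that $\rho(C)$ respects the isotypic decomposition, acting on the component $V_{\equivClass{\tau}}$ as multiplication by the real, non-positive scalar $c_{\equivClass{\tau}}$, which depends only on the isomorphism type of $\tau$. In particular $\rho(C)$ is diagonal in the isotypic decomposition, so the eigenspace decomposition of $\rho(C)$ is a coarsening of the isotypic decomposition -- the two coinciding precisely when the Casimir separates the isomorphism types occurring in $V^{\tensorProd n}\tensorProd(V^*)^{\tensorProd n'}$ -- and on each isotypic component $\rho(C)$ acts by the scalar $c_{\equivClass{\tau}}$. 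Everything then reduces to identifying the displayed operator $C$ with $\rho(C) = \varrho^{\tensorProd n,\tensorProd n'}(C)$ in coordinates.

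Second I would compute this Casimir via the Leibniz rule. The Lie algebra acts on $V^{\tensorProd n}\tensorProd(V^*)^{\tensorProd n'}$ by letting $\xi$ act as $\varrho(\xi)$ on each copy of $V$ and as $\varrho^*(\xi)$ on each copy of $V^*$, one slot at a time. Squaring this sum of single-slot operators and summing over an orthonormal basis $\{\xi^a\}$ of $\LieA{g}$ -- using that operators acting on distinct tensor slots commute -- produces one ``diagonal'' term, namely $\varrho(C)$ or $\varrho^*(C)$ on the respective slot, for each of the $n+n'$ slots, plus twice the sum over unordered pairs of distinct slots of the split-Casimir contraction (the operator $K$ of \eqref{eq:K-Operator}, together with its dualisations for $V^*$-slots) on those two slots. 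By Schur's lemma each $\varrho(C)$ acts as $\lambda\,\id_V$ and $\varrho^*(C)$ as $\lambda\,\id_{V^*}$, since the dual representation carries the same Casimir eigenvalue; together these give the term $(n+n')\lambda\,\delta_{\mathbf{ij}}$. For a pair of $V$-slots the cross term is $K_{ijkl} = \xi^a_{ij}\xi^a_{kl}$ of \eqref{eq:K-OperatorCoords}; for a pair of $V^*$-slots one uses $\varrho^*(\xi)_{ij} = -\varrho(\xi)_{ji}$, and for a mixed pair the correspondingly dualised contraction. Reading these off in the index conventions of \eqref{eq:MatrixCoefficients:relationWithTensorProdRep}, in which each $V^*$-slot $s$ carries its index pair in the order $(j'_s,i'_s)$, reproduces exactly the three $K$-sums of the displayed operator $C$. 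Alternatively -- and this is the route suggested by the introduction -- one may avoid the Casimir computation and apply \cref{prop:IntegrationByParts} directly to the product of matrix coefficients $\prod_r g_{i_r j_r}\prod_s g^{-1}_{j'_s i'_s}$: by the Peter--Weyl theorem each factor is an eigenfunction of $\Delta$ with eigenvalue $\lambda$, while the pairwise terms $\scalarProd{\dif g_{\cdot}}{\dif g_{\cdot}}$ are precisely the $K$-contractions obtained from \eqref{eq:MatrixElementsDifferential} and \eqref{eq:K-OperatorCoords} (compare the merging formula \eqref{eq:MergingRulesGenerally}); the resulting eigenvalue equation relating $T(\Delta\nu)$ to $T(\nu)$ exhibits $C$ as the intertwiner.

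Third, once $C = \varrho^{\tensorProd n,\tensorProd n'}(C)$ is established, the assertions of the proposition follow immediately from the first step. I expect the only real obstacle to be the index and sign bookkeeping for the $V^*$-slots in the second step -- fixing the placement of the primed indices inside each $K$-tensor, and the sign of each of the three $K$-sums, consistently with the conventions of \eqref{eq:MatrixCoefficients:relationWithTensorProdRep}, \eqref{eq:K-OperatorCoords} and \eqref{eq:MatrixElementsDifferential}. This is a routine, if slightly fiddly, computation; all the conceptual content is already contained in \cref{prop:matrixCoefficients:general} and Schur's lemma.
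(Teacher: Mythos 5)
Your proposal is correct and takes essentially the same route as the paper: one identifies the displayed operator with the Casimir \( \varrho^{\tensorProd n,\tensorProd n'}(C) \) of the tensor representation, computes its coordinates via the Leibniz expansion into diagonal Casimir terms and pairwise split-Casimir (\( K \)) cross terms, and then invokes \cref{prop:matrixCoefficients:general}. Your side remark that the Casimir eigenspace decomposition is a priori only a coarsening of the isotypic decomposition (coinciding when the Casimir separates the occurring isomorphism types) is a fair caveat that the paper's statement glosses over, but it does not affect the substance of the argument.
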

\begin{proof}
	As discussed above, the isotypic decomposition coincides with the eigenspace decomposition of the Casimir element \( \varrho^{\tensorProd n, \tensorProd n'}(C) \).
	To calculate the components \( C_{\mathbf{i j}} \) of this operator, note that 
	\begin{equation}
		\xi_{\mathbf{i} \mathbf{j}}	= 
		\sum_{r = 1}^n \xi_{i^{}_r j^{}_r} \delta_{i^{}_1 j^{}_1} \dotsm \hat{r} \dotsm \delta_{i^{}_n j^{}_n} \delta_{i' j'}
		- \sum_{r = 1}^{n'} \xi_{j'_r i'_r} \delta_{i j} \delta_{i'_1 j'_1} \dotsm \hat{r} \dotsm  \delta_{i'_{n'} j'_{n'}}
	\end{equation}
	for \( \xi \in \LieA{g} \).
	Consequently, the operator $K$ (see \cref{eq:K-Operator}) for the tensor representation \( \varrho^{\tensorProd n, \tensorProd n'} \) takes the form
	\begin{equation}\begin{split}
		K_{\mathbf{i j k l}} &=
			\sum_{r, s = 1}^n K_{i_r j_r k_s l_s} \delta_{i^{}_1 j^{}_1} \dotsm \hat{r} \dotsm \delta_{i^{}_n j^{}_n} \delta_{k^{}_1 l^{}_1} \dotsm \hat{s} \dotsm \delta_{k^{}_n l^{}_n} \delta_{i' j'} \delta_{k' l'} 
			\\
			&\quad + \sum_{r, s = 1}^{n'} K_{j'_r i'_r l'_s k'_s} \delta_{i'_1 j'_1} \dotsm \hat{r} \dotsm \delta_{i'_{n'} j'_{n'}} \delta_{k'_1 l'_1} \dotsm \hat{s} \dotsm \delta_{k'_{n'} l'_{n'}} \delta_{i j} \delta_{k l}
			\\
			&\quad - \sum_{r = 1}^n \sum_{s = 1}^{n'} K_{i^{}_r j^{}_r l'_s k'_s} \delta_{i^{}_1 j^{}_1} \dotsm \hat{r} \dotsm \delta_{i^{}_n j^{}_n} \delta_{k'_1 l'_1} \dotsm \hat{s} \dotsm \delta_{k'_{n'} l'_{n'}} \delta_{i' j'} \delta_{k l} 
			\\
			&\quad - \sum_{r = 1}^{n'} \sum_{s = 1}^{n} K_{j'_r i'_r k^{}_s l^{}_s} \delta_{i'_1 j'_1} \dotsm \hat{r} \dotsm \delta_{i'_{n'} j'_{n'}} \delta_{k^{}_1 l^{}_1} \dotsm \hat{s} \dotsm \delta_{k^{}_n l^{}_n} \delta_{i j} \delta_{k' l'} 
	\end{split}\end{equation}
	Our objective is to calculate the Casimir element \( C_{\mathbf{i j}} = K_{\mathbf{i k k j}} \), with implicit summation over \( \mathbf{k} \) understood.
	For this purpose, notice that, for each \( r \neq s \) and with summation over \( k \), we have
	\begin{equation}
		K_{i_r k_r k_s j_s} \delta_{i_1 k_1} \dotsm \hat{r} \dotsm \delta_{i_n k_n} \delta_{k_1 j_1} \dotsm \hat{s} \dotsm \delta_{k_n j_n}
			= K_{i_r j_r i_s j_s} \delta_{i_1 j_1} \dotsm \hat{r}, \hat{s} \dotsm \delta_{i_n j_n}.
	\end{equation}
	On the other hand, for \( r = s \), we obtain
	\begin{equation}
		K_{i_r k_r k_r j_r} \delta_{i_1 k_1} \dotsm \hat{r} \dotsm \delta_{i_n k_n} \delta_{k_1 j_1} \dotsm \hat{r} \dotsm \delta_{k_n j_n}
			= \lambda \delta_{ij}.
	\end{equation}
	Using these and similar identities in each of the four summands yields
	\begin{equation}\begin{split}
		C_{\mathbf{i j}} &= K_{\mathbf{i k k j}}
			\\	
			&=
			\sum_{\substack{r, s = 1 \\ r \neq s}}^n K_{i_r j_r i_s j_s} \delta_{i_1 j_1} \dotsm \hat{r}, \hat{s} \dotsm \delta_{i_n j_n} \delta_{i' j'} + n \lambda \delta_{ij} \delta_{i' j'}
			\\
			&\quad + \sum_{\substack{r, s = 1 \\ r \neq s}}^{n'} K_{j'_r i'_r j'_s i'_s} \delta_{i'_1 j'_1} \dotsm \hat{r}, \hat{s} \dotsm \delta_{i'_{n'} j'_{n'}} \delta_{i j} + n' \lambda \delta_{ij} \delta_{i' j'}
			\\
			&\quad - \sum_{r = 1}^n \sum_{s = 1}^{n'} \bigl( K_{i^{}_r j^{}_r j'_s i'_s} + K_{j'_s i'_s i^{}_r j^{}_r}  \bigr)\delta_{i^{}_1 j^{}_1} \dotsm \hat{r} \dotsm \delta_{i^{}_n j^{}_n} \delta_{i'_1 j'_1} \dotsm \hat{s} \dotsm \delta_{i'_{n'} j'_{n'}}
			\\
			&=
			(n + n') \lambda \, \delta_{\mathbf{ij}}
			\\
			&\quad + 2 \sum_{\substack{r, s = 1 \\ r < s}}^n K_{i_r j_r i_s j_s} \delta_{i_1 j_1} \dotsm \hat{r} \dotsm \hat{s} \dotsm \delta_{i_n j_n} \delta_{i' j'}
			\\
			&\quad + 2 \sum_{\substack{r, s = 1 \\ r < s}}^{n'} K_{j'_r i'_r j'_s i'_s} \delta_{i'_1 j'_1} \dotsm \hat{r} \dotsm \hat{s} \dotsm \delta_{i'_{n'} j'_{n'}} \delta_{i j}
			\\
			&\quad - 2 \sum_{r = 1}^n \sum_{s = 1}^{n'} K_{i^{}_r j^{}_r j'_s i'_s} \delta_{i^{}_1 j^{}_1} \dotsm \hat{r} \dotsm \delta_{i^{}_n j^{}_n} \delta_{i'_1 j'_1} \dotsm \hat{s} \dotsm \delta_{i'_{n'} j'_{n'}}.
	\end{split}\end{equation}
	This finishes the proof.
\end{proof}

In applications, one can often use invariant theory to obtain a spanning set for the space of invariants \( \bigl(V^{\tensorProd n} \tensorProd (V^*)^{\tensorProd n'}\bigr)^G \).
This is well-studied for the classical groups, see \parencite{GoodmanWallach2009}, and also for some exceptional groups, see for example \parencite{Schwarz1988} for the group \( G = G_2\) and its 7-dimensional irreducible representation.
The following theorem shows that such a spanning set is already enough to calculate the operator \( T \) for the Haar measure (\( \nu = 1 \)).
This generalizes the main results of \parencite{Collins2002,CollinsSniady2006} for the classical groups \( G = \UGroup(N), \OGroup(N), \SpGroup(N) \) to arbitrary compact Lie groups.
\begin{theorem}
	\label{prop:matrixCoefficients:tensorRepSchur}
	Let $G$ be a compact Lie group and let $\rho: G \to V$ be a finite-dimensional real or complex representation of \( G \) leaving the inner product \( \scalarProd{\cdot}{\cdot} \) on \( V \) invariant.
	Let \( \mathcal{A} \) be a finite-dimensional inner product space over the same field as \( V \) and let \( \tau: \mathcal{A} \to V^{\tensorProd n} \tensorProd (V^*)^{\tensorProd n'} \) be a linear map.
	Denote by \( \tau^*: V^{\tensorProd n} \tensorProd (V^*)^{\tensorProd n'} \to \mathcal{A} \) the adjoint of \( \tau \) with respect to the following inner product\footnotemark{} on \( V^{\tensorProd n} \tensorProd (V^*)^{\tensorProd n'} \):
	\footnotetext{Under the identification \( V^{\tensorProd n} \tensorProd (V^*)^{\tensorProd n'} \isomorph \Hom\bigl(V^{\tensorProd n'}, V^{\tensorProd n}\bigr) \), this inner product corresponds to the inner product
	\begin{equation}
		\scalarProd{S_1}{S_2} = \tr\bigl(S_2^* S_1\bigr), \qquad S_1, S_2 \in \Hom\bigl(V^{\tensorProd n'}, V^{\tensorProd n}\bigr).
	\end{equation}
	}
	\begin{equation}
		\scalarProd{v \tensorProd \alpha}{w \tensorProd \beta} = \scalarProd{v_1}{w_1} \dotsm \scalarProd{v_n}{w_n} \, \scalarProd{\alpha_1}{\beta_1} \dotsm \scalarProd{\alpha_{n'}}{\beta_{n'}}.
	\end{equation}
	There exists a unique map \( \mathrm{Wg}: \mathcal{A} \to \mathcal{A} \) satisfying the following properties:
	\begin{enumerate}
		\item \( \tau^* \circ \tau \circ \mathrm{Wg} \circ \tau^* \circ \tau = \tau^* \circ \tau \),
		\item \( \mathrm{Wg} \circ \tau^* \circ \tau \circ \mathrm{Wg} = \mathrm{Wg} \),
		\item \( \mathrm{Wg}^* \circ \tau^* \circ \tau = \tau^* \circ \tau \circ \mathrm{Wg} \),
		\item \( \tau^* \circ \tau \circ \mathrm{Wg}^* = \mathrm{Wg} \circ \tau^* \circ \tau \).
	\end{enumerate}
	If the image of \( \tau \) is \( \bigl(V^{\tensorProd n} \tensorProd (V^*)^{\tensorProd n'}\bigr)^G \), then
	\begin{equation}
		\label{eq:matrixCoefficients:tensorRepSchur:T}
		T (1) = \tau \circ \mathrm{Wg} \circ \tau^*,
	\end{equation}
	with \( T \) defined as in~\eqref{eq:matrixCoefficients:definition} relative to the tensor representation \( \rho^{\tensorProd n, \tensorProd n'} \).
	In particular, the coefficients of \( T(1) \) with respect to an orthonormal basis of \( V \) are given by
	\begin{equation}
		\label{eq:matrixCoefficients:tensorRepSchur:TCoeff}
		T (1)_{\mathbf{ij}} = \sum_{k, l} \tau(a_k)_\mathbf{i} \, \overline{\tau(a_l)_\mathbf{j}} \, \scalarProd{\, \mathrm{Wg} (a_k)}{a_l},
	\end{equation}
	where \( \set{a_k} \) is an orthonormal basis of \( \mathcal{A} \).
\end{theorem}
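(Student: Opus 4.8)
\emph{Proof proposal.} The plan is to recognize the four conditions defining \( \mathrm{Wg} \) as the Penrose equations for the Moore--Penrose pseudoinverse of \( B := \tau^* \circ \tau \), and then to identify \( \tau \circ \mathrm{Wg} \circ \tau^* \) with the orthogonal projection onto \( \im \tau \), which by hypothesis is the \( G \)-invariant subspace computed by \( T(1) \).

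First I would set \( B := \tau^* \circ \tau \in \End(\mathcal{A}) \), which is self-adjoint and positive semidefinite for the inner product on \( \mathcal{A} \). Rewriting the four stated conditions, the first two read \( B\,\mathrm{Wg}\,B = B \) and \( \mathrm{Wg}\,B\,\mathrm{Wg} = \mathrm{Wg} \); the third, \( \mathrm{Wg}^* B = B\,\mathrm{Wg} \), says that \( B\,\mathrm{Wg} \) is self-adjoint, since \( (B\,\mathrm{Wg})^* = \mathrm{Wg}^* B^* = \mathrm{Wg}^* B \); and the fourth, \( B\,\mathrm{Wg}^* = \mathrm{Wg}\,B \), says that \( \mathrm{Wg}\,B \) is self-adjoint. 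These are precisely the four Penrose equations, so \( \mathrm{Wg} \) must be the Moore--Penrose pseudoinverse \( B^+ \) of \( B \), and conversely \( B^+ \) satisfies them. Existence and uniqueness of \( B^+ \) is the standard fact about pseudoinverses; for the self-adjoint operator \( B \) it is explicitly \( B^+ = \sum_i \lambda_i^{-1} P_i \), summed over the orthogonal spectral projections \( P_i \) onto the eigenspaces of \( B \) with nonzero eigenvalue \( \lambda_i \). This settles the first part of the theorem and, in passing, shows \( \mathrm{Wg}^* = \mathrm{Wg} \).

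Next I would show that \( P := \tau \circ \mathrm{Wg} \circ \tau^* \) is the orthogonal projection of \( V^{\tensorProd n} \tensorProd (V^*)^{\tensorProd n'} \) onto \( \im \tau \). It is self-adjoint because \( \mathrm{Wg}^* = \mathrm{Wg} \), and idempotent because \( P^2 = \tau \circ (\mathrm{Wg}\,B\,\mathrm{Wg}) \circ \tau^* = \tau \circ \mathrm{Wg} \circ \tau^* = P \) by the second Penrose equation; hence \( P \) is an orthogonal projection. Obviously \( \im P \subseteq \im \tau \), and the reverse inclusion follows by comparing dimensions: using cyclicity of the trace, the identity \( \ker(\tau^*\tau) = \ker\tau \), and the fact that \( B^+ B \) is the orthogonal projection onto \( \im B = (\ker\tau)^\perp \), one gets \( \dim \im P = \tr P = \tr(\mathrm{Wg}\,B) = \tr(B^+ B) = \dim (\ker\tau)^\perp = \dim \im \tau \), so \( \im P = \im \tau \). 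On the other hand, \( T(1) = \int_G \rho^{\tensorProd n, \tensorProd n'}(g)\,\dif g \) is idempotent with image \( \bigl(V^{\tensorProd n} \tensorProd (V^*)^{\tensorProd n'}\bigr)^G \) by translation invariance of the Haar measure (it acts as the identity on invariants and maps every vector into the invariants; cf.\ \cref{prop:matrixCoefficients:haar}), and it is self-adjoint, since \( \rho \) — hence the tensor representation \( \rho^{\tensorProd n, \tensorProd n'} \) — preserves the chosen inner product, whence \( \rho^{\tensorProd n, \tensorProd n'}(g)^* = \rho^{\tensorProd n, \tensorProd n'}(g^{-1}) \) and \( T(1)^* = \int_G \rho^{\tensorProd n, \tensorProd n'}(g^{-1})\,\dif g = T(1) \) by inversion-invariance of the Haar measure. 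Thus \( T(1) \) is also the orthogonal projection onto \( \bigl(V^{\tensorProd n} \tensorProd (V^*)^{\tensorProd n'}\bigr)^G \), and since this subspace equals \( \im \tau \) by assumption, \( T(1) = P = \tau \circ \mathrm{Wg} \circ \tau^* \), which is~\eqref{eq:matrixCoefficients:tensorRepSchur:T}.

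Finally, the coordinate expression~\eqref{eq:matrixCoefficients:tensorRepSchur:TCoeff} follows from \( T(1) = \tau \circ \mathrm{Wg} \circ \tau^* \) by a direct expansion in the orthonormal basis \( \set{a_k} \) of \( \mathcal{A} \) and the induced orthonormal basis \( \set{e_{\mathbf{i}}} \) of \( V^{\tensorProd n} \tensorProd (V^*)^{\tensorProd n'} \): starting from \( \tau^*(e_{\mathbf{j}}) = \sum_l \overline{\tau(a_l)_{\mathbf{j}}}\, a_l \), inserting the resolution of the identity on \( \mathcal{A} \) twice and using \( \mathrm{Wg}^* = \mathrm{Wg} \) to transfer the Weingarten map onto a single basis vector, the asserted sum drops out. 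I do not expect a real obstacle anywhere: the substantive points are the identification of the four conditions with the Penrose equations and the recognition of \( T(1) \) as an orthogonal projection (which rests on the \( G \)-invariance of the inner product, equivalently on the orthogonality of the isotypic decomposition); the only delicate bookkeeping is the placement of complex conjugates in the last display, and the real-coefficient case goes through verbatim, since the pseudoinverse theory and all the identities above are valid over \( \R \) as well as \( \C \).
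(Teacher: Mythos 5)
Your proposal is correct and takes essentially the same route as the paper: recognize the four conditions as the Penrose equations, so that \( \mathrm{Wg} \) is the Moore--Penrose pseudoinverse of \( \tau^* \circ \tau \), show that \( \tau \circ \mathrm{Wg} \circ \tau^* \) is the orthogonal projection onto \( \im \tau \), and identify it with \( T(1) \), which by \cref{prop:matrixCoefficients:haar} and the invariance of the inner product is the orthogonal projection onto the invariants. The only cosmetic difference is that you establish the projection property of \( \tau \circ \mathrm{Wg} \circ \tau^* \) by a self-adjointness/idempotence argument plus a trace--dimension count, whereas the paper invokes the identity \( \tau^+ = (\tau^* \circ \tau)^+ \circ \tau^* \) and the standard fact that \( \tau \circ \tau^+ \) is the orthogonal projector onto \( \im \tau \).
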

For the fundamental representation \( \rho \) of \( G = \UGroup(N) \), as we will discuss in detail in \cref{sec:examples:ugroup}, a generating set of \( G \)-invariant elements of \( V^{\tensorProd n} \tensorProd (V^*)^{\tensorProd n} \) is given in terms of permutations.
That is, an orthonormal basis of \( \mathcal{A} \) is indexed by permutations \( \sigma \in S_n \) and the expression \( \scalarProd{\, \mathrm{Wg} (\sigma)}{\varsigma} \) in~\eqref{eq:matrixCoefficients:tensorRepSchur:TCoeff} recovers the so-called Weingarten function on \( S_n \).
For this reason, we will refer to \( \mathrm{Wg} \) as the Weingarten map for the group \( G \) (relative to \( \tau \)).
\begin{proof}
	Recall that the pseudoinverse (or Moore--Penrose inverse) of an operator \( A: H_1 \to H_2 \) between finite-dimensional inner product spaces is an operator \( A^+: H_2 \to H_1 \) satisfying
	\begin{enumerate}
		\item \( A A^+ A = A \),
		\item \( A^+ A A^+ = A^+ \),
		\item \( A A^+ \) and \( A^+ A \) are self-adjoint.
	\end{enumerate}
	It is well known that every operator has a unique pseudoinverse (in the finite-dimensional setting).
	Moreover, the pseudoinverse satisfies \( A^+ = (A^* A)^+ A^* \) and the operator \( A A^+: H_2 \to H_2 \) is the orthogonal projector onto the image of \( A \).

	Now the properties (1) to (4) entail that \( \mathrm{Wg} \) is the pseudoinverse of \( \tau^* \circ \tau \).
	In particular, such an operator \( \mathrm{Wg} \) exists and is uniquely defined by these properties.
	Moreover, \( \tau^+ = (\tau^* \circ \tau)^+ \circ \tau^* = \mathrm{Wg} \circ \tau^* \).
	Hence,
	\begin{equation}
		\tau \circ \tau^+ = \tau \circ \mathrm{Wg} \circ \tau^*
	\end{equation}
	is the orthogonal projector onto the image of \( \tau \), which is \( \bigl(V^{\tensorProd n} \tensorProd (V^*)^{\tensorProd n'}\bigr)^G \) by assumption.

	On the other hand, \cref{prop:matrixCoefficients:haar} shows that \( T(1) \) is the projector onto \( \bigl(V^{\tensorProd n} \tensorProd (V^*)^{\tensorProd n'}\bigr)^G \) along the isotypic decomposition.
	Since \( \rho \) leaves the inner product \( \scalarProd{\cdot}{\cdot} \) invariant, \( T(1) \) is easily seen to be self-adjoint.
	Thus, \( T(1) \) is an orthogonal projector onto \( \bigl(V^{\tensorProd n} \tensorProd (V^*)^{\tensorProd n'}\bigr)^G \) and thus coincides with \( \tau \circ \mathrm{Wg} \circ \tau^* \).
\end{proof}
\begin{remark}
	\label{rem:matrixCoefficients:weingartenByDiagonalizing}
	The proof shows that the Weingarten map \( \mathrm{Wg} \) is the pseudoinverse of \( \tau^* \circ \tau \).
	This observation can be used to calculate \( \mathrm{Wg} \) using one of the well-known constructions of a pseudoinverse.
	For example, one could exploit the fact that \( \tau^* \circ \tau \) is self-adjoint as follows.
	By the spectral theorem, we can write \( \tau^* \circ \tau = U D U^* \) for a unitary operator \( U \) and a diagonal matrix \( D \).
	Reordering the entries of \( D \), we may assume that \( D = \mathrm{diag}(\lambda_1, \dotsc, \lambda_k, 0, \dotsc, 0) \) where \( \lambda_k \in \R \) is non-zero.
	Then
	\begin{equation}
		\mathrm{Wg} = U \, \mathrm{diag}(\lambda_1^{-1}, \dotsc, \lambda_k^{-1}, 0, \dotsc, 0) \, U^*
	\end{equation}
	is the pseudoinverse of \( \tau^* \circ \tau \).
	For the fundamental representation of the classical groups \( G = \UGroup(N), \OGroup(N), \SpGroup(N) \), the decomposition \( \tau^* \circ \tau = U D U^* \) can be calculated using character theory of a certain associated finite group (the Schur--Weyl dual group).
	In this way, we recover the description \parencite[Proposition~2.3 and~3.10]{CollinsSniady2006} of the Weingarten map in these cases.
\end{remark}
Of course, expectation values of products of Wilson loops can be calculated, at least in principle, once all polynomials in matrix coefficients are known.
Thus, one could use \cref{prop:matrixCoefficients:general} for the tensor representation to establish the factorization \cref{prop:theoremA}.
On the other hand, \cref{prop:theoremA} applied to well-chosen Wilson loops yields \cref{prop:matrixCoefficients:general}, showing that these two theorems are hence equivalent.
The following remark explains this in more detail.
\begin{remark}
	For every \( 1 \leq i,j \leq \dim V \), let \( D: V \to V \) be defined by \( D_{pq} = \delta_{pj} \delta_{qi} \) relative to a chosen basis of \( V \).
	That is, \( D \) is the matrix whose only non-zero entry is in the \( j \)-th column in the \( i \)-th row.
	Then the generalized Wilson loop \( W_l(g) = \tr(D \varrho(g^{\pm 1})) \) evaluates to the matrix element \( W_l(g) = g^{\pm 1}_{ij} \).
	This construction shows that the prescriptions \( W_{l_k}(g) = g^{}_{i_k j_k} \) and \(  W_{l'_k}(g) = g^{-1}_{j'_k i'_k} \) define generalized Wilson loops.
	Using~\eqref{eq:MergingRulesGenerally}, the merging of two such loops is given by
	\begin{subequations}\begin{align}
		\mathcal{M}(l_r,l_s) &= + K_{p j_r q j_s} \, g^{}_{i_r p} \, g^{}_{i_s q}\, , \\
		\mathcal{M}(l_r,l'_s) &= - K_{p j_r j'_s q} \, g^{}_{i_r p} \, g^{-1}_{q i'_s}\, , \\
		\mathcal{M}(l'_r,l'_s) &= + K_{j'_r p j'_s q} \, g^{-1}_{p i'_r} \, g^{-1}_{q i'_s}\, .
	\end{align}\end{subequations}
	Thus \cref{prop:theoremA} (\cf, also \cref{rem:theoremA:forGeneralizedWilsonLoops}) implies
	\begin{equation}\begin{split}
		\lambda (n + n') \, & T_{ii' \, jj'}(\nu) - T_{ii' \, jj'}(\Delta \nu)
			=
			\\
			& - 2 \sum_{\substack{r, s = 1 \\ r < s}}^n K_{p j_r q j_s} T_{i i' \, (j_1 \dotso p \dotso q \dotso j_n) j'}(\nu) 
			\\
			&+ 2 \sum_{r = 1}^n \sum_{s = 1}^{n'} K_{p j_r j'_s q} T_{i i' \, (j_1 \dotso p \dotso j_n) (j'_1 \dotso q \dotso j'_n)}(\nu) 
			\\
			& - 2 \sum_{\substack{r, s = 1 \\ r < s}}^{n'} K_{j'_r p j'_s q} T_{i i' \, j (j'_1 \dotso p \dotso q \dotso j'_n)}(\nu),
	\end{split}\end{equation}
	where \( p \) and \( q \) always occur at the \( r \)-th and \( s \)-th position in the multi-indices, respectively.
	Comparing this equation with \cref{prop:matrixCoefficients:tensorRep} establishes \cref{prop:matrixCoefficients:general}.
\end{remark}

\section{Examples}
\label{sec:examples}

\begin{figure}
\centering
  \includegraphics[scale=0.72]{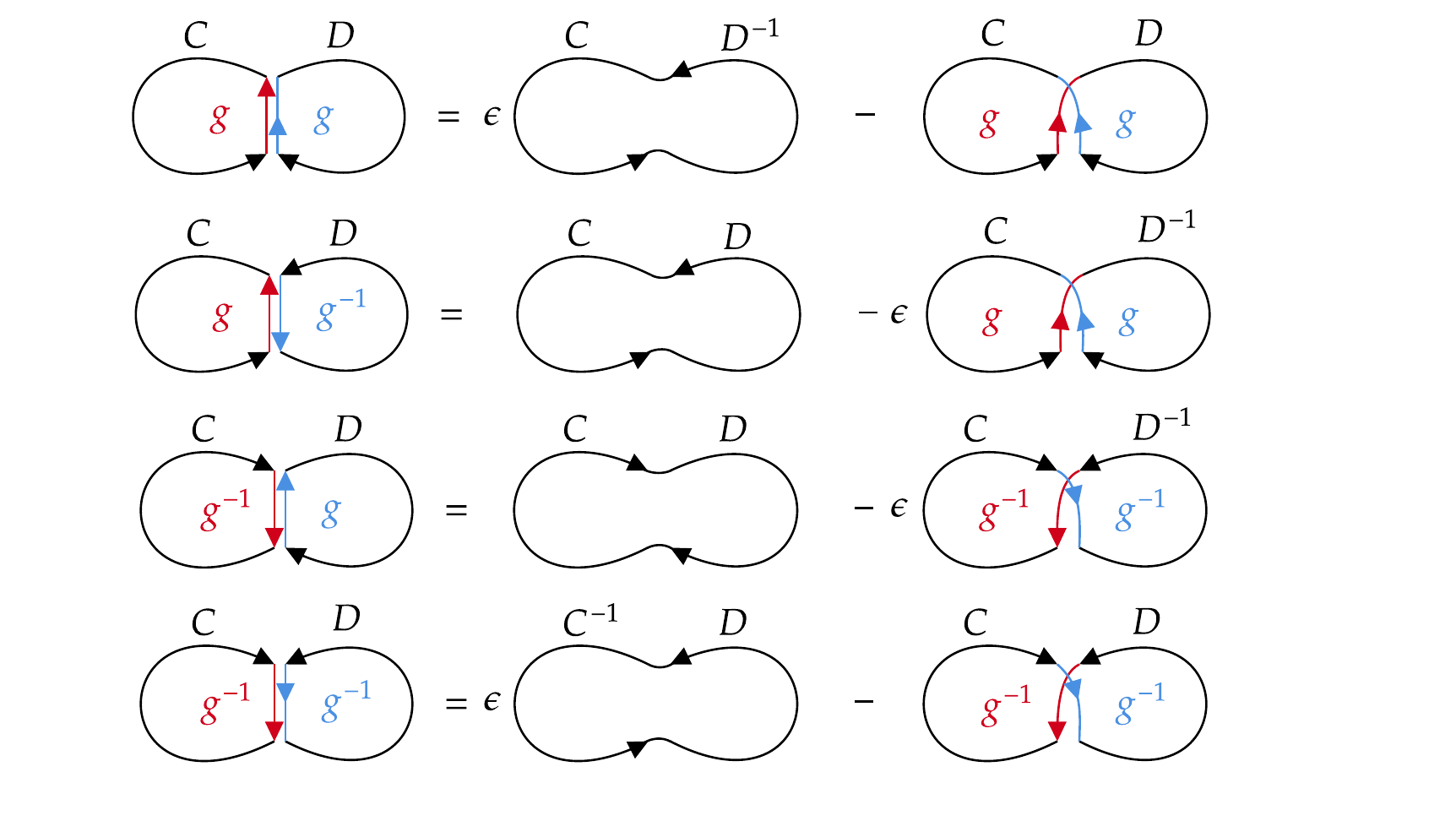}
  \caption{A visualization of the merging rules for the defining representations of $\SpGroup(N)$, $\SOGroup(N)$, and $\UGroup(N)$, see \eqref{eq:MergingRulesSO(N)} and~\eqref{eq:MergingRulesSp(N)}. The sets of rules only differ by the value of the scalar $\epsilon$. One sets $\epsilon = 1$ for $\SpGroup(N)$ and $\SOGroup(N)$, and $\epsilon = 0$ for $\UGroup(N)$. }
\end{figure}

\begin{figure}
\centering
  \includegraphics[scale=0.72]{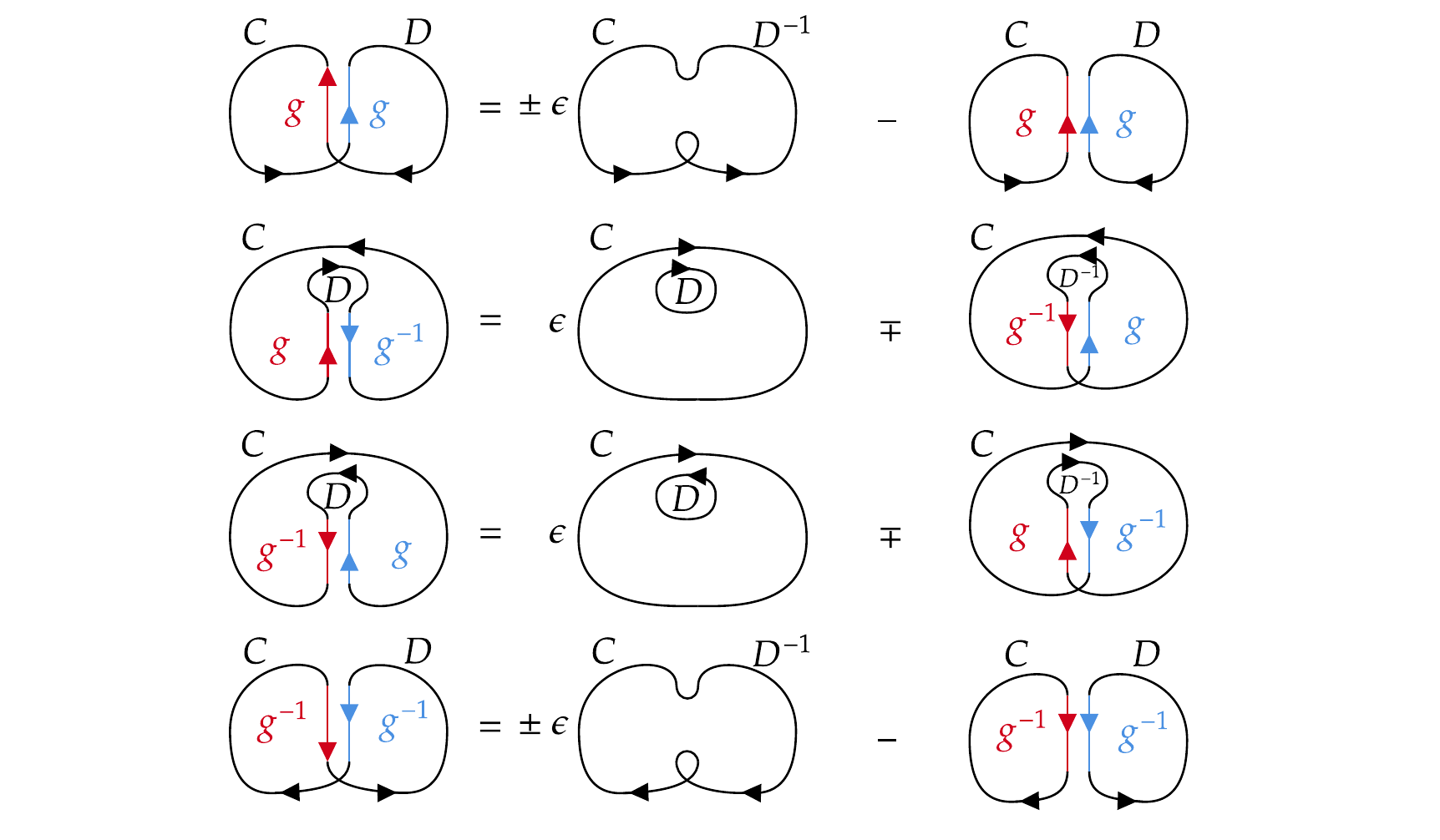}
\caption{A visualization of the twisting rules for the defining representations of $\SOGroup(N), \SpGroup(N)$, and $\UGroup(N)$, see \eqref{eq:TwistingRulesSO(N)} and \eqref{eq:TwistingRulesSp(N)}. The sets of rules only differ by the choice of signs for the $\pm$ and $\mp$, and the value of the scalar $\epsilon$. One chooses the upper signs for $\SOGroup(N)$ and the lower signs for $\SpGroup(N)$ and $\UGroup(N)$; further $\epsilon = 1$ for $\SOGroup(N)$ and $\SpGroup(N)$, and $\epsilon = 0$ for $\UGroup(N)$.}
\end{figure}


\begin{figure}
\centering
  \includegraphics[scale=0.72]{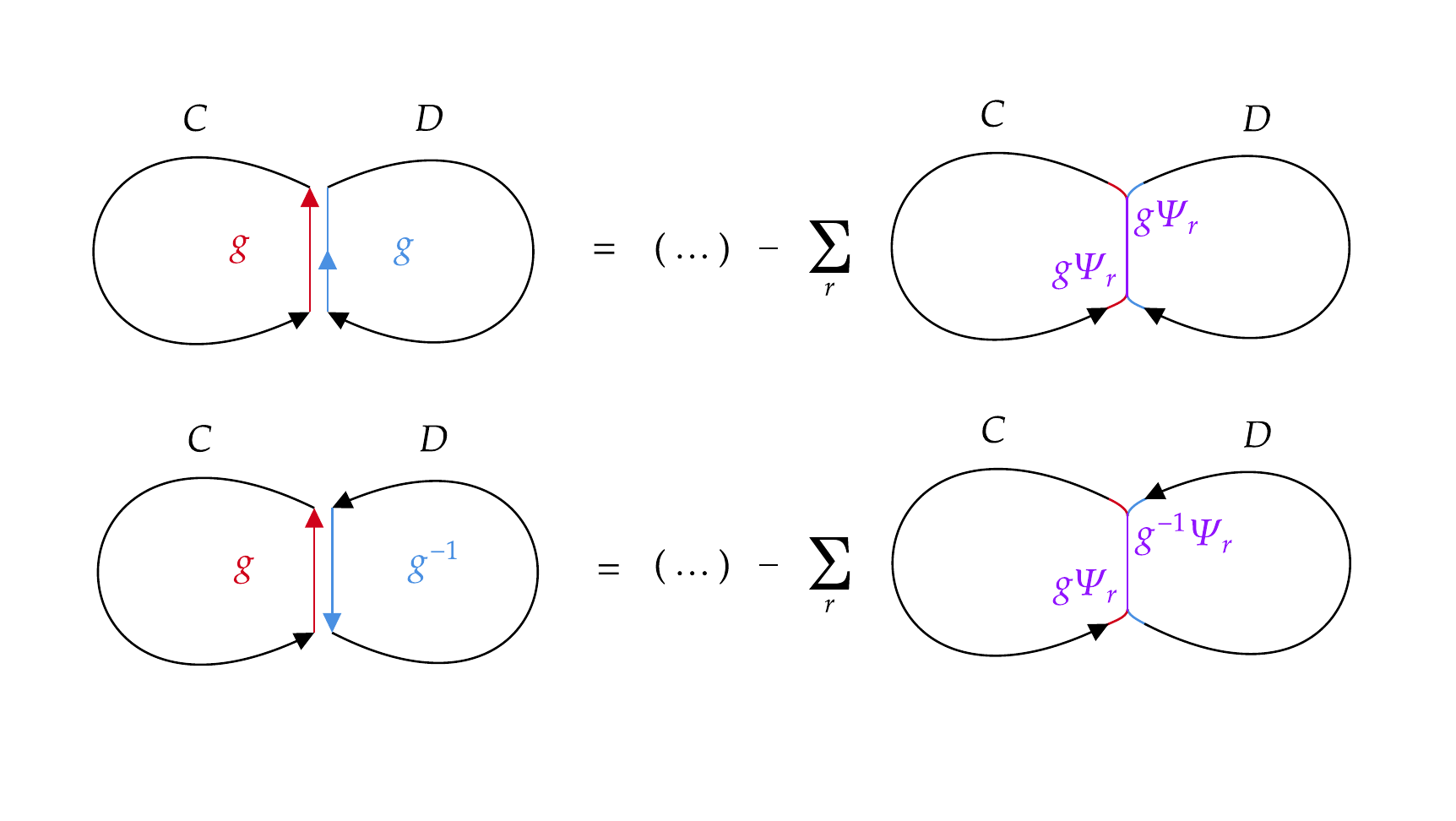}
\caption{A visualization of some merging rules for the 7-dimensional irreducible representation of $G_2$, see \eqref{eq:MergingRulesG_2}. The ellipses represent similar merging rules as the ones for $\SOGroup(N)$, but we also find terms which no longer are expressible as a simple linear combination of Wilson loops.}
\end{figure}

In this section, we explore how \cref{prop:theoremA} reproduces important Wilson loop formulas from \parencite{Chatterjee2019,Jafarov2016} for the groups $G = \SOGroup(N)$ and $\SUGroup(N)$ in a straightforward way, using basic representation-theoretic information rather than the lengthy, explicit calculations employed in the cited papers.
We also study equivalent Wilson loop formulas for other examples that, to the authors' knowledge, do not yet appear in the literature: the classical groups $\SpGroup(N)$ and $\UGroup(N)$, and the exceptional group $G_2$.

\subsection{Defining representation of \texorpdfstring{$\SOGroup(N)$}{SO(N)}}
\label{sec:examples:so}
Let us sketch how \cref{prop:theoremA} reproduces \cite[Thm 8.1]{Chatterjee2019} for $G = \SOGroup(N)$.
Consider the defining representation $\rho : \SOGroup(N) \to \mathbb{R}^{N \times N}$.
A basis $\{\xi^a\}$ of the associated Lie algebra $\mathfrak{so}(N) \subset \mathbb{R}^{N \times N}$ is given in terms of elementary antisymmetric matrices $\tfrac{1}{\sqrt{2}}\left(E_{ij} - E_{ji}\right)$ for $i \neq j$ (where $E_{ij} \in \mathbb{R}^{N \times N}$ with $(E_{ij})_{rs} = \delta_{ir} \delta_{js}$). 
The matrices $\xi^a$ constitute an orthonormal basis of $\mathfrak{so}(N)$ relative to the following inner product  
\begin{equation}
\kappa(X,Y) := -\tr(X \cdot Y) \quad \forall X,Y \in \mathfrak{so}(N),
\end{equation}
which is $-\tfrac{1}{N-2}$ times the Killing form.
A straightforward calculation gives the following completeness relation:
\begin{equation}
\label{eq:SO(N)CompletenessRelation}
\xi^a_{ij} \xi^a_{kl} =  \delta_{ik} \delta_{jl} - \delta_{il} \delta_{jk}.
\end{equation}
Given two Wilson loops $W_l(g) = \tr(C g^{\pm 1}), W_{l'}(g) = \tr(D g^{\pm 1})$ with $C,D \in G$. Then, depending on the given exponents, Equation~\eqref{eq:SO(N)CompletenessRelation} implies:
  \begin{equation}
  \label{eq:MergingRulesSO(N)}
    \mathcal{M}(W_l, W_{l'})(g)
      = \begin{cases}
        (+,+):  \quad \tr(C D^{-1}) - \tr(CgDg), \\
        (+,-):  \quad \tr(C D) - \tr(CgD^{-1}g), \\
        (-,+):  \quad \tr(C D) - \tr( C g^{-1} D^{-1} g^{-1}), \\
        (-,-):  \quad \tr(C^{-1} D) - \tr(C g^{-1} D g^{-1}), \\
      \end{cases}
  \end{equation}
This corresponds to a linear combination of what in \cite{Chatterjee2019} is called the negative and positive mergers $W_{l \ominus_{i,j} l'}, W_{l \oplus_{i,j} l'}$ of the loops $W_l, W_{l'}$.
In an analogous way, all other sums can be handled, and in their notation we find:
	\begin{equation}
		\langle d W_l, d W_{l'} \rangle
		=
		\sum_{\substack{j \in E(l),\\ j' \in E(l')}} \left( W_{l \ominus_{j,j'} l'} - W_{l \oplus_{j,j'} l'} \right).
	\end{equation}
Further, given a single-argument Wilson loop $W_l(g) = \tr(C g^{\pm 1} D g^{\pm 1})$ with $C,D \in G$. Then, depending on the exponents:
  \begin{equation}
  \label{eq:TwistingRulesSO(N)}
    \mathcal{T}(W_l)(g)
      = \begin{cases}
        (+,+):  \quad \tr(C D^{-1}) - \tr(Cg) \tr(Dg), \\
        (+,-):  \quad  -  \tr(g^{-1} C g D^{-1}) + \tr(C) \tr(D) , \\
        (-,+):  \quad  - \tr(C g^{-1} D^{-1} g) + \tr(C) \tr(D), \\
        (-,-):  \quad \tr(C D^{-1}) - \tr(g^{-1} C) \tr(g^{-1} D), \\
      \end{cases}
  \end{equation}	
Again, this corresponds to linear combinations of what in \cite{Chatterjee2019} is called \emph{twistings} $ W_{\propto_{j,j'} l}$ and \emph{splittings} $W_{\times_{j,j'}^1}, W_{\times_{j,j'}^2}$ of $W_l$. 
Recall that the Laplacian can be written as $\Delta = \xi^a \xi^a$. Together with the completeness relation~\eqref{eq:SO(N)CompletenessRelation} this implies:
	\begin{equation}
		\Delta g_{il} = 
		\sum_a \rho(\xi^a)_{ij} \rho(\xi^a)_{jk} g_{kl} = 
		\left(\delta_{ij}\delta_{jk} - \delta_{ik}\delta_{jj}\right)g_{kl} = 
		(1 - N) g_{il}.
	\end{equation} 
Note that by orthogonality, $(g^{-1})_{il} = g_{li}$ is itself just a generic matrix element, so the above equation also holds under the replacement $g \rightsquigarrow g^{-1}$. Thus, the Casimir eigenvalue $\lambda$ equals $(1-N)$.
In the defining representation of $\SOGroup(N)$, \cref{prop:theoremA} implies \cite[Thm 8.1]{Chatterjee2019} as a corollary, the only differences coming from the fact that we used integration by parts \emph{twice} rather than once in our derivation of \cref{prop:theoremA}.
\subsection{Defining representation of \texorpdfstring{$\SpGroup(N)$}{Sp(N)}}
The compact symplectic group \( \SpGroup(N) = \SpGroup(2N, \C) \intersect \UGroup(2N) \) consists of unitary \( 2N \times 2N \) matrices \( M \) satisfying \( M^\T J M = J \), where \( J = \smallMatrix{0 & I_{N} \\ - I_N & 0} \).
Elements of its Lie algebra \( \SpAlgebra(N) \) are block matrices of the form \( \iota(a, b) = \smallMatrix{a & b \\ - \bar{b} & \bar{a}} \) with \( a^* = -a \) and \( b^\T = b \).
Accordingly, a basis of \( \SpAlgebra(N) \) is given by the following elements
\begin{subequations}
  \begin{gather}
    \frac{1}{2} \iota\bigl(E_{ab} - E_{ba}, 0\bigr), \quad
    \frac{1}{2} \iota\bigl(\I E_{ab} + \I E_{ba}, 0\bigr), \quad
    \frac{1}{\sqrt{2}} \iota\bigl(\I E_{cc}, 0\bigr), \\
    \frac{1}{2} \iota\bigl(0, E_{ab} + E_{ba}\bigr), \quad
    \frac{1}{2} \iota\bigl(0, \I E_{ab} + \I E_{ba}\bigr), \quad
    \frac{1}{\sqrt{2}} \iota\bigl(0, E_{cc}\bigr), \quad
    \frac{1}{\sqrt{2}} \iota\bigl(0, \I E_{cc}\bigr)
  \end{gather}
\end{subequations}
where \( 1 \leq a < b \leq N \) and \( 1 \leq c \leq N \) and where the matrix \( E_{ab} \) is defined by \( (E_{ab})_{kl} = \delta_{ak} \delta_{bl} \) as before.
These elements form an orthonormal basis with respect to the inner product
\begin{equation}
  \kappa(X, Y) = -\tr(XY), \qquad X, Y \in \SpAlgebra(N),
\end{equation}
which is \( -\tfrac{1}{2N + 2} \) times the Killing form.
A direct calculation shows that the completeness relation for \( \SpAlgebra(N) \) reads
\begin{equation}
  K_{ijkl} = J_{ik} J_{jl} - \delta_{il} \delta_{jk},
\end{equation}
see also \parencite[Appendix~A]{Dahlqvist2017}.

Thus, the merging of the loops \( W_l(g) = \tr(C g^\sigma) \) and \( W_{l'}(g) = \tr(D g^\varsigma) \) with \( C, D \in \SpGroup(N) \) is given by
\begin{equation}
\label{eq:MergingRulesSp(N)}
  \mathcal{M}(W_l, W_{l'})(g)
    = \begin{cases}
      (+,+):  \quad \tr(C D^{-1}) - \tr(CgDg), \\
      (+,-):  \quad \tr(CD) - \tr(C g D^{-1} g), \\
      (-,+):  \quad \tr(CD) - \tr(C g^{-1} D^{-1} g^{-1}), \\
      (-,-):  \quad \tr(C^{-1} D) - \tr(Cg^{-1}Dg^{-1}), 
    \end{cases}
\end{equation}
depending on the signatures \( (\sigma, \varsigma) \).
Similarly, the twisting of the loop \( W_l(g) = \tr(C g^\sigma D g^\varsigma) \) with \( C, D \in \SpGroup(N) \) takes the form 
\begin{equation}
\label{eq:TwistingRulesSp(N)}
  \mathcal{T}(W_l)(g)
    = \begin{cases}
      (+,+):  \quad -\tr(C D^{-1}) - \tr(Cg) \tr(Dg), \\
      (+,-):  \quad \tr(Cg D^{-1}g^{-1}) + \tr(C) \tr(D), \\
      (-,+):  \quad \tr(Cg^{-1} D^{-1}g) + \tr(C) \tr(D), \\
      (-,-):  \quad -\tr(C D^{-1}) - \tr(Cg^{-1}) \tr(Dg^{-1}). 
    \end{cases}
\end{equation}
Comparing~\eqref{eq:MergingRulesSO(N)} and~\eqref{eq:MergingRulesSp(N)}, we find that the rules for merging of $\SOGroup(N)$ and $\SpGroup(N)$ are completely identical, whereas the twisting rules~\eqref{eq:TwistingRulesSO(N)} and~\eqref{eq:TwistingRulesSp(N)} are only \emph{almost} identical, differing by a single sign per equation.

Finally, we find the Casimir eigenvalue by the following calculation:
 \begin{equation}
 \Delta g_{il} = (J_{ij} J_{jk} - \delta_{ik} \delta_{jj}) g_{kl} = -(1 + 2N) g_{il}.
 \end{equation}

\subsection{Defining representation of \texorpdfstring{$\UGroup(N)$}{U(N)}}
\label{sec:examples:ugroup}
The Lie algebra of \( \UGroup(N) \) is the set of all skew-hermitian \( N \times N \)-matrices and a basis is given by the following elements:
\begin{equation}
	\set*{\tfrac{i}{\sqrt{2}}  \left( E^{ab} + E^{ba} \right)}_{a < b} 
	\cup
	\set*{ \tfrac{1}{\sqrt{2}}  \left( E^{ab} - E^{ba} \right)}_{a < b}
	\\
	\cup
	\set[\Big]{ i E^{aa}}_{1 \leq a \leq N}
\end{equation}
where $E_{ab} \in \mathbb{R}^{N \times N}$ with $(E_{ab})_{rs} = \delta_{ar} \delta_{bs}$.
The symmetric \( \AdAction \)-invariant bilinear form
\begin{equation}
	\kappa(X,Y) := - \tr(XY), \quad \forall X,Y \in \UAlgebra(N),
\end{equation}
is positive-definite because \( \kappa(X, X) = \sum_{ij} \abs{X_{ij}}^2 \).
Note that \( \kappa \) is not a scalar multiple of the Killing form of \( \UGroup(N) \) as the latter is degenerate.
The basis introduced above is orthonormal with respect to \( \kappa \).

A direct calculation shows that the completeness relation for \( \UAlgebra(N) \) reads
\begin{equation}
	K_{ijkl} = - \delta_{il} \delta_{jk}.
\end{equation}
According to~\eqref{eq:MergingRulesGenerally}, the merging of two Wilson loops $W_l(g) = \tr(C g^{\pm 1}), W_{l'}(g) = \tr(D g^{\pm 1})$ with $C,D \in \UGroup(N)$ is thus given by
\begin{equation}
\label{eq:MergingRulesU(N)}
\mathcal{M}(W_l, W_{l'})(g)
	= \begin{cases}
	(+,+):  \quad - \tr(CgDg), \\
	(+,-):  \quad + \tr(C D), \\
	(-,+):  \quad + \tr(C D), \\
	(-,-):  \quad - \tr(C g^{-1} D g^{-1}). \\
	\end{cases}
\end{equation}
Furthermore, by~\eqref{eq:TwistingRulesGenerally}, the twisting of a Wilson loop $W_l(g) = \tr(C g^{\pm 1} D g^{\pm 1})$ with $C,D \in \UGroup(N)$ is:
\begin{equation}
\label{eq:TwistingRulesU(N)}
\mathcal{T}(W_l)(g)
	= \begin{cases}
	(+,+):  \quad - \tr(Cg) \tr(Dg), \\
	(+,-):  \quad + \tr(C) \tr(D) , \\
	(-,+):  \quad + \tr(C) \tr(D), \\
	(-,-):  \quad - \tr(Cg^{-1}) \tr(Dg^{-1}). \\
	\end{cases}
\end{equation}
By~\eqref{eq:EigenvalueK-OperatorGeneral}, for the eigenvalue \( \lambda \) of the Laplace operator, we obtain
\begin{equation}
	\label{eq:EigenvalueK-OperatorU(N)}
	\lambda \delta_{ij} = K_{ikkj} = - N \delta_{ij}.
\end{equation}

We now discuss the definition of the unitary Weingarten map based on our general result \cref{prop:matrixCoefficients:tensorRepSchur}.
For this we first recall a few well-known facts concerning the representation theory of the symmetric group \( S_n \).
By, \eg, \parencite[Section~9.1]{GoodmanWallach2009} the isomorphism classes of irreducible representations \( G_\lambda \) of \( S_n \) are bijectively indexed by partitions \( \lambda \) of \( n \).
Moreover, the group algebra of \( S_n \) decomposes as a direct sum
\begin{equation}
	\label{eq:examples:ugroup:decompGroupAlgebra}
	\C S_n \isomorph \bigoplus_{\lambda \vdash n} \End(G_\lambda)
\end{equation}
of simple algebras, see \parencite[Section~9.3.2]{GoodmanWallach2009}.
Let \( p_\lambda \in \C S_n \) be the minimal central idempotent that under this isomorphism acts by the identity on \( G_\lambda \) and by zero on the other components.

Let \( V = \C^N \) be the fundamental representation of \( G = \UGroup(N) \).
Every permutation \( \sigma \in S_n \) acts on \( V^{\tensorProd n} \) by mapping \( v_1 \tensorProd \dotsb \tensorProd v_n \) to \( v_{\sigma^{-1}(1)} \tensorProd \dotsb \tensorProd v_{\sigma^{-1}(n)} \).
Let \( \tau: \C S_n \to \End(V^{\tensorProd n}) \) be the linear extension of this representation.
A moment's reflection convinces us that \( \tau(\sigma)^* = \tau(\sigma^{-1}) \). 
The Schur--Weyl theorem shows that the image of \( \tau \) coincides with the set of invariants \( \bigl(V^{\tensorProd n} \tensorProd (V^*)^{\tensorProd n}\bigr)^G \isomorph \End_G (V^{\tensorProd n}) \).
Thus we are in the position to apply \cref{prop:matrixCoefficients:tensorRepSchur}.
For this purpose, endow \( \C S_n \) with an inner product by declaring the basis \( \sigma \in S_n \) to be orthonormal.
Then a simple calculation shows that the adjoint of \( \tau \) with respect to the inner pairing \( \scalarProd{S_1}{S_2} = \tr(S_2^* S_1) \) on \( \End(V^{\tensorProd n}) \) is 
\begin{equation}
	\tau^* (S) = \sum_{\varsigma \in S_n} \tr \bigl(\tau(\varsigma^{-1}) S\bigr) \, \varsigma.
\end{equation}
According to \cref{rem:matrixCoefficients:weingartenByDiagonalizing} the Weingarten map can be obtained by diagonalizing the operator
\begin{equation}
	\tau^* \circ \tau (\sigma)
		= \sum_{\varsigma \in S_n} \tr \bigl(\tau(\varsigma^{-1}) \tau(\sigma)\bigr) \, \varsigma
		= \sum_{\varsigma \in S_n} \tr \bigl(\tau(\varsigma^{-1}\sigma)\bigr) \, \varsigma
		= \sigma \sum_{\varsigma \in S_n} \tr \bigl(\tau(\varsigma)\bigr) \, \varsigma \,.
\end{equation}
In fact, it turns out that \( \sum_{\varsigma \in S_n} \tr \bigl(\tau(\varsigma)\bigr) \, \varsigma = \sum_{\lambda \vdash n} k_\lambda \, p_\lambda \) for some constants \( k_\lambda \).
In other words, the decomposition~\eqref{eq:examples:ugroup:decompGroupAlgebra} is the eigenspace decomposition of \( \tau^* \circ \tau \) with \( \set{k_\lambda} \) as the associated eigenvalues.
The above identity can be established in two different ways:
\begin{itemize}
	\item	
		First, we can use Schur--Weyl duality again to express \( \varsigma \mapsto \tr\bigl(\tau(\varsigma)\bigr) \) in terms of the characters \( \chi_\lambda \) of the irreducible representation \( G_\lambda \) and the dimension of the associated Weyl module \( F^N_\lambda \) with highest weight \( \lambda \).
		Then the relation \( p_\lambda = \frac{\dim G_\lambda}{n!} \chi_\lambda \), see \parencite[Theorem~9.3.10]{GoodmanWallach2009}, yields
		\begin{equation}
			\sum_{\varsigma \in S_n} \tr \bigl(\tau(\varsigma)\bigr) \, \varsigma = n! \sum_{\substack{\lambda \vdash n \\ l(\lambda) \leq N}} \frac{\dim F^N_\lambda}{\dim G_\lambda} p_\lambda \,,
		\end{equation}
		where \( l(\lambda) \) is the number of parts of the partition \( \lambda \).
		This is the approach taken by \citeauthor{CollinsSniady2006}, \cf \parencite[Proposition~2.3.2]{CollinsSniady2006}.
	\item
		Secondly, since \( \tau(\varsigma) \) is a permutation matrix, \( \tr\bigl(\tau(\varsigma)\bigr) \) coincides with the dimension of the set of fixed points.
		Thus,
		\begin{equation}
			\sum_{\varsigma \in S_n} \tr \bigl(\tau(\varsigma)\bigr) \, \varsigma = \sum_{\varsigma \in S_n} N^{\sharp \varsigma} \, \varsigma,
		\end{equation} 
		where \( \sharp \varsigma \) is the number of cycles of \( \varsigma \).
		This equality can be further simplified by using the Jucys--Murphy elements \( X_k \).
		In fact, \( \sum_{\varsigma \in S_n} N^{\sharp \varsigma} \, \varsigma = \prod_{k=1}^n (N + X_k) \).
		Now using the fact the Gelfand--Tsetlin vectors indexed by standard Young tableaus are joint eigenvectors of the Jucys--Murphy elements one gets
		\begin{equation}
			\sum_{\varsigma \in S_n} \tr \bigl(\tau(\varsigma)\bigr) \, \varsigma = \sum_{\substack{\lambda \vdash n \\ l(\lambda) \leq N}} \prod_{(i,j) \in \lambda} (N + j - i) \, p_\lambda \,.
		\end{equation}
		Equality of the eigenvalues with the above description follows from the Hook length formula which gives an expression for the dimensions of \( F^N_\lambda \) and \( G_\lambda \).
		Following this route leads to the relation of the Weingarten map with the Jucys--Murphy elements discovered in \parencite[Theorem~1.1]{Novak2010} and \parencite[Proposition~2]{ZinnJustin2010}.
\end{itemize} 

Thus, in summary, \cref{prop:matrixCoefficients:tensorRepSchur} in combination with \cref{rem:matrixCoefficients:weingartenByDiagonalizing} recovers \parencites[Theorem~2.1]{Collins2002}[Corollary~2.4]{CollinsSniady2006} in the following form.

\begin{theorem}
	\label{prop:examples:ugroup:weingarten}
	Let $\rho: \UGroup(N) \to \C^N$ be the fundamental representation of \( \UGroup(N) \).
	For non-negative integers \( n, n' \), define
	\begin{equation}
		T^{n, n'}(S) = \int_G \rho^{\tensorProd n}(g) \circ S \circ \rho^{\tensorProd n'}(g) \dif g 
	\end{equation}
	for \( S \in \Hom\bigl((\C^N)^{\tensorProd n'}, (\C^N)^{\tensorProd n}\bigr) \).
	Then \( T^{n, n'} = 0 \) if \( n \neq n' \), and otherwise
	\begin{equation}
		T^{n, n} = \tau \circ \mathrm{Wg} \circ \tau^*,
	\end{equation}
	where \( \tau \) and \( \tau^* \) are defined above and \( \mathrm{Wg}: \C S_n \to \C S_n \) is given by
	\begin{equation}
		\mathrm{Wg}(\sigma) = \sum_{\substack{\lambda \vdash n \\ l(\lambda) \leq N}} \prod_{(i,j) \in \lambda} (N + j - i)^{-1} \, p_\lambda \,.
	\end{equation}
\end{theorem}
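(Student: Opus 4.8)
The plan is to read the statement off from \cref{prop:matrixCoefficients:tensorRepSchur} together with the representation-theoretic facts assembled above. The vanishing $T^{n,n'}=0$ for $n\neq n'$ needs no work beyond \cref{prop:matrixCoefficients:haar}: under the identification of $(\C^N)^{\tensorProd n}\tensorProd\bigl((\C^N)^*\bigr)^{\tensorProd n'}$ with $\Hom\bigl((\C^N)^{\tensorProd n'},(\C^N)^{\tensorProd n}\bigr)$, the operator $T^{n,n'}$ is the orthogonal projection onto the space of $\UGroup(N)$-invariants, and that space is $\set{0}$ whenever $n\neq n'$ because the central circle $\set{z I : \abs z=1}\subset\UGroup(N)$ acts on it through the character $z\mapsto z^{\,n-n'}$. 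So from now on I would take $n=n'$.

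For $n=n'$, the Schur--Weyl theorem tells us that the image of $\tau\colon\C S_n\to\End\bigl((\C^N)^{\tensorProd n}\bigr)$ is precisely $\End_{\UGroup(N)}\bigl((\C^N)^{\tensorProd n}\bigr)\isomorph\bigl((\C^N)^{\tensorProd n}\tensorProd((\C^N)^*)^{\tensorProd n}\bigr)^{\UGroup(N)}$, and the fundamental representation is unitary; hence \cref{prop:matrixCoefficients:tensorRepSchur} applies with $\mathcal A=\C S_n$ equipped with the inner product making $\set{\sigma}$ orthonormal, and gives $T^{n,n}=\tau\circ\mathrm{Wg}\circ\tau^*$ with $\mathrm{Wg}$ the Moore--Penrose pseudoinverse of $\tau^*\circ\tau$ (\cref{rem:matrixCoefficients:weingartenByDiagonalizing}). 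Since $\tau(\sigma)$ is a permutation matrix one has $\tau(\sigma)^*=\tau(\sigma^{-1})$, so $\tau^*(S)=\sum_{\varsigma}\tr\bigl(\tau(\varsigma^{-1})S\bigr)\varsigma$ and therefore $\tau^*\circ\tau$ is multiplication by the central element $z=\sum_{\varsigma\in S_n}\tr\bigl(\tau(\varsigma)\bigr)\varsigma=\sum_{\varsigma\in S_n}N^{\sharp\varsigma}\varsigma$, where $\sharp\varsigma$ denotes the number of cycles of $\varsigma$.

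It remains to diagonalize $z$ along the decomposition~\eqref{eq:examples:ugroup:decompGroupAlgebra}, and this is the one genuinely substantial point; fortunately it is classical and has already been spelled out above in two ways. Either one uses the identity $\sum_{\varsigma}N^{\sharp\varsigma}\varsigma=\prod_{k=1}^n(N+X_k)$ for the Jucys--Murphy elements $X_k$ together with the fact that the Gelfand--Tsetlin vectors indexed by standard Young tableaux are joint eigenvectors of the $X_k$ with eigenvalue the content of the box carrying $k$, or one invokes Schur--Weyl duality together with $p_\lambda=\tfrac{\dim G_\lambda}{n!}\chi_\lambda$ and the hook length formula. Either route yields $z=\sum_{\lambda\vdash n}k_\lambda\,p_\lambda$ with $k_\lambda=\prod_{(i,j)\in\lambda}(N+j-i)$, and $k_\lambda$ vanishes exactly when $l(\lambda)>N$, since then the box $(N+1,1)$ has content $0$ (while $k_\lambda\geq 1$ otherwise). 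Because the $p_\lambda$ are mutually orthogonal self-adjoint idempotents for the chosen inner product (the characters of $S_n$ being real) and $z$ is self-adjoint, the pseudoinverse of multiplication by $z$ is multiplication by $\sum_{\lambda\vdash n,\ l(\lambda)\leq N}\bigl(\prod_{(i,j)\in\lambda}(N+j-i)\bigr)^{-1}p_\lambda$, which is the asserted $\mathrm{Wg}$; plugging this into $T^{n,n}=\tau\circ\mathrm{Wg}\circ\tau^*$ completes the proof. The main obstacle is thus not any single calculation but keeping the pseudoinverse bookkeeping — self-adjointness, and the support condition $l(\lambda)\leq N$ — consistent with the identifications used in \cref{prop:matrixCoefficients:tensorRepSchur}.
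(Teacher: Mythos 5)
Your proposal is correct and follows essentially the same route as the paper: apply \cref{prop:matrixCoefficients:tensorRepSchur} with \( \mathcal{A} = \C S_n \) and the Schur--Weyl map \( \tau \), identify \( \tau^* \circ \tau \) with multiplication by \( \sum_\varsigma N^{\sharp\varsigma}\varsigma \), and diagonalize it along \( \bigoplus_\lambda \End(G_\lambda) \) (via Jucys--Murphy elements or characters plus the hook length formula) so that the pseudoinverse is the stated \( \mathrm{Wg} \). Only a wording slip: for \( l(\lambda) > N \) the box \( (N+1,1) \) has content \( -N \), so it is the factor \( N + j - i \), not the content, that vanishes.
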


\subsection{Defining representation of \texorpdfstring{$\SUGroup(N)$}{SU(N)}}
In \parencite{Jafarov2016}, Wilson loop identities of the shape of \cref{prop:theoremA} are derived for $\SUGroup(N)$, using Stein's method and many technical, auxiliary lemmas. In comparison, we will see that our framework allows us to drastically reduce the amount of necessary calculation needed to arrive at the same conclusion.
\\
Consider $\SUGroup(N)$, with the Lie algebra $\mathfrak{su}(N)$ of skew-hermitian matrices with vanishing trace, and inner product
\begin{equation}
\kappa(X,Y) := -\tr(XY) \quad \forall X,Y \in \mathfrak{su}(N),
\end{equation} 
which is a renormalization of the Killing form by a factor \( -\tfrac{1}{2N} \).
By \parencite{BertlmannKrammer2008}, an orthonormal basis $\{\xi^a\}$ is given by
	\begin{equation}
	\begin{aligned}
		\left \{ \tfrac{i}{\sqrt{2}}  \left( E^{jk} + E^{kj} \right) 
		\right \}_{j < k} 
		&\cup
		\left \{ \tfrac{1}{\sqrt{2}}  \left( E^{jk} - E^{kj} \right) 
		\right \}_{j < k}
		\\
		&\cup
		\left\{ 
			\tfrac{i}{\sqrt{2 l(l+1)}} \left( \sum_{j = 1}^l (E^{jj} - E^{l+1, l+1} \right)  
		\right\}_{1 \leq l \leq N - 1}.
	\end{aligned}
	\end{equation}
The corresponding completeness relation turns out to be
	\begin{equation}
		\xi^a_{ij} \xi^{a}_{kl} = -\delta_{il} \delta_{jk} + \frac{1}{N} \delta_{ij} \delta_{kl}.
	\end{equation}
Again, the merging of two loops $W_l(g) = \tr(Cg^{\pm 1})$ and $W_{l'} = \tr(D g^{\pm 1})$ turns out to be, depending on the exponents:
  \begin{equation}
    \mathcal{M}(W_l, W_{l'})(g)
      = \begin{cases}
        (+,+):  \quad - \tr(CgDg) + \tfrac{1}{N} \tr(Cg) \tr(Dg), \\
        (+,-):  \quad + \tr(C D) - \tfrac{1}{N} \tr(C g) \tr(Dg^{-1}), \\
        (-,+):  \quad + \tr(C D) - \tfrac{1}{N} \tr(C g^{-1} ) \tr(Dg), \\
        (-,-):  \quad - \tr(g^{-1} C g^{-1} D) + \tfrac{1}{N} \tr(C g^{-1}) \tr(D g^{-1} ). \\
      \end{cases}
  \end{equation}
These expressions are linear combinations of what is in \cite{Jafarov2016} called \emph{positive mergers} $W_{l \oplus_{j,j'} l'}$, \emph{negative mergers} $W_{l \ominus_{j,j'} l'}$ and the product of the unchanged loops $W_l \cdot W_{l'}$.

Similarly, the twisting of a loop $W_l(g) = \tr(C g^{\pm 1} D g^{\pm 1})$ yields:
  \begin{equation}
    \mathcal{T}(W_l)(g)
      = \begin{cases}
        (+,+): \quad  -\tr(Cg) \tr(Dg) + \tfrac{1}{N} \tr(Cg Dg), \\
        (+,-): \quad  +\tr(C) \tr(D) - \tfrac{1}{N} \tr(C g Dg^{-1}), \\
        (-,+): \quad  + \tr(C) \tr(D) - \tfrac{1}{N} \tr(C g^{-1} Dg), \\
        (-,-): \quad  -\tr(g^{-1} C) \tr( g^{-1} D) + \tfrac{1}{N} \tr(g^{-1} C g^{-1} D). \\
      \end{cases}
  \end{equation}

Similarly, these expressions are linear combinations of what Jafarov calls the \emph{splitting} $W_{\times_{j,j'}^1} W_{\times_{j,j'}^2}$ and the original unchanged loop $W_l$. 

Finally, the Casimir eigenvalue $\lambda$ is gotten via

	\begin{equation}
		\lambda g_{ij} \stackrel{!}{=} \Delta g_{ij} 
		= 
		\left(-\delta_{ik} \delta_{ll} + \frac{1}{N} \delta_{ik} \right) g_{kj} 
		= 
		\left(-N + \frac{1}{N} \right) g_{ij},
	\end{equation}

Inserting this information into \cref{prop:theoremA} reproduces a version of \cite[Thm 8.1]{Jafarov2016}, the only differences coming from the fact that we used integration by parts \emph{twice} rather than once in our derivation of \cref{prop:theoremA}.

\subsection{Irreducible 7-dimensional representation of \texorpdfstring{$G_2$}{G2}}
\label{sec:examples:g2group}
In all previous examples, the merging and twisting of two Wilson loops were polynomials of Wilson loops again. This is not true in full generality, as we will demonstrate by considering the exceptional, real, compact Lie group $G_2$.
\\
This Lie group is of dimension 14, and its smallest nontrivial irreducible representation $(\rho , V)$ is of dimension 7. We cite from \cite{Schwarz1988} a construction of this representation based on the octonions $\mathbb{O}$.
Recall the 8-dimensional, real, non-associative division algebra given by the octonions $\mathbb{O}$. With respect to the standard basis $\{e_i\}_{i=0}^7$ of $\mathbb{O}$, the multiplication is specified by 
\begin{align}
e_i e_j
= 
\begin{cases}
e_i &\text{ if } j = 0, \\
e_j &\text{ if } i = 0, \\
- \delta_{ij} e_0 + \psi_{ijk} e_k &\text{ else,} \\
\end{cases}
\end{align}
where $\{\psi_{ijk}\}_{i,j,k \in \{1,\dots,7\}}$ denotes a totally antisymmetric symbol, assuming the value 1 on the ordered triples
\begin{equation}
(i,j,k) = 
(1,2,3), \,
(1, 4, 7), \,
(1, 6, 5), \,
(2, 4, 6), \,
(2, 5, 7), \,
(3, 5, 4), \,
(3, 6, 7),
\end{equation}
and zero on all $(i,j,k)$ which do not arise from the above triples by permutation. This algebra admits a linear involution by extension of
\begin{align}
\overline{e_i} = 
\begin{cases}
e_0 &\text{ if } i = 0, \\
-e_i &\text{ if } i > 0,
\end{cases}
\end{align}
and a linear tracial map $\tau_{\mathbb{O}} : \mathbb{O} \to \mathbb{R}$ by extension of
\begin{align}
\tau_{\mathbb{O}}(e_i) := \delta_{i0}.
\end{align}
Now $G_2$ is the Lie group of algebra automorphisms of $\mathbb{O}$. Set $V := \ker \tau_{\mathbb{O}}$, then $G_2$ acts irreducibly and unitarily on $V$ with respect to the inner product
\begin{align*}
B(x,y) := \tau_{\mathbb{O}}(\overline{x}y) \quad \forall x ,y \in V.
\end{align*}
In \cite{Macfarlane2001}, an explicit basis $\{H^a\}_{a = 1,\dots, 14}$ of $\mathfrak{g}_2 := \text{Lie}(G_2)$ as a subalgebra of $\mathfrak{su}(7)$ is constructed, fulfilling the following\footnote{Our choice of generators $\{H^a\}$ differs from the ones in \cite{Macfarlane2001} by a factor of $i / \sqrt{2}$, in order to achieve orthonormality and since we need to view $\mathfrak{su}(7)$ as a Lie algebra of \emph{skew-hermitian} rather than hermitian matrices to achieve the right behaviour under exponentiation.}:
\begin{equation}
\tr(H^a H^b) = \delta_{ab}, \quad (\overline{H^a})^T = -H^a, \quad \forall a,b \in \{1,\dots,14\}.
\end{equation}
These generators constitute an orthonormal basis with respect to the inner product
\begin{equation}
\kappa(X,Y) := \tr(\overline{X}^T Y).
\end{equation}
By \cite{Macfarlane2001}, the completeness relation of $\mathfrak{g}_2$ is then equal to
\begin{equation}
(H^a)_{ij} (H^a)_{kl} = 
\frac{1}{2} 
\left(
\delta_{ik} \delta_{jl}
- 
\delta_{il} \delta_{jk}
\right)
- 
\frac{1}{6} \psi_{rij} \psi_{rkl}.
\end{equation}
In the basis $\{e_i\}_{i \in \{1,\dots,7\}}$ of $V$, define for all $r \in \{ 1,\dots,7 \}$ the endomorphism $\Psi_r := (\psi_{rij})_{i,j \in \{ 1,\dots, 7 \}} \in \End(V)$. Then, the merging of two Wilson loops $W_l(g) = \tr(C g^{\pm 1}), W_{l'}(g) = \tr(D g^{\pm 1})$ with $C,D \in G_2$ can be expressed as
  \begin{equation}
  \label{eq:MergingRulesG_2}
  \begin{aligned}
    &\mathcal{M}(W_l, W_{l'})(g) =
    \\
      &\begin{cases}
        (+,+):  \; \frac{1}{2} \left( \tr(C D^{-1}) - \tr(CgDg)\right) 
        - \tfrac{1}{6} \tr(Cg \Psi_r) \tr(Dg \Psi_r) , \\
        (+,-):  \; \frac{1}{2} \left(\tr(C D) - \tr(CgD^{-1}g)\right) 
        - \tfrac{1}{6} \tr(Cg \Psi_r) \tr( D \Psi_r g^{-1}), \\
        (-,+):  \; \frac{1}{2} \left(\tr(C D) - \tr( C g^{-1} D^{-1} g^{-1})\right) 
        - \tfrac{1}{6} \tr(C\Psi_r g^{-1}) \tr(Dg \Psi_r), \\
        (-,-):  \; \frac{1}{2} \left(\tr(C^{-1} D) - \tr(C g^{-1} D g^{-1})\right) 
        - \tfrac{1}{6} \tr(C\Psi_r g^{-1}) \tr(D \Psi_r g^{-1}).
      \end{cases} 
  \end{aligned}
  \end{equation}
  Here and in the following, summation over the common index $r$ is understood.
\\
The twisting of $W_l(g) = \tr(C g^{\pm 1} D g^{\pm 1})$ with $C,D \in G_2$ is given by
  \begin{equation}
  \begin{aligned}
  \label{eq:TwistingRulesG_2}
    &\mathcal{T}(W_l)(g) =
    \\
      & \begin{cases}
        (+,+):  \; \frac{1}{2} \left( \tr(C D^{-1}) - \tr(Cg) \tr(Dg) \right)
        - \frac{1}{6} \tr(C g \Psi_r ) \tr(D g \Psi_r), \\
        (+,-):  \;  \frac{1}{2} \left( -  \tr(g^{-1} C g D^{-1}) + \tr(C) \tr(D)  \right)
        - \frac{1}{6} \tr(C g \Psi_r ) \tr(D \Psi_r g^{-1}) , \\
        (-,+):  \;  \frac{1}{2} \left( - \tr(C g^{-1} D^{-1} g) + \tr(C) \tr(D)  \right)
        - \frac{1}{6} \tr(C \Psi_r g^{-1}) \tr(D g \Psi_r), \\
        (-,-):  \; \frac{1}{2} \left( \tr(C D^{-1}) - \tr(g^{-1} C) \tr(g^{-1} D) \right) 
        - \frac{1}{6} \tr(C \Psi_r g^{-1}) \tr(D \Psi_r g^{-1}), \\
      \end{cases}
  \end{aligned}	
  \end{equation}
It seems unlikely that the expressions involving the matrices $\Psi_r$ can be simplified any further, and as such, we do not have a polynomial of Wilson loops, but only of generalized Wilson loops in the sense of \cref{def:GeneralizedWilsonLoops}.
\\
We can also apply \cref{prop:matrixCoefficients:tensorRepSchur} and \cref{rem:matrixCoefficients:weingartenByDiagonalizing} to $G_2$ to calculate certain Weingarten functions: Denote by $G_2^\C$ the complex Lie group given by the automorphisms of the complexified octonion algebra. It also assumes an irreducible, unitary representation on the complexification $V_\C = (\ker \tau_{\mathbb{O}})_\C$ and in \cite{Schwarz1988}, generators for invariant spaces $(V_\C ^{\otimes n} \otimes_\C (V_\C^*)^{\otimes n'})^{G_2^\C}$ have been calculated. Due to connectedness of $G_2^\C$, we have for every $G_2^\C$-module $M$
\begin{align*}
M^{G_2^\C}
=
M^{\mathrm{Lie}(G_2^\C)}
=
M^{(\mathfrak{g}_2)_\C},
\end{align*}
denoting by $(\mathfrak{g}_2)_\C$ the complexification of the Lie algebra $\mathfrak{g}_2$. As such, we can deduce from this the $G_2$ invariants $(V ^{\otimes n} \otimes (V^*)^{\otimes n'})^{G_2} = (V ^{\otimes n} \otimes (V^*)^{\otimes n'})^{\mathfrak{g}_2}$.
\\
The representation space $V$ admits an invariant, non-degenerate bilinear form
\begin{align*}
\alpha : V \otimes V \to \mathbb{R}, \quad e_i \otimes e_j \mapsto -\tau_ {\mathbb{O}}(e_i e_j),
\end{align*}
hence $V \cong V^*$ as $G_2$-modules and one may restrict to the invariants $(V^{\otimes n})^{G_2}$ without loss of generality.
\\
For simplicity, let us study the case $n = 2$. An analysis of $n > 2$ is possible, but finding a basis of $(V^{\otimes n})^{G_2}$ becomes more difficult due to the presence of nontrivial relations between the generators determined in \cite{Schwarz1988}.
Now, $(V \otimes V)^{G_2}$ is one-dimensional and is generated by $u := \sum_{i=1}^7 e_i \otimes e_i$. This element is dual to $\alpha \in V^* \otimes V^*$ due to the relation $\tau_\mathbb{O}(e_i e_j) = -\delta_{ij}$ for $1 \leq i,j \leq 7$.
We set $\mathcal{A} = \mathbb{R} u \subset V \otimes V$, equip this subspace with the inner product $\langle u, u \rangle = 1$, and define $\tau : \mathcal{A} \to V \otimes V$ to be the embedding of this subspace.
The adjoint $\tau^*$ with respect to the scalar product on $V \otimes V$ defined in \cref{prop:matrixCoefficients:tensorRepSchur} is given by
\begin{align*}
\tau^* (e_i \otimes e_j) = \delta_{ij} u, \quad \forall 1 \leq i,j \leq 7,
\end{align*}
so $(\tau^* \tau) (u) = 7 u$.
Hence, by \cref{rem:matrixCoefficients:weingartenByDiagonalizing}, we find 
\begin{equation}
	\mathrm{Wg} = \frac{1}{7} \id_{\mathcal{A}}.
\end{equation}
Now, \cref{prop:matrixCoefficients:tensorRepSchur} allows us to deduce that for all $1 \leq i, j \leq 7$ we have
\begin{align*}
\int_{G_2} \rho^{\otimes 2}(g) (e_i \otimes e_j) \dif g 
=
(\tau \circ \mathrm{Wg} \circ \tau^*) (e_i \otimes e_j) =
\frac{1}{7} \delta_{ij} \sum_{k=1}^7 e_k \otimes e_k
\end{align*}
or, as a scalar integral,
\begin{align*}
\int_{G_2} 
\rho(g)_{i_1 j_1}
\rho(g)_{i_2 j_2}
\dif g
=
\frac{1}{7}
\delta_{i_1 i_2}
\delta_{j_1 j_2}
.
\end{align*}

\begin{refcontext}[sorting=nyt]{}
  \printbibliography
\end{refcontext}
\end{document}